\documentclass[final,3p,times]{elsarticle}

\usepackage{amssymb}
\usepackage{amsmath}
\usepackage[utf8]{inputenc}
\usepackage[T1]{fontenc}
\usepackage{amsmath, amsthm, mathtools, amssymb}
\usepackage{algorithm}
\usepackage{algpseudocode}
\usepackage{mathrsfs}
\usepackage{makecell}
\usepackage{shuffle}
\usepackage{bbm}
\usepackage{subcaption}
\usepackage[hidelinks]{hyperref}

\usepackage{csquotes}
\usepackage{nomencl}
\usepackage{enumitem}
\usepackage{xcolor}

\usepackage{amsmath}
\usepackage{mathtools}
\usepackage{amsfonts}
\usepackage{amsthm}

\newtheorem{theorem}{Theorem}[section]
\newtheorem{remark}[theorem]{Remark}
\newtheorem{proposition}[theorem]{Proposition}
\newtheorem{lemma}[theorem]{Lemma}
\newtheorem{corollary}[theorem]{Corollary}
\newtheorem{assumption}[theorem]{Assumption}

\newtheorem{example}[theorem]{Example}
\usepackage{graphicx}

\newcommand{\norm}[1]{\lVert#1\rVert}

\newcommand{\scalar}[1]{\langle #1 \rangle}

\makeatletter
\renewcommand\subsection{\@startsection{subsection}{2}{\z@}%
    {20\p@ \@plus 6\p@ \@minus 3\p@}%
    {10\p@ \@plus 3\p@ \@minus 2\p@}%
    {\normalfont\normalsize\itshape}}
\makeatother

\numberwithin{equation}{section}

\journal{Stochastic Processes and their Applications}

\begin{document}

\begin{frontmatter}



\title{Hölder regularity for backward stochastic Volterra integral equations and applications to numerical schemes}

\author[inst1]{Pere Diaz-Lozano\corref{cor1}\fnref{funding}}
\cortext[cor1]{Corresponding author. Email address: \texttt{peredl@math.uio.no}}

\author[inst1,inst2]{Giulia Di Nunno\fnref{funding}}

\fntext[funding]{This work was carried out within the SURE-AI Centre, funded by the Research Council of Norway under grant No.~357482.}

\affiliation[inst1]{
    organization={Department of Mathematics, University of Oslo},
    addressline={P.O. Box 1053 Blindern},
    city={Oslo},
    postcode={0316},
    country={Norway}
}

\affiliation[inst2]{
    organization={Department of Business and Management Science, NHH Norwegian School of Economics},
    addressline={Helleveien 30},
    city={Bergen},
    postcode={5045},
    country={Norway}
}

\begin{abstract}
We prove a Hölder-type regularity estimate for the martingale integrand of a backward stochastic Volterra integral equation (BSVIE). The estimate is formulated in $L^p(\Omega)$ after averaging in $L^2$ over the first time variable, and gives an averaged Hölder estimate of order $1/2$ in the second time variable. Our approach is based on the approximation of the BSVIE by a system of BSDEs. For this system, we establish a uniform regularity estimate for the martingale component using Malliavin calculus, and then pass to the limit to obtain the result for the BSVIE. We allow for a general Malliavin differentiable free term and generator. In particular, neither is assumed to come from a forward stochastic differential equation or to have a specific functional form. We also propose an explicit Euler scheme for the approximating BSDE system and show that the regularity estimate yields a convergence rate for the resulting discrete approximation of the BSVIE.
\end{abstract}

\begin{keyword}
Backward stochastic Volterra integral equation \sep Malliavin calculus \sep Hölder regularity for BSVIEs \sep Euler scheme for BSVIEs \sep BSDE approximations.



\MSC[2020] 60H20 \sep 60H07 \sep 65C30

\end{keyword}

\end{frontmatter}

\section{Introduction}

In this paper, we study regularity properties of the martingale integrand in backward stochastic Volterra integral equations (BSVIEs) of the form
\begin{flalign}\label{eq:bsvie-intro}
    Y(t)
    =
    \Psi(t)
    +
    \int_t^T G\big(t,s,Y(s),Z(t,s)\big)\,ds
    -
    \int_t^T Z(t,s)\,dB(s),
    \qquad t\in[0,T].
\end{flalign}
Here, $B$ is a Brownian motion defined on a complete probability space $(\Omega,\mathcal F,\mathbb P)$, and $\mathbb F=(\mathcal F_t)_{t\in[0,T]}$ is the usual augmentation of its natural filtration. The free term $\Psi$ is a process such that $\Psi(t)$ is $\mathcal F_T$-measurable for each $t\in[0,T]$, and need not be adapted. The generator $G$ may be random. The pair $(\Psi,G)$ will be referred to as the data of the BSVIE. Equations of this type are referred to as Type-I BSVIEs in the literature; see \cite{YONG2006779}.

BSVIEs may be viewed as an extension of backward stochastic differential equations (BSDEs). Indeed, a BSDE has the form
\begin{flalign}\label{eq:bsde-intro}
    Y(t)
    =
    \xi
    +
    \int_t^T f(s,Y(s),Z(s))\,ds
    -
    \int_t^T Z(s)\,dB(s),
    \qquad t\in[0,T],
\end{flalign}
where the terminal condition $\xi$ is a fixed $\mathcal{F}_T$-measurable random variable and the generator depends only on the running time variable. In contrast, in a BSVIE both the free term and the generator depend on the initial time $t$, and the martingale integrand becomes a two-parameter object $Z(t,s)$.

BSDEs were introduced by \cite{BismutJean_Michel1973Ccfi} in the case of a linear generator, and were later generalized by \cite{PARDOUX199055} to the nonlinear case. Since then, they have found applications in stochastic control and mathematical finance. In stochastic control, BSDEs arise as adjoint equations in stochastic maximum principles and are closely related to the value function; see, for example, \cite{Peng1990,Pham2009,Touzi2013}. They are also connected to semi-linear parabolic partial differential equations through nonlinear Feynman--Kac formulas; see \cite{PardouxPeng1992}. In finance, BSDEs are used for the pricing and hedging of financial products \cite{ElKarouiN.1997BSDE}, and also provide a natural framework for nonlinear expectations and dynamic risk measures \cite{Peng1997,RosazzaGianinEmanuela2006Rmvg}.

In contrast, BSVIEs of the form \eqref{eq:bsvie-intro} were first introduced in \cite{Lin21032002}, in the case where the free term was constant in time, namely $\Psi(t)\equiv \xi$ for some $\mathcal F_T$-measurable random variable $\xi$. The theory was later extended in \cite{YONG2006779} to allow for genuinely time-dependent free terms, as well as more general Type-II equations, in which the generator may also depend on the additional component $Z(s,t)$. Further developments of the theory can be found, for instance, in \cite{YongJiongmin2008Waro,WANG20194926,HamaguchiYushi2023AfaM}. Since the Type-I equation \eqref{eq:bsvie-intro} is obtained from the Type-II formulation by removing the dependence on $Z(s,t)$, many results developed for Type-II BSVIEs apply directly to the setting considered here.

BSVIEs arise naturally in several contexts where the standard BSDE framework is not sufficiently flexible to capture memory effects or time-inconsistency. One example is provided by stochastic control problems for systems governed by stochastic Volterra integral equations. In such problems, the state dynamics depend on the past, and the corresponding adjoint equations in the Pontryagin maximum principle are BSVIEs rather than BSDEs; see, for instance, \cite{YongJiongmin2008Waro,yufeng_wang_yong_2015,wang_zhang_2018}. They also appear in mathematical finance and economics, for example in the modelling of dynamic risk measures and recursive utilities with non-exponential discounting or other sources of time-inconsistency; see \cite{Yong01112007,di_nunno_2024}. Further connections have been established with time-inconsistent stochastic control problems and equilibrium Hamilton--Jacobi--Bellman equations \cite{wang_yong_2021,Hamaguchi_2021}, as well as with nonlinear Feynman--Kac formulas for Volterra-type equations \cite{WANG20194926,wang_yong_zhang_2022}.

Turning to the numerical approximation of such equations, the BSDE case is by now well developed, with a large literature devoted to Euler-type discretizations of \eqref{eq:bsde-intro}; see, for instance, \cite{zhang_bsde_method,BouchardBruno2004DaaM}. By comparison, numerical results for BSVIEs remain relatively limited. An early contribution is \cite{BenderPokalyuk2013}, where the authors study a class of BSVIEs whose free term is a functional of the Brownian motion and whose generator is independent of $Z$. They obtain a weak approximation by means of discrete BSVIEs driven by a binary random walk.

In a different direction, several works have used systems of BSDEs to approximate BSVIEs of the form \eqref{eq:bsvie-intro}. Given a partition $\pi=\{t_0,\dots,t_N\}$ of $[0,T]$, the BSVIE is approximated by a finite-dimensional system of BSDEs
\begin{flalign*}
    \big(Y^\pi(t_k,\cdot),Z^\pi(t_k,\cdot)\big),
    \qquad 0\leq k\leq N-1,
\end{flalign*}
where each pair is defined on $[t_k,T]$. As the mesh size $|\pi|$ tends to zero, this BSDE system converges to the solution of the BSVIE in the mean-square sense. In \cite{WANG20194926}, this approximation is used to derive a PDE representation of the solution for BSVIEs of the form
\begin{flalign}
    Y(t)
    =
   \psi(t,X(t),X(T))
    + \int_t^T
    G\big(t,s,X(t),X(s),Y(s),Z(t,s)\big)\,ds
    -
    \int_t^T Z(t,s)\,dB(s),
    \label{eq:forw-bckw-BSVIE}
\end{flalign}
where $G$ is deterministic and $X$ solves
\begin{flalign}\label{eq:forward-SDE-intro}
    X(t)
    =
    x
    +
    \int_0^t b(s,X(s))\,ds
    +
    \int_0^t \sigma(s,X(s))\,dB(s).
\end{flalign}
More recently, \cite{HamaguchiYushi2023AfaM} extended this strategy to Type-II BSVIEs. By introducing an Euler-type scheme for the associated BSDE system, the authors obtained a numerical approximation of the BSVIE, together with a convergence rate in the setting of \eqref{eq:forw-bckw-BSVIE}--\eqref{eq:forward-SDE-intro}.

We now turn to the main contributions of the paper. Under suitable Malliavin regularity assumptions on the data $(\Psi,G)$, we prove a Hölder-type regularity estimate for $Z^\pi$ in the second time variable, averaged over the first one, of the form
\begin{gather}\label{eq:holder-estimate-discrete-intro}
    \mathbb E\bigg[
    \bigg(
    \sum_{k\colon t_k \leq s \wedge r} \Delta_k
    |Z^{\pi}(t_k,s)-Z^\pi(t_k,r)|^2
    \bigg)^{\frac p2}
    \bigg]
    \leq
    K\bigl(|s-r|^{\frac p2}+|\pi|^{\frac p2}\bigr),
    \quad
    s,r\in[0,T],
\end{gather}
where $K>0$ is independent of the partition, with $\Delta_k \coloneqq t_{k+1} - t_{k}$. The additional term $|\pi|^{p/2}$ is a purely discrete mesh-defect term and vanishes as the mesh size tends to zero. This estimate has two main consequences. First, by passing to the limit as $|\pi|\to0$, it yields an averaged Hölder regularity estimate for the martingale integrand of the BSVIE:
\begin{gather}\label{eq:holder-estimate-continuous-intro}
    \mathbb E\bigg[
    \bigg(
    \int_0^{s \wedge r}
    |Z(t,s)-Z(t,r)|^2\,dt
    \bigg)^{\frac p2}
    \bigg]
    \leq
    K|s-r|^{\frac p2},
    \qquad
    s,r\in[0,T].
\end{gather}
This can be viewed as the Volterra counterpart of the $L^p$-Hölder regularity estimate for the martingale integrand of a BSDE obtained in \cite{HuYaozhong2011MCFB}.

Second, we introduce a new explicit Euler scheme for the approximating BSDE system, inspired by the scheme of \cite{zhang_bsde_method}. Establishing a convergence rate outside the forward setting \eqref{eq:forw-bckw-BSVIE}--\eqref{eq:forward-SDE-intro} is more delicate, since regularity arguments based on the underlying forward process are no longer available. For BSDEs, the convergence analysis of Euler-type schemes relies on suitable time-regularity estimates for the martingale integrand $Z$; see \cite{zhang_bsde_method,dinunno2024deepoperatorbsde}. The same issue arises for the proposed BSVIE scheme. In this case, however, the required regularity must account for the two-time structure of $Z$ and therefore takes a different form. The case $p=2$ of the discrete estimate \eqref{eq:holder-estimate-discrete-intro} provides the regularity needed to prove a convergence rate for the scheme.

We emphasize that the results of this paper are not restricted to BSVIEs of the form \eqref{eq:forw-bckw-BSVIE}. Apart from the Malliavin regularity assumptions on the data, the free term $\Psi$ and the generator $G$ are not required to be induced by a forward stochastic differential equation or to have any specific functional form.

For simplicity, we restrict the presentation to scalar BSVIEs driven by a one-dimensional Brownian motion. The results can be easily extended to arbitrary dimensions.

The paper is organized as follows. Section \ref{section:preliminaries} introduces the notation and recalls the main results on BSDEs, BSVIEs, and the approximation of BSVIEs by systems of BSDEs. In Section \ref{section:holder-regularity-bsde-system}, we introduce the Malliavin calculus framework for the approximating BSDE system and prove the discrete regularity estimate \eqref{eq:holder-estimate-discrete-intro} under suitable Malliavin regularity assumptions on the data. Section \ref{section:holder-reg-bsvie} states the corresponding assumptions for the continuous BSVIE and proves \eqref{eq:holder-estimate-continuous-intro} by passing the discrete estimate to the limit as $|\pi|\to0$. We also provide examples of data satisfying these assumptions. Finally, Section \ref{section:numerical-approx} introduces an explicit Euler scheme for the BSDE system and derives a convergence rate under the previous assumptions. The Appendix contains auxiliary results and proofs omitted from the main text.

\section{Preliminaries}\label{section:preliminaries}

\subsection{Definitions and notation}

Let $B=(B(t))_{t\in[0,T]}$ be a one-dimensional Brownian motion defined on a complete probability space $(\Omega,\mathcal F,\mathbb P)$. We denote by $\mathbb F=(\mathcal F_t)_{t\in[0,T]}$ the filtration generated by $B$, augmented with the $\mathbb P$-null sets, and we assume that $\mathcal F=\mathcal F_T$. For $p\geq1$, we write $L^p(\Omega)$ for the space of all $\mathcal F_T$-measurable random variables $X:\Omega\to\mathbb R$ such that $\mathbb E|X|^p<\infty$. If $X\in L^1(\Omega)$ and $t\in[0,T]$, we use the shorthand notation $\mathbb E_t(X)\coloneqq \mathbb E\big[X\mid\mathcal F_t\big]$. Finally, we set
\[
    \Delta(0,T) \coloneqq \{(t,s)\in[0,T]^2: 0\leq t\leq s\leq T\}.
\]
We now introduce notation for one-parameter process spaces. For $q \geq 2$, we define:
\begin{itemize}
    \item $L^{q}\big(\Omega; L^2(0,T) \big)$ is the space of processes
    $\varphi\colon \Omega\times[0,T]\to \mathbb{R}$ that are
    $\mathcal F_{T}\otimes\mathcal B([0,T])$-measurable and satisfy
    \begin{gather*}
        \norm{\varphi}_{2,q}
        \coloneqq
        \mathbb E\!\left[
        \left(
        \int_{0}^{T} |\varphi(t)|^2\,dt
        \right)^{q/2}
        \right]^{1/q}
        <\infty.
    \end{gather*}
    Similarly, $L^{q}\big(\Omega; L^\infty(0,T) \big)$ is the space of processes
    $\varphi\colon \Omega\times[0,T]\to \mathbb{R}$ that are
    $\mathcal F_{T}\otimes\mathcal B([0,T])$-measurable and satisfy
    \begin{gather*}
        \norm{\varphi}_{\infty,q}
    \coloneqq
    \mathbb E\!\left[
    \operatorname*{ess\,sup}_{t\in[0,T]} |\varphi(t)|^q
    \right]^{1/q}
    <\infty.
    \end{gather*}
    
    \item $\mathcal{H}^{q}(0,T)$ is the subspace of $L^{q}\big(\Omega; L^2(0,T) \big)$ of $\mathbb{F}$-progressively measurable processes.
\end{itemize}

We also introduce notation for two-parameter processes
$\varphi \colon \Omega \times \Delta(0,T) \to \mathbb{R}$ such that, for a.e. $t \in [0,T]$, $[t,T]\ni s \mapsto \varphi(t,s)$ is $\mathbb{F}$-progressively measurable. For $q\geq2$, we define:
\begin{itemize}
    \item $\mathcal{H}_\Delta^{q}(0,T)$ is the space of such processes satisfying
    \begin{gather*}
        \norm{\varphi}_{\mathcal{H}_\Delta^{q}}^q
        \coloneqq
        \mathbb E\!\left[
        \left(
        \int_{0}^{T}\int_{t}^{T}|\varphi(t,s)|^2\,ds\,dt
        \right)^{q/2}
        \right]
        <\infty.
    \end{gather*}

    \item $\mathcal{T}_\Delta^{q}(0,T)$ is the space of such processes satisfying
    \begin{gather*}
        \norm{\varphi}_{\mathcal{T}_\Delta^{q}}^q
        \coloneqq
        \mathbb E\!\left[
        \left(
        \operatorname*{ess\,sup}_{t\in[0,T]}
\int_t^T|\varphi(t,s)|^2\,ds
        \right)^{q/2}
        \right]
        <\infty.
    \end{gather*}

    \item $\mathcal{S}_\Delta^{q}(0,T)$ is the space of such processes satisfying
    \begin{gather*}
        \norm{\varphi}_{\mathcal{S}_\Delta^{q}}^q
        \coloneqq
        \mathbb E\!\left[
        \left(
        \operatorname*{ess\,sup}_{s\in[0,T]}
\int_0^s|\varphi(t,s)|^2\,dt
        \right)^{q/2}
        \right]
        <\infty.
    \end{gather*}

    \item $\mathcal{U}_\Delta^{q}(0,T)$ is the space of such processes satisfying
    \begin{gather*}
        \norm{\varphi}_{\mathcal{U}_\Delta^{q}}^q
        \coloneqq
        \mathbb E\!\left[
        \left(
        \operatorname*{ess\,sup}_{(t,s)\in\Delta(0,T)}
|\varphi(t,s)|^2
        \right)^{q/2}
        \right]
        <\infty.
    \end{gather*}
\end{itemize}

We recall some basic notation from Malliavin calculus. We refer to \cite{NualartDavidTMCa} for a complete treatment.

Let $\mathbb H=L^2(0,T)$ be endowed with the usual norm $\norm{\cdot}_{\mathbb H}$. We denote by $C_p^\infty(\mathbb R^n)$ the class of infinitely continuously differentiable functions $g:\mathbb R^n\to\mathbb R$ such that $g$ and all its partial derivatives have polynomial growth.

Let $\mathcal S$ be the class of smooth cylindrical random variables of the form
\begin{gather}\label{eq:F}
    F
    =
    g\left(
    \int_0^T h_1(r)\,dB(r),
    \dots,
    \int_0^T h_n(r)\,dB(r)
    \right),
\end{gather}
where $g\in C_p^\infty(\mathbb R^n)$ and $h_1,\dots,h_n\in\mathbb H$. For $F$ as in \eqref{eq:F}, we define its Malliavin derivative $DF$ as the $\mathbb H$-valued random variable
\begin{flalign*}
    DF
    =
    \sum_{i=1}^{n}
    \partial_{x_i}g\left(
    \int_0^T h_1(r)\,dB(r),
    \dots,
    \int_0^T h_n(r)\,dB(r)
    \right)h_i.
\end{flalign*}
For $p\geq1$, we denote by $\mathbb D^{1,p}$ the closure of $\mathcal S$ with respect to the norm
\begin{gather*}
    \norm{F}_{\mathbb D^{1,p}}
    =
    \left(
    \mathbb E|F|^p
    +
    \mathbb E\norm{DF}_{\mathbb H}^p
    \right)^{\frac1p}.
\end{gather*}

Higher-order Malliavin derivatives are defined by iteration. If $F\in\mathcal S$, then $D^kF$ is an $\mathbb H^{\otimes k}$-valued random variable. For $k\geq1$ and $p\geq1$, we denote by $\mathbb D^{k,p}$ the completion of $\mathcal S$ with respect to the norm
\begin{gather*}
    \norm{F}_{\mathbb D^{k,p}}
    =
    \left(
    \mathbb E|F|^p
    +
    \sum_{j=1}^{k}
    \mathbb E\norm{D^jF}_{\mathbb H^{\otimes j}}^p
    \right)^{\frac1p}.
\end{gather*}
For stochastic processes, we write $u\in\mathbb L^{1,p}(0,T)$ if $u(t) \in \mathbb{D}^{1,p}$ and \begin{gather*} 
\mathbb E\left[ \left( \int_0^T |u(t)|^2\,dt \right)^{\frac p2} \right] + \mathbb E\left[ \left( \int_0^T \int_0^T |D_{\theta}u(t)|^2 \,dt\,d\theta \right)^{\frac p2} \right] <\infty. 
\end{gather*} 
We also write $\mathbb L_a^{1,p}(0,T)$ for the subspace of progressively measurable elements of $\mathbb L^{1,p}(0,T)$. For such processes, one may choose a progressively measurable version of the Malliavin derivative field $(D_\theta u_t)_{(\theta,t)\in[0,T]^2}$.

When an interval $(T',T)$, with $0\leq T'\leq T$, is indicated in the notation of any of the one-parameter process spaces introduced above, the corresponding definition is understood with the process-time variable restricted to $[T',T]$. For the Malliavin spaces, the Malliavin parameter still ranges over $[0,T]$.

Finally, when working with random fields $f\colon\Omega\times\mathbb R^m\to\mathbb R$, we use the notation $(D_\theta f)(X)$ to indicate that the Malliavin derivative is first taken with respect to $\omega$ for fixed $x$, and that the resulting random field is then evaluated at $x=X$, where $X$ is an $\mathbb R^m$-valued random variable. This should not be confused with the Malliavin derivative of the composition $f(X)$, denoted by $D_\theta f(X)$.

Throughout the paper, $C$ will be a general constant, which might change from line to line.

\subsection{BSVIEs and approximation by BSDE systems}

Throughout this subsection, fix $p\geq2$. Consider the BSVIE given in \eqref{eq:bsvie-intro}, where $G \colon\Omega\times\Delta(0,T)\times\mathbb R\times\mathbb R\to\mathbb R$ is
$\mathcal F\otimes\mathcal B(\Delta(0,T))\otimes\mathcal B(\mathbb R\times\mathbb R)$-measurable and such that, for every $(t,y,z)\in[0,T]\times\mathbb R\times\mathbb R$, the map
$[t,T]\ni s\mapsto G(t,s,y,z)$ is $\mathbb F$-progressively measurable. Let $\Psi\colon\Omega\times[0,T]\to\mathbb R$ be $\mathcal F\otimes\mathcal B([0,T])$-measurable.

The following result gives the well-posedness and $L^p$ a priori estimate used throughout the paper. Well-posedness follows from \cite[Theorem 3.7]{YongJiongmin2008Waro}; see also \cite[Lemma 3.2]{HamaguchiYushi2023AfaM}, while the estimate follows from \cite[Theorem 3.4]{HamaguchiYushi2023AfaM}. Although these results are stated there for Type-II BSVIEs, they apply to the present Type-I setting by taking the generator to be independent of $Z(s,t)$.

\begin{theorem}\label{thm:well-posedness-BSVIE}
    Suppose that $\Psi \in L^{p}\big(\Omega; L^2(0,T) \big)$, that
    $G^0 \coloneqq G(\cdot, \cdot, 0, 0) \in \mathcal{H}_\Delta^{p}(0,T)$, and that $G$ is uniformly Lipschitz in $(y,z)$. More precisely, assume that there exists a constant $[G]_L>0$ such that, $d\mathbb P\otimes dt\otimes ds$-a.e. on $\Omega\times\Delta(0,T)$,
    \begin{gather*}
        | G(t, s, y_1, z_1) - G(t, s, y_2, z_2)|
        \leq
        [G]_{L} \big( |y_1-y_2| + |z_1 - z_2|  \big),
    \end{gather*}
    for all $(y_1,z_1),(y_2,z_2)\in\mathbb R^2$. Then \eqref{eq:bsvie-intro} admits a unique solution $(Y,Z) \in \mathcal{H}^p(0,T) \times \mathcal{H}_{\Delta}^p(0,T)$.

    Moreover, define $Y(t,s)$ by
    \begin{flalign*}
        Y(t,s) \coloneqq \mathbb{E}_s \bigg[ \Psi(t) + \int_s^T G(t,r,Y(r),Z(t,r))dr \bigg], \quad  0 \leq t \leq s \leq T.
    \end{flalign*}
    Then there exists a constant $C>0$, depending only on $p$, $T$ and $[G]_L$, such that
    \begin{flalign}
       \mathbb{E}\Bigg[ \sup_{s \in [0,T]}  \bigg( \int_0^s |Y(t,s)|^2 dt \bigg)^{\frac{p}{2}} + \bigg( \int_0^T |Y(t)|^2 dt \bigg)^{\frac{p}{2}} + & \bigg( \int_0^T \int_t^T  |Z(t,s)|^2 ds dt \bigg)^{\frac{p}{2}} \Bigg] \leq C \Big( \norm{\Psi}_{2,p}^p + \norm{G^0}_{\mathcal{H}_\Delta^p}^p\Big).\label{eq:energy-estimates-BSVIE-solution}
    \end{flalign}
\end{theorem}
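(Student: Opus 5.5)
The plan is a fixed-point argument on the family of BSDEs indexed by the first time variable, followed by a priori $L^p$ estimates that treat $t$ as a parameter.

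\textbf{Step 1: frozen-$Y$ problem.} Given $y\in\mathcal H^p(0,T)$, for each fixed $t$ consider the BSDE on $[t,T]$
\begin{gather*}
    \eta(t,s)=\Psi(t)+\int_s^T G\big(t,r,y(r),\zeta(t,r)\big)\,dr-\int_s^T\zeta(t,r)\,dB(r).
\end{gather*}
Since $G$ is Lipschitz in $z$ and $G(t,\cdot,y(\cdot),0)$ lies in $L^p$ for a.e.\ $t$, the classical $L^p$ theory of BSDEs gives a unique solution $(\eta(t,\cdot),\zeta(t,\cdot))$.

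Joint measurability in $(t,s)$ comes from writing the solution as the $L^p$-limit of Picard iterates, each of which is jointly measurable. The standard $L^p$ estimate then reads
\begin{gather*}
    \mathbb E\Big[\sup_s|\eta(t,s)|^p+\Big(\int_t^T|\zeta(t,s)|^2ds\Big)^{p/2}\Big]\le C\,\mathbb E\Big[|\Psi(t)|^p+\Big(\int_t^T|G(t,s,y(s),0)|ds\Big)^p\Big].
\end{gather*}
Setting $\Gamma(y)(t):=\eta(t,t)$ defines a candidate map. A solution of the BSVIE is exactly a fixed point, with $Y(t)=\eta(t,t)$ and $Z=\zeta$. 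Moreover $Y(t,s)=\eta(t,s)$ by taking conditional expectations.

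\textbf{Step 2: the norms we actually need.} The estimate above is pointwise in $t$ with $p$-th moments. The target norms, however, place the $L^2(dt)$ integral inside the $p/2$ power.

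I would therefore work directly with the random quantity $\int_0^T\int_t^T|\zeta|^2\,ds\,dt$. One route is to apply Itô's formula to $|\eta(t,s)|^2$ in $s$, integrate in $t$, and then take the $p/2$ power. The stochastic integral term
\begin{gather*}
    \int_0^T\int_t^T\eta(t,s)\zeta(t,s)\,dB(s)\,dt=\int_0^T\Big(\int_0^s\eta(t,s)\zeta(t,s)\,dt\Big)dB(s)
\end{gather*}
is handled by stochastic Fubini and BDG. This yields a bound of the form
\begin{gather*}
    C\,\mathbb E\Big[\Big(\sup_s\int_0^s|\eta(t,s)|^2dt\Big)^{p/4}\Big(\int\!\!\int|\zeta|^2\Big)^{p/4}\Big],
\end{gather*}
which is absorbed by Young's inequality.

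The term $\sup_s\int_0^s|\eta(t,s)|^2dt$ is controlled through the representation
\begin{gather*}
    \eta(t,s)=\mathbb E_s\Big[\Psi(t)+\int_s^TG\,dr\Big].
\end{gather*}
Applying Doob's inequality in $L^{p/2}$ to the $L^2(dt)$-valued martingale $s\mapsto\mathbb E_s[\Psi(\cdot)+\dots]$ bounds it by $\|\Psi\|_{2,p}^p$ plus a term from $G$. This Hilbert-valued Doob step is where the $\sup_s\int_0^s$ quantity enters. The $G$ term is split via the Lipschitz property into $G^0$, $y$, and $\zeta$ contributions, and the $\zeta$ contribution carries a factor $(T-s)^{1/2}$ that must be absorbed.

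\textbf{Step 3: contraction and conclusion.} Applying the same estimates to differences, with $\Psi=0$ and generator increment bounded by $[G]_L|y_1-y_2|$, should give
\begin{gather*}
    \|\Gamma(y_1)-\Gamma(y_2)\|\le C(T-S)^{1/2}\|y_1-y_2\|
\end{gather*}
on a short interval $[S,T]$. For an equivalent weighted norm $e^{\beta s}$, the constant becomes small for large $\beta$. The result is a global contraction on $\mathcal H^p(0,T)$, hence existence and uniqueness.

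Finally, the estimate \eqref{eq:energy-estimates-BSVIE-solution} for the fixed point follows by rerunning Step 2 with $y=Y$. The $Y$ contribution $\int_0^T|Y|^2$ is closed by a Gronwall-type backward argument in the variable $S$: one estimates on $[S,T]$, then iterates over finitely many subintervals of fixed length depending on $[G]_L$.

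\textbf{Main obstacle.} The main difficulty is Step 2. The $L^p$ estimates must be obtained with the $dt$-integral inside the power $p/2$, which is not simply the pointwise-in-$t$ BSDE estimate integrated over $t$, since $p\ge2$. This requires the stochastic Fubini/BDG and Hilbert-valued Doob argument.

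I would first try to avoid it by citing the Type-II results of Yong and Hamaguchi, as the text indicates. The only check needed is that the Type-I generator satisfies their hypotheses with the dependence on $Z(s,t)$ vanishing, i.e.\ with Lipschitz constant zero in that argument.
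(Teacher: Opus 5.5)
Your fallback is exactly the paper's proof. The paper gives no argument beyond citing \cite[Theorem 3.7]{YongJiongmin2008Waro} and \cite[Lemma 3.2]{HamaguchiYushi2023AfaM} for well-posedness and \cite[Theorem 3.4]{HamaguchiYushi2023AfaM} for the estimate, noting that the Type-II results specialize to the Type-I case by dropping the dependence on $Z(s,t)$.

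Your self-contained sketch is therefore a genuinely different route. Its architecture is sound: Itô's formula for $|\eta(t,s)|^2$ in $s$ integrated over $t$, stochastic Fubini turning $\int_0^T\int_t^T\eta\zeta\,dB\,dt$ into one stochastic integral with adapted integrand $\int_0^s\eta(t,s)\zeta(t,s)\,dt$, BDG plus Young, and a Hilbert-valued Doob bound. It also shows why $\sup_s\int_0^s|Y(t,s)|^2dt$ has to be part of the estimate. Three steps, however, are misstated.

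\emph{Step 1 integrability.} The hypotheses do not put the data in $L^p$ for a.e.\ $t$, as Step~1 asserts. Let $U$ be $\mathcal F_T$-measurable and uniform on $[0,T]$, and set $\Psi(t)=|t-U|^{-1/4}$. Then $\int_0^T|\Psi(t)|^2dt=2\sqrt U+2\sqrt{T-U}\le2\sqrt{2T}$, so $\Psi\in L^p(\Omega;L^2(0,T))$ for every $p$. Yet $\mathbb E|\Psi(t)|^p=\infty$ for every $t$ once $p\ge4$, and an analogous adapted example exists for $G^0$. Jensen only gives $\mathbb E|\Psi(t)|^2+\mathbb E\int_t^T|G^0(t,s)|^2ds<\infty$ for a.e.\ $t$. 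So each $t$-BSDE must be solved, and the Picard limits taken, in the $L^2$ theory. Your displayed pointwise $L^p$ estimate may then read $\infty\le\infty$, and all $L^p$ control has to come from Step~2.

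\emph{The Doob step.} The process $s\mapsto\mathbb E_s[\Psi(t)+\int_s^TG\,dr]$ is not a martingale, because the lower limit moves. Instead, dominate $|\eta(t,s)|\le\mathbb E_s[\xi(t)]$ for $s\ge t$, where $\xi(t):=|\Psi(t)|+\int_t^T|G(t,r,y(r),\zeta(t,r))|\,dr$. Then apply Doob's $L^p$ inequality to the real submartingale $s\mapsto\|\mathbb E_s[\xi]\|_{L^2(0,T)}$. An $L^{p/2}$ maximal inequality for its square breaks down at $p=2$, which is the case the paper uses later.

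\emph{The iteration over subintervals.} This is more than a Gronwall step. Before passing to $[0,S]$, you must solve, for $t<S$, the parametrized BSDEs on the strip $s\in[S,T]$ with $Y|_{[S,T]}$ already known. The next free term is then $Y(t,S)$, which is controlled precisely by the $\sup_s$ term.

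A cleaner option is to use the weight $e^{\beta s}$ in both the Itô and the Doob steps. For $t\le s$,
\[
e^{\beta s/2}\int_s^TL\big(|y(r)|+|\zeta(t,r)|\big)dr\le\frac{L}{\sqrt\beta}\Big(\int_0^Te^{\beta r}|y(r)|^2dr\Big)^{1/2}+\frac{L}{\sqrt\beta}\Big(\int_t^Te^{\beta r}|\zeta(t,r)|^2dr\Big)^{1/2},
\]
and the right-hand side is independent of $s$. Hence the $y$- and $\zeta$-contributions become small for large $\beta$. Both the a priori estimate and the contraction then close on all of $[0,T]$ at once, with no subdivision.
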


We next describe the approximation of the BSVIE by a system of BSDEs, following the approach of \cite{WANG20194926, HamaguchiYushi2023AfaM}. Let $\pi = \{0 = t_0 < \cdots < t_N = T\}$ be a partition of $[0,T]$, and write $\Delta_k \coloneqq t_{k+1}-t_k$ for $k=0,\dots,N-1$. We define the projection $\tau \colon [0,T]\to\pi$ by setting $\tau(r)=t_k$ for $r\in[t_k,t_{k+1})$, with the convention $\tau(T)=t_{N-1}$ at the terminal time.

Let $\big\{\Psi_k^\pi\big\}_{k=0}^{N-1}$ be a family of $\mathcal F_T$-measurable random variables, and let $\big\{G_k^\pi\big\}_{k=0}^{N-1}$ be a family of random fields $G_k^\pi \colon \Omega \times [t_k,T] \times \mathbb{R} \times \mathbb{R} \to \mathbb{R}$. More precisely, each $G_k^\pi$ is
$\mathcal F\otimes\mathcal B([t_k,T])\otimes\mathcal B(\mathbb R\times\mathbb R)$-measurable and, for every $(y,z)\in\mathbb R\times\mathbb R$, the map $[t_k,T]\ni s\mapsto G_k^\pi(s,y,z)$ is $\mathbb F$-progressively measurable. We assume that the generators vanish on the diagonal cell, namely
\begin{gather}\label{eq:vanish-diagonal}
    G_k^\pi(s,y,z) = 0 \qquad  s \in [t_k, t_{k+1}), \ (y,z) \in \mathbb{R}\times \mathbb{R}.
\end{gather}
We associate with these discrete data the piecewise constant objects
\begin{gather*}
    \Psi^\pi(t)
    \coloneqq
    \Psi_k^\pi, 
    \qquad
    G^\pi(t,s,y,z)
    \coloneqq
    G_k^\pi(s,y,z),
    \qquad t\in[t_k,t_{k+1}).
\end{gather*}
We also write $G^{\pi,0} \coloneqq \big( G^\pi(t, s, 0, 0) \big)_{(t,s) \in \Delta(0,T)}$. 

We now define the approximating BSDE system. For each fixed
$k\in\{0,\dots,N-1\}$, and for each $l\in\{k,\dots,N-1\}$, let
\begin{flalign}
Y^{\pi} (t_k, s) \label{def:BSDE-app-BSVIE}
&= Y^{\pi} (t_k, t_{l+1})
 + \int_s^{t_{l+1}} 
   G_k^\pi\Big(r, Y^{\pi} (t_l, r),
   Z^{\pi} (t_k, r)\Big)\, dr  - \int_s^{t_{l+1}} Z^{\pi} (t_k, r) \, dB(r),
\quad s \in [t_l, t_{l+1}], \\
Y^{\pi} (t_k, t_N) &= \Psi_k^\pi. \notag
\end{flalign}
Equivalently, $(Y^\pi(\tau(t), t), Z^\pi(\tau(t), s))_{(t,s) \in \Delta(0,T)}$ satisfies the BSVIE
\begin{flalign}\label{eq:bsvie-satisfied-by-discrete}
     Y^\pi(\tau(t), t)
    =
    \Psi^\pi(t)
    +
    \int_t^T 
    G^\pi\big(t,s,  Y^\pi(\tau(s), s), 
     Z^\pi(\tau(t),s)\big)\,ds 
    -
    \int_t^T  Z^\pi(\tau(t),s)\,dB(s).
\end{flalign}

The next result gives the well-posedness of the BSDE system \eqref{def:BSDE-app-BSVIE}, together with the corresponding a priori estimate. Its proof follows by adapting \cite[Theorem 3.4]{HamaguchiYushi2023AfaM} to the present Type-I setting.

\begin{theorem}\label{theorem:energy-estimates}
    Suppose that $\Psi^\pi \in L^{p}\big(\Omega; L^2(0,T) \big)$, that $G^{\pi,0} \in \mathcal{H}_{\Delta}^{p}(0,T)$, and that $G^\pi$ is uniformly Lipschitz in $(y,z)$, with Lipschitz constant $[G^\pi]_{L}$. Then the BSDE system \eqref{def:BSDE-app-BSVIE} admits a unique solution $(Y^\pi, Z^\pi )$ with
    \begin{gather*}
         \big(Y^\pi(t_k, \cdot), Z^\pi(t_k, \cdot) \big) \in \mathcal{H}^{p}(t_k,T) \times \mathcal{H}^p(t_k, T)
    \end{gather*}
    for each $k\in\{0,\dots,N-1\}$. Moreover, there exists a constant $C>0$, depending only on $p$, $T$ and $[G^\pi]_L$, such that 
    \begin{flalign*}
       \mathbb{E}\Bigg[ 
        \sup_{s\in[0,T]}
        \bigg(
        \sum_{k:t_k\leq s}
        \Delta_k &
        |Y^\pi(t_k,s)|^2
        \bigg)^{\frac p2} + 
        \bigg(  
        \int_{0}^{T}
        |Y^\pi(\tau(r),r)|^2\,dr
        \bigg)^{\frac p2}
         \\ &+
        \bigg(
        \sum_{k=0}^{N-1} \Delta_k
        \int_{t_{k+1}}^T
        |Z^\pi(t_k,s)|^2\,ds 
        \bigg)^{\frac p2}
        \Bigg]
        \leq
        C\bigg(
        \norm{\Psi^\pi}_{2,p}^p
        +
  \norm{G^{\pi,0}}_{\mathcal{H}_{\Delta}^p}^{p}
        \bigg).
    \end{flalign*}
\end{theorem}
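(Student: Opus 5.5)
The proof has two parts: existence and uniqueness by backward induction over the cells of the partition, and the a priori estimate by an $L^p$ argument carried out on each cell and summed with weights $\Delta_k$.

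\emph{Well-posedness.} The system \eqref{def:BSDE-app-BSVIE} has a triangular structure: on $[t_l,t_{l+1}]$, the equation for index $k$ involves $Y^\pi(t_l,\cdot)$ only through the generator. For $k<l$ this is an exogenous input. For $k=l$ the generator vanishes by \eqref{eq:vanish-diagonal}. I would therefore proceed backward in $l=N-1,\dots,0$.

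On the last interval $[t_{N-1},t_N]$, first solve the diagonal equation $k=N-1$. It is a martingale representation problem with terminal value $\Psi^\pi_{N-1}$ and zero generator. This determines $Y^\pi(t_{N-1},\cdot)$ on $[t_{N-1},T]$. For each $k<N-1$, the equation on this interval is then a standard Lipschitz BSDE in $Z^\pi(t_k,\cdot)$. Its driver is $r\mapsto G^\pi_k(r,Y^\pi(t_{N-1},r),z)$, which is Lipschitz in $z$, and its terminal value is $\Psi^\pi_k$. Classical $L^p$ BSDE theory gives a unique solution in $\mathcal H^p$. Iterating backward, at step $l$ the terminal values $Y^\pi(t_k,t_{l+1})$ are already known and lie in $L^p$, and $Y^\pi(t_l,\cdot)$ on $[t_l,t_{l+1}]$ comes from the diagonal equation. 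Uniqueness follows from the same induction.

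\emph{A priori estimate.} The natural route is to apply Itô's formula to $e^{\beta s}|Y^\pi(t_k,s)|^2$ for each $k$ on $[s,T]$, and to use Burkholder--Davis--Gundy (BDG) inequalities together with the Lipschitz bound
\begin{gather*}
|G_k^\pi(r,y,z)|\le |G_k^\pi(r,0,0)|+[G^\pi]_L(|y|+|z|).
\end{gather*}
The standard BSDE $L^p$ estimate then yields, for each $k$,
\begin{gather*}
\mathbb E\Big[\sup_{u\ge s}|Y^\pi(t_k,u)|^p+\Big(\int_s^T|Z^\pi(t_k,r)|^2dr\Big)^{p/2}\Big]
\le C\,\mathbb E\Big[\Big(|\Psi^\pi_k|+\int_s^T\big(|G^\pi_k(r,0,0)|+|Y^\pi(\tau(r),r)|\big)dr\Big)^p\Big].
\end{gather*}
In the Volterra setting, however, one cannot work index by index. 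The quantity to control is the weighted sum $\sum_k\Delta_k|Y^\pi(t_k,s)|^2$ inside the $p/2$ power.

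I would therefore apply Itô's formula directly to the aggregated process $\Phi(s):=\sum_{k:t_k\le s}\Delta_k e^{\beta s}|Y^\pi(t_k,s)|^2$. On each cell $[t_l,t_{l+1})$ the set of indices is fixed, so $\Phi$ is a semimartingale there. At the grid points $\Phi$ jumps downward as $s$ decreases, because one term is dropped. This jump has the right sign for a backward estimate.

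The key coupling term is $2\sum_k\Delta_k Y^\pi(t_k,r)\,[G^\pi_k]$. It contains the diagonal quantity $|Y^\pi(\tau(r),r)|$, which I bound by Young's inequality by
\begin{gather*}
\epsilon\,\sum_k\Delta_k |Y^\pi(t_k,r)|^2+C_\epsilon\,|Y^\pi(\tau(r),r)|^2 .
\end{gather*}
The diagonal quantity itself satisfies
\begin{gather*}
Y^\pi(\tau(r),r)=\mathbb E_r\Big[\Psi^\pi(r)+\int_r^T G^\pi(r,u,\dots)\,du\Big],
\end{gather*}
by \eqref{eq:bsvie-satisfied-by-discrete}. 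Integrating this representation in $r$ over $[s,T]$ and using Doob's and BDG's inequalities gives a backward Gronwall inequality for the function
\begin{gather*}
a(s):=\mathbb E\Big[\Big(\int_s^T|Y^\pi(\tau(r),r)|^2dr\Big)^{p/2}\Big].
\end{gather*}
This mirrors \cite[Theorem 3.4]{HamaguchiYushi2023AfaM}.

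\emph{Main obstacle.} The step I expect to be hardest is the $p/2$-moment control of the martingale term $\sum_k\Delta_k\int Y^\pi Z^\pi\,dB$ together with the supremum over $s$. I would use BDG to bound it by
\begin{gather*}
\mathbb E\Big[\sup_s\Phi^{1/2}\Big(\sum_k\Delta_k\int|Z^\pi|^2\Big)^{1/2}\Big]^{p/2},
\end{gather*}
then absorb it via Young's inequality. Throughout, the constants must depend only on $p$, $T$ and $[G^\pi]_L$, and never on $N$; the $\Delta_k$-weighting is what ensures this. Note also that the $Z$-term in the statement only covers $s\ge t_{k+1}$. This suggests handling the diagonal cell separately: there $Z^\pi(t_k,\cdot)$ on $[t_k,t_{k+1}]$ is a pure martingale integrand, controlled by $Y^\pi(t_k,t_{k+1})$, and can be dropped from the estimate.
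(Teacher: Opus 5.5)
Your plan is essentially the paper's route. The paper gives no argument beyond saying the result follows by adapting \cite[Theorem 3.4]{HamaguchiYushi2023AfaM}, and your combination of backward induction over the cells (well-posedness), an It\^o--BDG estimate for the $\Delta_k$-weighted aggregate of the $Y^\pi(t_k,\cdot)$, and a Gronwall control of the diagonal $Y^\pi(\tau(r),r)$ is what that adaptation amounts to.

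The step that actually needs care is the diagonal Gronwall, not the BDG absorption you single out. The representation of $Y^\pi(\tau(r),r)$ contains the term $\int_r^T|Z^\pi(\tau(r),u)|\,du$. If you bound it by your aggregated $Z$-estimate, which itself contains $a(s)$, the loop only closes after you extract a small factor (e.g.\ from $e^{\beta u}$ weights). A direct route avoids this: combine the conditional single-BSDE bound
$|Y^\pi(t_k,r)|^2\le C\,\mathbb E_r\big[|\Psi_k^\pi|^2+\big(\int_r^T(|G_k^\pi(u,0,0)|+|Y^\pi(\tau(u),u)|)\,du\big)^2\big]$ for $r\geq t_k$, the dual Doob inequality $\mathbb E\big[\big(\int_s^T\mathbb E_r[f(r)]\,dr\big)^{p/2}\big]\le (p/2)^{p/2}\,\mathbb E\big[\big(\int_s^T f(r)\,dr\big)^{p/2}\big]$ for $f\geq0$, and Minkowski's inequality in $r$. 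Together these give $a(s)^{2/p}\le C\big(\norm{\Psi^\pi}_{2,p}^2+\norm{G^{\pi,0}}_{\mathcal H_\Delta^p}^2\big)+C\int_s^T a(r)^{2/p}\,dr$, with no $Z$ term.
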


As an immediate consequence, we obtain the following stability estimate for the BSDE system \eqref{def:BSDE-app-BSVIE}.

\begin{lemma}\label{lemma:stability-estimate-system-BSDE}
    Let $(\Psi^{\pi,i}, G^{\pi,i})$, $i=1,2$, be two families of discrete data satisfying the assumptions in Theorem \ref{theorem:energy-estimates}. Denote by $\big(Y_i^\pi, Z_i^\pi \big)$, $i=1,2$, the corresponding solutions of \eqref{def:BSDE-app-BSVIE}. Let us define
    \begin{flalign*}
        &\delta Y^\pi(\tau(t), s) \coloneqq (Y_1^\pi - Y_2^\pi)(\tau(t), s), \quad \delta Z^\pi(\tau(t), s) \coloneqq (Z_1^\pi - Z_2^\pi)(\tau(t), s) \\
        &\delta \Psi^\pi(\tau(t)) \coloneqq (\Psi^{\pi, 1} - \Psi^{\pi, 2})(\tau(t)), \quad \delta G^\pi(\tau(t), s, y, z) \coloneqq (G^{\pi, 1} - G^{\pi, 2})(\tau(t), s, y, z)
    \end{flalign*}
    Then there exists a constant $C>0$, depending only on $p$, $T$, $[G^{\pi,1}]_L$ and $[G^{\pi,2}]_L$, such that
\begin{align*}
    \mathbb{E}\Bigg[ 
    \sup_{s\in[0,T]}
    \bigg(
    \sum_{k:t_k\leq s} &
    \Delta_k
    |\delta Y^\pi(t_k,s)|^2
    \bigg)^{\frac p2}
    +
    \left(
    \int_0^T
    |\delta Y^\pi(\tau(r),r)|^2\,dr
    \right)^{\frac{p}{2}}
    +
    \left(
    \sum_{k=0}^{N-1}
    \Delta_k
    \int_{t_{k+1}}^T
    |\delta Z^\pi(t_k,r)|^2\,dr
    \right)^{\frac{p}{2}}
    \Bigg] \\ &
    \leq
    C\mathbb E\Bigg[
    \left(
    \sum_{k=0}^{N-1}
    \Delta_k
    |\delta\Psi^\pi(t_k)|^2
    \right)^{\frac{p}{2}}
    \Bigg]
     +
    C\mathbb E\Bigg[
    \left(
    \sum_{k=0}^{N-1}
    \Delta_k
    \int_{t_{k+1}}^T
    \Big|
    \delta G^\pi
    \big(
    t_k,r,
    Y_1^\pi(\tau(r),r),
    Z_1^\pi(t_k,r)
    \big)
    \Big|^2\,dr
    \right)^{\frac{p}{2}}
    \Bigg].
\end{align*}
\end{lemma}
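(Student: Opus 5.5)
The plan is to write the difference $(\delta Y^\pi,\delta Z^\pi)$ as the solution of a BSDE system of the form \eqref{def:BSDE-app-BSVIE} with new data, and then apply the a priori estimate of Theorem \ref{theorem:energy-estimates} to it. This is the natural reading of ``immediate consequence''. For each $k$ and $l\geq k$, subtracting the two systems on $[t_l,t_{l+1}]$ gives
\begin{flalign*}
\delta Y^\pi(t_k,s) = \delta Y^\pi(t_k,t_{l+1}) + \int_s^{t_{l+1}} \widetilde G_k^\pi\big(r,\delta Y^\pi(t_l,r),\delta Z^\pi(t_k,r)\big)\,dr - \int_s^{t_{l+1}}\delta Z^\pi(t_k,r)\,dB(r),
\end{flalign*}
with terminal value $\delta Y^\pi(t_k,t_N)=\delta\Psi^\pi(t_k)$. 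The new generator is
\begin{flalign*}
\widetilde G_k^\pi(r,y,z) \coloneqq G_k^{\pi,2}\big(r, Y_2^\pi(t_l,r)+y, Z_2^\pi(t_k,r)+z\big) - G_k^{\pi,2}\big(r,Y_2^\pi(t_l,r),Z_2^\pi(t_k,r)\big) + \delta G_k^\pi\big(r,Y_1^\pi(t_l,r),Z_1^\pi(t_k,r)\big).
\end{flalign*}
Here we use the splitting
\begin{flalign*}
G^{\pi,1}(\cdot,Y_1,Z_1)-G^{\pi,2}(\cdot,Y_2,Z_2)=\big[G^{\pi,2}(\cdot,Y_1,Z_1)-G^{\pi,2}(\cdot,Y_2,Z_2)\big]+\delta G^\pi(\cdot,Y_1,Z_1),
\end{flalign*}
which is why the right-hand side of the lemma evaluates $\delta G^\pi$ at the solution with index $1$.

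Next I would check that the new data meet the hypotheses of Theorem \ref{theorem:energy-estimates}.
\begin{itemize}
\item The generator $\widetilde G^\pi$ is uniformly Lipschitz in $(y,z)$ with constant $[G^{\pi,2}]_L$.
\item It vanishes on the diagonal cell, since both $G^{\pi,i}_k$ do.
\item It is progressively measurable in $r$, since $Y_i^\pi(t_l,\cdot)$ and $Z_i^\pi(t_k,\cdot)$ are progressively measurable.
\item The terminal family $\delta\Psi^\pi$ lies in $L^p(\Omega;L^2(0,T))$.
\item Its value at zero is $\widetilde G^{\pi,0}=\delta G^\pi(\cdot,Y_1^\pi,Z_1^\pi)$. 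This lies in $\mathcal H_\Delta^p$ because $|\delta G^\pi(y,z)|\le |G^{\pi,1,0}|+|G^{\pi,2,0}|+([G^{\pi,1}]_L+[G^{\pi,2}]_L)(|y|+|z|)$, together with the integrability of $(Y_1^\pi,Z_1^\pi)$ given by Theorem \ref{theorem:energy-estimates}.
\end{itemize}

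One point needs care. In the original system \eqref{def:BSDE-app-BSVIE}, the $y$-argument is $Y^\pi(t_l,r)$, i.e.\ the diagonal value of another equation of the system, not the equation's own $Y$. So $\widetilde G$ must be indexed by $(k,l)$ through the shifts $Y_2^\pi(t_l,r)$. This is exactly the structure allowed in \eqref{def:BSDE-app-BSVIE}, because there too the generator of equation $k$ on $[t_l,t_{l+1}]$ depends on $r$ only through a progressively measurable random field. Since the shift by $Y_2^\pi(t_l,r)$ is such a field, the difference system is again of the form \eqref{def:BSDE-app-BSVIE}. By uniqueness in Theorem \ref{theorem:energy-estimates}, its solution is $(\delta Y^\pi,\delta Z^\pi)$.

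Applying the estimate of Theorem \ref{theorem:energy-estimates} then gives the claim directly, with $\norm{\delta\Psi^\pi}_{2,p}^p=\mathbb E[(\sum_k\Delta_k|\delta\Psi^\pi(t_k)|^2)^{p/2}]$. The $\mathcal H_\Delta^p$ norm of $\widetilde G^{\pi,0}$ equals the second term on the right-hand side of the lemma, because $\widetilde G^{\pi,0}$ vanishes for $r<t_{k+1}$ and so only the integrals over $[t_{k+1},T]$ remain. The constant depends on $p$, $T$ and $[G^{\pi,2}]_L$, which is allowed. The main obstacle is the diagonal-dependence bookkeeping of the previous paragraph, namely checking that the random shifts keep the generator in the admissible class. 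If Theorem \ref{theorem:energy-estimates} were meant only for generators of a more rigid form, I would instead rerun its proof for the difference (Itô's formula on $|\delta Y^\pi(t_k,\cdot)|^2$, BDG, Gronwall across the cells). The Lipschitz bound gives the same structure, so the argument would go through unchanged.
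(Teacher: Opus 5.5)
Your proposal is correct and is essentially the argument the paper intends when it calls the lemma an immediate consequence of Theorem \ref{theorem:energy-estimates}: the difference $(\delta Y^\pi,\delta Z^\pi)$ solves a system of the same form with the shifted generator $\widetilde G_k^\pi$, whose value at $(0,0)$ is $\delta G^\pi$ evaluated along $(Y_1^\pi,Z_1^\pi)$, and the a priori estimate applied to this system gives the claim. The bookkeeping concern you raise does not arise, because the dependence of $\widetilde G_k^\pi$ on $l$ enters only through $t_l=\tau(r)$, so $\widetilde G_k^\pi$ is already a single progressively measurable random field on $[t_k,T]$ of the admissible type.
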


The next theorem shows that the error between the BSVIE solution and the approximating BSDE system is controlled by the approximation errors in the free term and the generator. Indeed, the piecewise constant pair induced by the BSDE system satisfies \eqref{eq:bsvie-satisfied-by-discrete}, so the result follows from the $L^2$-stability estimate for BSVIEs; see \cite[Theorem 3.7]{YongJiongmin2008Waro}, specialized to the Type-I setting. Consequently, convergence of the discrete data implies convergence of the BSDE system to the BSVIE solution.

\begin{theorem}\label{thm:convergence-BSDE-system-BSVIE}
Suppose that $(\Psi, G)$ satisfy the assumptions in Theorem \ref{thm:well-posedness-BSVIE} with $p=2$, and let $(Y,Z)$ be the unique solution to \eqref{eq:bsvie-intro}. Let $(\Psi^\pi,G^\pi)$ satisfy the assumptions in Theorem \ref{theorem:energy-estimates} with $p=2$, with Lipschitz constant $[G^\pi]_L$ independent of $\pi$, and denote by $(Y^\pi,Z^\pi)$ the solution of the BSDE system \eqref{def:BSDE-app-BSVIE}.

Define the approximation errors
\begin{gather*}
    \varepsilon_{\Psi}^\pi
    \coloneqq
    \mathbb E\left[
    \int_0^T
    |\Psi(t)-\Psi^\pi(t)|^2\,dt
    \right], \quad \varepsilon_{G}^\pi
    \coloneqq
    \mathbb E\left[
    \int_0^T
    \left(
    \int_t^T
    \left|
    G(t,s,Y(s),Z(t,s))
    -
    G^\pi(t,s,Y(s),Z(t,s))
    \right|\,ds
    \right)^2
    dt \right].
\end{gather*}
Then there exists a constant $C>0$, independent of $\pi$, such that
\begin{align*}
    \mathbb E\Bigg[
    \int_0^T |Y(t)-Y^\pi(\tau(t),t)|^2\,dt
    +
    \int_0^T\int_t^T
    |Z(t,s)-Z^\pi(\tau(t),s)|^2\,ds\,dt 
    \Bigg]
    \leq
    C\big(\varepsilon_{\Psi}^\pi+\varepsilon_{G}^\pi\big). 
\end{align*}
In particular, if
$\varepsilon_{\Psi}^\pi+\varepsilon_{G}^\pi\to0$ as $|\pi|\to0$, then
\[
    (Y^\pi(\tau(\cdot),\cdot),Z^\pi(\tau(\cdot),\cdot))
    \longrightarrow
    (Y,Z) \quad \text{in $\mathcal H^2(0,T)\times\mathcal H_\Delta^2(0,T)$.}
\]
\end{theorem}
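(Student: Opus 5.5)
We will prove Theorem \ref{thm:convergence-BSDE-system-BSVIE} by regarding both $(Y,Z)$ and the piecewise constant pair $(\tilde Y,\tilde Z)(t,s)\coloneqq (Y^\pi(\tau(t),t),Z^\pi(\tau(t),s))$ as solutions of Type-I BSVIEs, and applying an $L^2$-stability argument for BSVIEs. By \eqref{eq:bsvie-satisfied-by-discrete}, $(\tilde Y,\tilde Z)$ solves the BSVIE with data $(\Psi^\pi,G^\pi)$, where $G^\pi$ is Lipschitz with constant $[G^\pi]_L$ independent of $\pi$. The underlying point is that $\tilde Y(t)$ appears both on the left side and, through $Y^\pi(\tau(s),s)$, inside the generator. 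This is exactly the Type-I structure, so the difference $\delta Y\coloneqq Y-\tilde Y$, $\delta Z\coloneqq Z-\tilde Z$ solves a linear-type BSVIE
\[
\delta Y(t)=\delta\Psi(t)+\int_t^T\Big(G(t,s,Y(s),Z(t,s))-G^\pi(t,s,\tilde Y(s),\tilde Z(t,s))\Big)ds-\int_t^T\delta Z(t,s)\,dB(s).
\]

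In the generator difference we add and subtract $G^\pi(t,s,Y(s),Z(t,s))$. This splits it into the consistency term $G-G^\pi$ evaluated along the true solution, which produces $\varepsilon_G^\pi$, and a Lipschitz term bounded by $[G^\pi]_L(|\delta Y(s)|+|\delta Z(t,s)|)$. For fixed $t$, we view the equation on $s\in[t,T]$ as a BSDE in the variable $s$ with terminal value $\delta\Psi(t)$, using $\delta Y(t,s)\coloneqq\mathbb E_s[\cdots]$ as in Theorem \ref{thm:well-posedness-BSVIE}. A standard BSDE $L^2$ estimate, obtained from Itô's formula with weight $e^{\beta s}$, then gives
\[
\mathbb E|\delta Y(t)|^2+\mathbb E\int_t^T|\delta Z(t,s)|^2ds\le C\,\mathbb E\Big[|\delta\Psi(t)|^2+\Big(\int_t^T|G-G^\pi|(t,s,Y(s),Z(t,s))ds\Big)^2+\int_t^T|\delta Y(s)|^2ds\Big].
\]
Here the $\delta Z(t,s)$ dependence is absorbed by the Lipschitz/Itô argument, because it is the same $t$-slice. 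The term $\delta Y(s)$ is a free input, not the BSDE's own $Y$-component at time $s$.

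Integrating in $t$ over $[0,T]$ yields a backward Gronwall-type inequality for $\phi(t)\coloneqq\mathbb E\int_t^T|\delta Y(s)|^2ds$. Alternatively, we avoid Gronwall by introducing the weighted norm $\mathbb E\int_0^T e^{\beta t}|\delta Y(t)|^2dt$ with $\beta$ large, in the style of Yong's contraction argument. Either route gives $\mathbb E\int_0^T|\delta Y|^2dt\le C(\varepsilon_\Psi^\pi+\varepsilon_G^\pi)$. Substituting this back bounds the $\delta Z$ term, with $C$ depending only on $T$ and the Lipschitz constant, and hence independent of $\pi$. Convergence in $\mathcal H^2\times\mathcal H^2_\Delta$ then follows immediately.

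The main obstacle is the careful treatment of the coupling. For each $t$, the slice BSDE's generator depends on $\delta Y(s)=\delta Y(s,s)$, the diagonal of other slices, rather than on its own $Y$-component. We must check that the per-slice estimate holds with a constant uniform in $t$, and we must handle measurability in $t$ so that Fubini applies. In the discrete case we also note that $\tilde Y(s)=Y^\pi(\tau(s),s)$ is well defined and progressively measurable. Finally, the vanishing diagonal condition \eqref{eq:vanish-diagonal} is needed only to match \eqref{eq:bsvie-satisfied-by-discrete} and requires no further treatment here.
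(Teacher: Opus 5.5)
Your proposal is correct and follows the paper's route. The paper likewise identifies $(Y^\pi(\tau(\cdot),\cdot),Z^\pi(\tau(\cdot),\cdot))$ as the solution of the Type-I BSVIE \eqref{eq:bsvie-satisfied-by-discrete} with data $(\Psi^\pi,G^\pi)$ and then cites the $L^2$-stability estimate of \cite[Theorem 3.7]{YongJiongmin2008Waro}; you reprove that estimate yourself via the slice-wise BSDE bound and a backward Gronwall (or weighted-norm) argument, with constants depending only on $T$ and the uniform bound on $[G^\pi]_L$.

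One technical point: to get $\big(\int_t^T|G-G^\pi|\,ds\big)^2$ rather than $\int_t^T|G-G^\pi|^2\,ds$ in the slice estimate, as needed to match $\varepsilon_G^\pi$, you need the BSDE a priori bound that also controls $\mathbb E\sup_s|\delta Y(t,s)|^2$ via BDG, not only the weighted Itô identity.
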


\section{Hölder regularity estimate for the BSDE system}\label{section:holder-regularity-bsde-system}

\subsection{Malliavin calculus for the BSDE system}

We now turn to the Hölder regularity of the martingale integrand of the approximating BSDE system. Using Malliavin calculus techniques inspired by the BSDE arguments in \cite{HuYaozhong2011MCFB}, we prove the estimate \eqref{eq:holder-estimate-discrete-intro}. We begin by stating the assumptions needed for this result.

\begin{assumption}\label{main assumption convergence rate BSVIEs, discrete setting}
    Fix $2 \leq p < \frac{q}{2}$, and let $\Pi=(\pi_n)_{n\geq1}$ be a sequence of partitions of $[0,T]$. For each $\pi\in\Pi$, let $(\Psi^{\pi},G^{\pi})$ denote the corresponding family of discrete data. We impose the following conditions.
    \begin{enumerate}[label=(\roman*)]
       \item For every $\pi \in \Pi$ and every $k\in\{0,\dots,N-1\}$, $\Psi_k^\pi \in \mathbb{D}^{2,2}$. Moreover, we have $\Psi^\pi \in L^{p}\big(\Omega; L^2(0,T) \big)$ and 
        \begin{flalign*}
       \mathfrak p_1^\Pi
        &\coloneqq
        \sup_{\pi\in\Pi}
        \sup_{\theta<\theta'}
        \frac{
        \|D_\theta\Psi^\pi-D_{\theta'}\Psi^\pi\|_{2,p}^p
        }{
        |\theta-\theta'|^{\frac p2}
        +
        |\pi|^{\frac p2}
        }
        <\infty, \qquad
        \mathfrak p_2^\Pi
        \coloneqq \sup_{\pi\in\Pi}
        \sup_{\theta\in[0,T]}
        \norm{D_\theta\Psi^\pi}_{\infty,q}
        <\infty, \\
        \mathfrak p_3^\Pi
        &\coloneqq \sup_{\pi\in\Pi}
        \sup_{\theta,\theta'\in[0,T]}
        \big\|D_{\theta'}D_\theta\Psi^\pi\big\|_{2,p}^p
        <\infty.
        \end{flalign*}

        \item The random fields $G_k^{\pi}$ have continuous first and second-order partial derivatives with respect to $y$ and $z$, which are uniformly bounded in all variables, $k$, and $\pi\in\Pi$. In addition $\sup_{\pi\in\Pi}\norm{G^{\pi,0}}_{\mathcal H_{\Delta}^q} < \infty$. We also assume that the diagonal vanishes, in the sense of \eqref{eq:vanish-diagonal}.

        \item For every $k\in\{0,\dots,N-1\}$
        and $(y,z)\in\mathbb R^2$, assume that $G_k^\pi(\cdot,y,z)$, $\partial_yG_k^\pi(\cdot,y,z)$ and $\partial_zG_k^\pi(\cdot,y,z)$ belong to $\mathbb L_a^{1,2}(t_k,T)$.
        
        Moreover, assume that, for every $\theta,\theta'\in[0,T]$, there exist non-negative random fields on $\Omega\times\Delta(0,T)$ such that, $\mathbb P$-a.s., for every
    $(t,s)\in\Delta(0,T)$ and $(y,z)\in\mathbb R^2$,
    \[
    |D_\theta G^\pi(t,s,y,z)|
    \leq
    \mathcal G_{\theta}^{1,\pi}(t,s),
    \qquad
    |D_\theta \partial_y G^\pi(t,s,y,z)|
    \leq
    \mathcal G_{\theta}^{y,\pi}(t,s),
    \]
    \[
    |D_\theta \partial_z G^\pi(t,s,y,z)|
    \leq
    \mathcal G_{\theta}^{z,\pi}(t,s),
    \qquad
    |D_{\theta'}D_\theta G^\pi(t,s,y,z)|
    \leq
    \mathcal G_{\theta,\theta'}^{2,\pi}(t,s).
    \]
    We assume that
    \begin{flalign}\label{eq:gamma_12_assumption}
        \gamma_1^\Pi
        &\coloneqq
        \sup_{\pi\in\Pi}
        \sup_{\theta\in[0,T]}
        \norm{\mathcal G_{\theta}^{1,\pi}}_{\mathcal T_\Delta^q}
        <\infty,  \quad
        \gamma_y^\Pi \coloneqq
        \sup_{\pi\in\Pi}
        \sup_{\theta\in[0,T]}
        \norm{\mathcal G_{\theta}^{y,\pi}}_{\mathcal S_\Delta^q}
        <\infty,
        \\
        \gamma_z^\Pi
        &\coloneqq
        \sup_{\pi\in\Pi}
        \sup_{\theta\in[0,T]}
        \norm{\mathcal G_{\theta}^{z,\pi}}_{\mathcal U_\Delta^q}
        <\infty,
        \quad
        \gamma_2^\Pi
        \coloneqq
        \sup_{\pi\in\Pi}
        \sup_{\theta,\theta'\in[0,T]}
        \norm{\mathcal G_{\theta,\theta'}^{2,\pi}}_{\mathcal H_\Delta^p}
        <\infty.
        \label{eq:gamma_34_assumption}
    \end{flalign}
        In addition, for every $\theta\in[0,T]$, $k$, and
        $(y,z)\in\mathbb R^2$, assume that
        $D_\theta G_k^\pi(\cdot,y,z)\in\mathbb L_a^{1,2}(t_k,T)$, and that
        $D_\theta G_k^\pi(s,\cdot,\cdot)$ has continuous partial derivatives with respect to
        $y$ and $z$, denoted by
        $\partial_yD_\theta G_k^\pi(s,y,z)$ and
        $\partial_zD_\theta G_k^\pi(s,y,z)$.
        
        Finally, assume that, for every $\theta<\theta'$, there exists a non-negative random field
        $\mathcal K_{\theta,\theta'}^\pi$ on $\Omega\times\Delta(0,T)$ such that, $\mathbb P$-a.s., for every
        $(t,s)\in\Delta(0,T)$ and $(y,z)\in\mathbb R^2$,
        \[
        |D_\theta G^\pi(t,s,y,z)-D_{\theta'}G^\pi(t,s,y,z)|
        \leq
        \mathcal K_{\theta,\theta'}^\pi(t,s).
        \]
        We assume that
        \[
        \kappa^\Pi
        \coloneqq
        \sup_{\pi\in\Pi}
        \sup_{\theta<\theta'}
        \frac{
        \mathbb E \bigg[
        \bigg(
        \sum_{k=0}^{N-1}
        \Delta_k
        \int_{t_{k+1}}^T
        \big|
        \mathcal K_{\theta,\theta'}^\pi(t_k,s)
        \big|^2\,ds
        \bigg)^{\frac p2}
        \bigg]
        }{
        |\theta-\theta'|^{\frac p2}
        +
        |\pi|^{\frac p2}
        }
        <\infty.
        \]
        Let us conclude by emphasizing that all dominating fields appearing above are understood to be piecewise constant on each interval $[t_k,t_{k+1})$ in their first time variable and to vanish on the same diagonal cell as $G^\pi$.
\end{enumerate}

\end{assumption}

\begin{remark}\label{remark:discrete-swapped-derivative-bounds}
The envelope bounds in \eqref{eq:gamma_12_assumption}--\eqref{eq:gamma_34_assumption} also control the partial derivatives of $D_\theta G^\pi$. Indeed, applying the fundamental theorem of calculus in the $y$-variable, taking the Malliavin derivative, and dividing by the increment gives
\[
    \frac{
    D_\theta G^\pi(t,s,y+h,z)-D_\theta G^\pi(t,s,y,z)
    }{h}
    =
    \frac{1}{h}
    \int_0^h
    D_\theta\partial_yG^\pi(t,s,y+u,z)\,du .
\]
The envelope bound on $D_\theta\partial_yG^\pi$ and the existence of $\partial_yD_\theta G^\pi$ then give
\[
    |\partial_yD_\theta G^\pi(t,s,y,z)|
    \leq
    \mathcal G_{\theta}^{y,\pi}(t,s),
\]
and the argument for the $z$-derivative is identical. 
\end{remark}

Throughout this subsection, we fix a partition $\pi\in\Pi$ and a family of discrete data $(\Psi^\pi,G^\pi)$ satisfying Assumption \ref{main assumption convergence rate BSVIEs, discrete setting}. All constants below are independent of $\pi$.

The next result gives the Malliavin differentiability of the solution to the approximating BSDE system and identifies $Z^\pi$ with the trace of the Malliavin derivative of $Y^\pi$. The proof follows by applying \cite[Proposition 5.3]{ElKarouiN.1997BSDE} recursively.

\begin{proposition}\label{prop:malliavin-differentiability-discrete-BSVIE}
    The BSDE system \eqref{def:BSDE-app-BSVIE} admits a unique solution $(Y^\pi, Z^\pi)$ such that, for each $k \in \{0, \dots, N-1\}$,
    $Y^\pi(t_k, \cdot)$ and $Z^\pi(t_k, \cdot)$ belong to $\mathbb{L}_a^{1,2}(t_k,T)$. Moreover, the Malliavin derivatives of the solution pair admit a version which satisfies the following linear BSDE system, of the same form as \eqref{def:BSDE-app-BSVIE}:
    \begin{flalign}
    D_\theta  Y^{\pi}(t_k, t) & =  D_\theta\Psi_k^\pi + \int_t^T \Big\{ \alpha_k^\pi(s)  D_\theta Y^{\pi}(\tau(s), s) + \beta_{k}^\pi(s) D_\theta Z^{\pi}(t_k, s)  \notag \\ & +  \big(D_\theta G_k^\pi \big) \Big(s, Y^{\pi}(\tau(s), s), Z^{\pi}(t_k, s)\Big)\Big\} ds - \int_t^T D_\theta Z^{\pi}(t_k, s) dB(s), \label{eq:DY-discrete-BSVIE} 
    \end{flalign}
    for $\theta \in [0,T]$ and $t \in [t_k, T] \cap [\theta, T]$, where
    \begin{flalign*}
            \alpha_{k}^\pi(s) &= \partial_y G_k^\pi\Big( s, Y^{\pi}(\tau(s), s), Z^{\pi}(t_k, s)\Big), \quad
            \beta_{k}^\pi(s) = \partial_{z} G_k^\pi\Big( s, Y^{\pi}(\tau(s), s), Z^{\pi}(t_k, s) \Big) .
    \end{flalign*}
    For $t \in [t_k, T] \cap [0, \theta)$, we have
    $D_\theta Y^{\pi}(t_k, t)=0$ and $D_\theta Z^{\pi}(t_k, t)=0$, the latter identity holding $dt\otimes d\mathbb P$-a.e. Finally, for each $k$, the diagonal Malliavin derivative of $Y^\pi(t_k,\cdot)$ provides a version of $Z^\pi(t_k,\cdot)$. More precisely,
    \begin{gather}\label{eq:malliavin-trace-bsde-system}
        Z^\pi(t_k, s) = D_s Y^\pi(t_k, s) \quad ds \otimes d\mathbb{P}\text{-a.e.}
    \end{gather}
\end{proposition}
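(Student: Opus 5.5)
The plan is to exploit the triangular structure of the system \eqref{def:BSDE-app-BSVIE} and prove everything by backward induction on the interval index $l=N-1,N-2,\dots,0$, applying \cite[Proposition 5.3]{ElKarouiN.1997BSDE} on each cell $[t_l,t_{l+1}]$. The key observation is that on $[t_l,t_{l+1}]$ the equation for $Y^\pi(t_k,\cdot)$, $k\le l$, has generator
\[
(r,z)\mapsto G_k^\pi\bigl(r,Y^\pi(t_l,r),z\bigr),
\]
in which $Y^\pi(t_l,\cdot)$ is an exogenous process on that cell. For $k=l$ it vanishes by \eqref{eq:vanish-diagonal}, so $Y^\pi(t_l,\cdot)$ is a pure martingale there. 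Hence, on each cell, one first solves the diagonal equation $k=l$, whose terminal value $Y^\pi(t_l,t_{l+1})$ was produced at the previous stage. One then solves the off-diagonal equations $k<l$ as standard BSDEs, with terminal values $Y^\pi(t_k,t_{l+1})$ and a generator depending only on $z$ and on the already known input. Existence and uniqueness follow from Theorem \ref{theorem:energy-estimates}, so the real content is the Malliavin differentiability and the identification \eqref{eq:DY-discrete-BSVIE}--\eqref{eq:malliavin-trace-bsde-system}.

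\emph{Induction hypothesis.} At stage $l$, assume that for every $k\le l$ the terminal value $Y^\pi(t_k,t_{l+1})$ lies in $\mathbb D^{1,2}$. At $l=N-1$ this is $\Psi_k^\pi\in\mathbb D^{2,2}\subset\mathbb D^{1,2}$.

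\emph{Diagonal equation.} For $k=l$, $Y^\pi(t_l,s)=\mathbb E_s[Y^\pi(t_l,t_{l+1})]$ on the cell. By Clark--Ocone-type arguments (El Karoui--Peng--Quenez with zero generator), $Y^\pi(t_l,\cdot)$ and $Z^\pi(t_l,\cdot)$ are in $\mathbb L_a^{1,2}(t_l,t_{l+1})$, satisfy the linear BSDE with zero driver, and $Z^\pi(t_l,s)=D_sY^\pi(t_l,s)$.

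\emph{Off-diagonal equations.} For $k<l$, set $\tilde G(r,z)=G_k^\pi(r,Y^\pi(t_l,r),z)$. We need to check the hypotheses of \cite[Proposition 5.3]{ElKarouiN.1997BSDE}: $\tilde G(\cdot,0)\in\mathbb L_a^{1,2}$, $\tilde G$ is $C^1$ in $z$ with bounded derivative, and $D_\theta\tilde G(r,z)$ is Lipschitz-dominated with square-integrable envelope. By the chain rule for random fields,
\[
D_\theta\tilde G(r,z)=(D_\theta G_k^\pi)(r,Y^\pi(t_l,r),z)+\partial_yG_k^\pi(r,Y^\pi(t_l,r),z)\,D_\theta Y^\pi(t_l,r).
\]
Both terms are controlled: the first by the envelope $\mathcal G^{1,\pi}_\theta$ from Assumption \ref{main assumption convergence rate BSVIEs, discrete setting}(iii) (with its $q$-integrability), and the second by the bounded $\partial_yG$ and the just-established $D_\theta Y^\pi(t_l,\cdot)\in\mathbb L^{1,2}$. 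The proposition then gives $Y^\pi(t_k,\cdot),Z^\pi(t_k,\cdot)\in\mathbb L_a^{1,2}(t_l,t_{l+1})$ and the linear equation for the derivatives on the cell, with coefficients $\alpha_k^\pi$ and $\beta_k^\pi$ exactly as stated, since $\tau(s)=t_l$ there. Its last part gives $D_\theta Y^\pi(t_k,t)=0$ for $t<\theta$, together with $Z^\pi(t_k,s)=D_sY^\pi(t_k,s)$. In particular $Y^\pi(t_k,t_l)\in\mathbb D^{1,2}$, which closes the induction.

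\emph{Conclusion.} Concatenating the cell equations over $l=k,\dots,N-1$ yields \eqref{eq:DY-discrete-BSVIE} on $[t_k,T]\cap[\theta,T]$, and \eqref{eq:malliavin-trace-bsde-system} holds on each cell, hence $ds\otimes d\mathbb P$-a.e.

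\emph{Main obstacle.} The delicate step is the chain rule for the composite generator $\tilde G$. One must justify that $r\mapsto G_k^\pi(r,Y^\pi(t_l,r),z)$ is Malliavin differentiable with the stated derivative, using only the random-field differentiability of $G_k^\pi$, the continuity of $\partial_yG$, $\partial_zG$ and $\partial_yD_\theta G$ (Remark \ref{remark:discrete-swapped-derivative-bounds}), and the square integrability of $D_\theta Y^\pi$ in $(\theta,r)$. I would first handle this by approximating $Y^\pi(t_l,r)$ with smooth cylindrical functionals, then pass to the limit using the uniform bounds on $\partial_yG$ and the envelopes, invoking the closability of $D$. Measurability of the concatenated versions in $\theta$, which gives a jointly measurable version, is then routine.
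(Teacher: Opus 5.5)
Your proposal is correct and is essentially the paper's argument. The paper's proof consists only of applying \cite[Proposition 5.3]{ElKarouiN.1997BSDE} recursively along the triangular structure of the system. Your backward cell-by-cell induction is a detailed version of that recursion: a martingale step on the diagonal, then the off-diagonal equations with the already computed $Y^\pi(t_l,\cdot)$ as an exogenous input, using the random-field chain rule for $\tilde G$.

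One bookkeeping point needs tightening. The $z$-Lipschitz constant of $D_\theta\tilde G$ is $\mathcal G_\theta^{z,\pi}(t_k,r)+C\,|D_\theta Y^\pi(t_l,r)|$, using the bounded $\partial_{yz}G_k^\pi$. The condition that \cite[Proposition 5.3]{ElKarouiN.1997BSDE} imposes on this constant is, as far as I recall, of fourth-moment type. Your induction hypothesis should therefore carry $L^4$-type bounds on $D_\theta Y^\pi(t_k,t_{l+1})$, not merely membership in $\mathbb D^{1,2}$. These bounds are available from $\mathfrak p_2^\Pi$, $\gamma_1^\Pi$ and $q>4$, propagated through the linear derivative BSDE at each step.
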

The following estimate gives the corresponding energy bound for the Malliavin derivatives of the approximating BSDE system. It follows by applying the a priori estimate in Theorem \ref{theorem:energy-estimates} to the system satisfied by $(D_\theta Y^\pi,D_\theta Z^\pi)$, using the envelope bound on $D_\theta G^\pi$ from Assumption
\ref{main assumption convergence rate BSVIEs, discrete setting}.

\begin{proposition}\label{prop:malliavin-energy-estimates-discrete-BSVIE}
    For every $2\leq\lambda\leq q$ and every $\theta\in[0,T]$, there exists a constant $C>0$, independent of $\pi$ and $\theta$, such that
    \begin{flalign*}
        \mathbb{E} \Bigg[
        \bigg( 
        \int_{0}^{T} 
        |D_\theta Y^{\pi}(\tau(t),t)|^2\,dt 
        \bigg)^{\frac{\lambda}{2}}
        +
        \bigg(
        \sum_{k=0}^{N-1}
        \Delta_k
        \int_{t_{k+1}}^T 
        |D_\theta Z^{\pi}(t_k,s)|^2\,ds
        \bigg)^{\frac{\lambda}{2}}
        \Bigg] 
        \leq
        C \big(
        \norm{D_\theta \Psi^\pi}_{2,\lambda}^{\lambda}
        +
        \norm{\mathcal G_\theta^{1,\pi}}_{\mathcal{H}_{\Delta}^{\lambda}}^{\lambda}
        \big).
    \end{flalign*}
\end{proposition}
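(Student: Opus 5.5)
Fix $\theta\in[0,T]$ and view $(D_\theta Y^\pi,D_\theta Z^\pi)$ as the solution of the linear BSDE system \eqref{eq:DY-discrete-BSVIE}. This system has the same structure as \eqref{def:BSDE-app-BSVIE}, posed on $[t_k,T]\cap[\theta,T]$, so the natural route is to apply the a priori estimate of Theorem \ref{theorem:energy-estimates} with $p$ replaced by $\lambda$. To do so, we identify the discrete data of this new system:
\[
\widetilde\Psi^\pi_k \coloneqq D_\theta\Psi^\pi_k,
\qquad
\widetilde G^\pi_k(s,y,z)\coloneqq \alpha^\pi_k(s)\,y+\beta^\pi_k(s)\,z+\big(D_\theta G^\pi_k\big)\big(s,Y^\pi(\tau(s),s),Z^\pi(t_k,s)\big).
\]
By Assumption \ref{main assumption convergence rate BSVIEs, discrete setting}(ii), $\alpha^\pi_k$ and $\beta^\pi_k$ are bounded by the uniform bound $L$ on $\partial_yG^\pi$ and $\partial_zG^\pi$. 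Hence $\widetilde G^\pi$ is uniformly Lipschitz in $(y,z)$ with constant $L$, independent of $\pi$, $k$ and $\theta$. Since $G^\pi_k$, and therefore $\alpha^\pi_k$, $\beta^\pi_k$ and $D_\theta G^\pi_k$, vanish on the diagonal cell $[t_k,t_{k+1})$, the new generator also satisfies \eqref{eq:vanish-diagonal}.

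The key point is the zero-order term. Because $|D_\theta G^\pi(t,s,y,z)|\le \mathcal G^{1,\pi}_\theta(t,s)$ holds for all $(y,z)$, we get
\[
|\widetilde G^{\pi,0}(t,s)| = \big|(D_\theta G^\pi)(t,s,Y^\pi(\tau(s),s),Z^\pi(\tau(t),s))\big|\le \mathcal G^{1,\pi}_\theta(t,s).
\]
This bound holds regardless of the random evaluation point, so $\|\widetilde G^{\pi,0}\|_{\mathcal H^\lambda_\Delta}\le \|\mathcal G^{1,\pi}_\theta\|_{\mathcal H^\lambda_\Delta}$. The right-hand side is finite for $\lambda\le q$, since $\|\cdot\|_{\mathcal H^q_\Delta}\le T^{1/2}\|\cdot\|_{\mathcal T^q_\Delta}$ and $\gamma^\Pi_1<\infty$. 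Likewise, $\|D_\theta\Psi^\pi\|_{2,\lambda}\le T^{1/2}\|D_\theta\Psi^\pi\|_{\infty,q}\le T^{1/2}\mathfrak p^\Pi_2$, which is finite. Theorem \ref{theorem:energy-estimates} then yields the claimed bound, and its constant depends only on $\lambda$, $T$ and $L$, hence not on $\pi$ or $\theta$.

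One technical point remains, and it is the only place I expect care to be needed: the system lives on $[\theta,T]$ rather than on $[0,T]$. I would handle this by setting $D_\theta Y^\pi$ and $D_\theta Z^\pi$ to zero for $t<\theta$, as in Proposition \ref{prop:malliavin-differentiability-discrete-BSVIE}. The partial cell containing $\theta$ is treated by restricting the time horizon of each BSDE to $[t_k\vee\theta,T]$. The coupling through $D_\theta Y^\pi(\tau(s),s)$ with $s\ge\theta$ only involves indices $l$ with $t_l\le s$, so the recursive structure, and hence the proof of Theorem \ref{theorem:energy-estimates}, goes through on the restricted horizon. Some care is also needed in checking that $\alpha_k^\pi$ and $\beta_k^\pi$ are progressively measurable. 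This follows from the progressive measurability of $G^\pi$, its derivatives and the solution, so the linear system is admissible data for Theorem \ref{theorem:energy-estimates}.
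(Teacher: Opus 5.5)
Your proposal is correct and follows the paper's argument: Theorem \ref{theorem:energy-estimates} is applied with exponent $\lambda$ to the linear system \eqref{eq:DY-discrete-BSVIE}. As you say, the Lipschitz constant is controlled by the uniform bounds on $\partial_yG^\pi$ and $\partial_zG^\pi$, and the zero-order term is dominated by the envelope $\mathcal G_\theta^{1,\pi}$ at any random evaluation point.

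For the horizon issue, you do not need to re-run the proof of Theorem \ref{theorem:energy-estimates} on $[t_k\vee\theta,T]$. Multiply the linear generator by $\mathbf 1_{[\theta,T]}(s)$ and solve the system on all of $[0,T]$. By uniqueness, its solution coincides with $(D_\theta Y^\pi,D_\theta Z^\pi)$ on $[\theta,T]$. Since $D_\theta Y^\pi$ and $D_\theta Z^\pi$ vanish before $\theta$, the left-hand side is dominated by the energy of the extended solution, and Theorem \ref{theorem:energy-estimates} applies verbatim with the same constant.
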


We are now ready to state the main result of this section, which gives a uniform $L^2$-Hölder regularity estimate for the martingale component of the approximating BSDE system. The key point is that the constant is independent of the partition, which is essential for passing to the limit as $|\pi|\to0$.

\begin{theorem}\label{thm:holder-regularity-system-BSDEs}
    There exists a constant $K>0$, independent of $\pi$,
    such that, for every $\pi\in\Pi$, one can choose a version of $Z^\pi$ satisfying
    \begin{gather*}
        \mathbb{E}\bigg[
        \bigg(
        \sum_{k\colon t_k \leq s \wedge r} \Delta_k
        | Z^{\pi}(t_k, s) - Z^\pi (t_k, r) |^2
        \bigg)^{\frac p2}
        \bigg]
        \leq
        K \big( |s-r|^{\frac p2} + |\pi|^{\frac p2} \big),
        \quad
       \forall s,r\in[0,T].
    \end{gather*}
\end{theorem}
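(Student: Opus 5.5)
Following the strategy of \cite{HuYaozhong2011MCFB}, we work with the version of $Z^\pi$ given by the trace identity \eqref{eq:malliavin-trace-bsde-system}, $Z^\pi(t_k,s)=D_sY^\pi(t_k,s)$. Fix $r<s$ with $t_k\le r$. The linear system \eqref{eq:DY-discrete-BSVIE} is driven by the same Brownian motion for every $\theta$. Evaluating it at $t=s$ for $\theta=s$ and $\theta=r$ gives
\[
Z^\pi(t_k,s)-Z^\pi(t_k,r)=\big(D_sY^\pi(t_k,s)-D_rY^\pi(t_k,s)\big)+\big(D_rY^\pi(t_k,s)-D_rY^\pi(t_k,r)\big).
\]
We estimate the two brackets separately, in the averaged norm $\mathbb E[(\sum_{k:t_k\le r}\Delta_k|\cdot|^2)^{p/2}]$.

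\emph{Second bracket (time increment at fixed $\theta=r$).} From \eqref{eq:DY-discrete-BSVIE},
\[
D_rY^\pi(t_k,s)-D_rY^\pi(t_k,r)=-\int_r^s\{\alpha_k^\pi D_rY^\pi(\tau(u),u)+\beta_k^\pi D_rZ^\pi(t_k,u)+(D_rG_k^\pi)(\cdots)\}\,du+\int_r^s D_rZ^\pi(t_k,u)\,dB(u).
\]
The drift contributes $|s-r|^{p/2}$ by Cauchy--Schwarz. For the $D_rY$ term we use a sup-in-time bound on $\sum_k\Delta_k|D_rY^\pi(t_k,u)|^2$; this is the analogue, for the derivative system, of the first term of Theorem \ref{theorem:energy-estimates}. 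The $D_rG$ term is controlled by $\gamma_1^\Pi$. The remaining difficulty is to show $\mathbb E[(\sum_k\Delta_k\int_r^s|D_rZ^\pi(t_k,u)|^2du)^{p/2}]\le C|s-r|^{p/2}$, together with the same bound for the stochastic integral via BDG. For this we differentiate once more: since $D_rZ^\pi(t_k,u)=D_uD_rY^\pi(t_k,u)$, and $(D_{\theta'}D_\theta Y^\pi, D_{\theta'}D_\theta Z^\pi)$ solves a linear system of the same type, Theorem \ref{theorem:energy-estimates} bounds $\sum_k\Delta_k|D_uD_rY^\pi(t_k,u)|^2$ uniformly in $u$ in $L^{p/2}$. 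The inputs are $\mathfrak p_3^\Pi$ and $\gamma_2^\Pi$, plus terms involving second derivatives of $G$ multiplied by products $D_\theta Y\cdot D_{\theta'}Y$ and $D_\theta Z\cdot D_{\theta'}Z$. These products are handled by Hölder's inequality, using $q>2p$ together with $\mathfrak p_2^\Pi$ and $\gamma_y^\Pi,\gamma_z^\Pi$; this is exactly why the envelope norms are of type $\mathcal S_\Delta,\mathcal U_\Delta$ and the moments are of order $q$. Integrating over $u\in[r,s]$ then gives $|s-r|$ inside the power.

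\emph{First bracket ($\theta$-increment at fixed time).} Set $\delta^{\theta,\theta'}=D_\theta Y^\pi-D_{\theta'}Y^\pi$ for $\theta=r<\theta'=s$, on $[s,T]$. It solves a linear system of the same form with terminal datum $D_r\Psi^\pi-D_s\Psi^\pi$ and inhomogeneity $(D_rG_k^\pi-D_sG_k^\pi)(\cdots)$, which is bounded by $\mathcal K^\pi_{r,s}$. The stability estimate Lemma \ref{lemma:stability-estimate-system-BSDE}, in its sup-in-$s$ form, gives
\[
\mathbb E\Big[\Big(\sum_{k:t_k\le s}\Delta_k|\delta^{r,s}(t_k,s)|^2\Big)^{p/2}\Big]\le C(\mathfrak p_1^\Pi+\kappa^\Pi)(|s-r|^{p/2}+|\pi|^{p/2}).
\]
Here one uses $\alpha_k^\pi,\beta_k^\pi$ bounded, and that the coefficients are identical for both $\theta$ because they depend only on $(Y^\pi,Z^\pi)$.

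The main obstacle is the second-order Malliavin step for $D_rZ^\pi$. It requires justifying twice-differentiability of the system, which we obtain by applying \cite[Proposition 5.3]{ElKarouiN.1997BSDE} recursively to the derivative system using Assumption \ref{main assumption convergence rate BSVIEs, discrete setting}(iii) and Remark \ref{remark:discrete-swapped-derivative-bounds}. It also requires controlling the quadratic cross terms with constants uniform in $\pi$ and $\theta$. A further technical point is the cells with $t_k\le r<t_{k+1}$. There the generator vanishes by \eqref{eq:vanish-diagonal}, and these cells contribute at most $|\pi|$, via the single $\Delta_k$ weight and the uniform bounds; this is one source of the $|\pi|^{p/2}$ defect. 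Finally, combining the two brackets and the symmetric case $s<r$ yields the claim, with $K$ depending only on the constants of the assumption.
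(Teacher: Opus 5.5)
\emph{Verdict: there is a genuine gap.} Your decomposition of $Z^\pi(t_k,s)-Z^\pi(t_k,r)$ is the paper's, and so is your treatment of the first bracket via Lemma~\ref{lemma:stability-estimate-system-BSDE} with $\mathfrak p_1^\Pi,\kappa^\Pi$. The second bracket, however, fails at exactly the step you flag as the main obstacle.

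Differentiating \eqref{eq:DY-discrete-BSVIE} a second time, with Malliavin parameter $u$, produces through $D_u\beta_k^\pi$ the inhomogeneity
\[
\partial_{zz}G_k^\pi\big(v,Y^\pi(\tau(v),v),Z^\pi(t_k,v)\big)\,D_uZ^\pi(t_k,v)\,D_rZ^\pi(t_k,v),
\]
among other products. To feed it into Theorem~\ref{theorem:energy-estimates} you need
\[
\mathbb E\bigg[\bigg(\sum_{k=0}^{N-1}\Delta_k\int_{t_{k+1}}^T|D_uZ^\pi(t_k,v)|^2\,|D_rZ^\pi(t_k,v)|^2\,dv\bigg)^{\frac p2}\bigg]\le C
\]
uniformly in $\pi,u,r$. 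The energy estimates (Proposition~\ref{prop:malliavin-energy-estimates-discrete-BSVIE}) control both factors only in $L^2(dv)$. H\"older's inequality in $\omega$, whatever the relation between $p$ and $q$, cannot supply the missing $L^4$- or $L^\infty$-integrability in time.

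The quantities $\mathfrak p_2^\Pi,\gamma_y^\Pi,\gamma_z^\Pi$ do not even enter this term. Its coefficient is just the bounded $\partial_{zz}G_k^\pi$, and both factors are solution components, not data envelopes. Closing the estimate would need a pointwise-in-time bound on $D_rZ^\pi(t_k,\cdot)$, which is at least as strong as what this step is meant to prove, so the argument is circular.

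Switching to an $L^1$-in-time a priori estimate does not rescue it. Let $A_k,B_k$ be the $L^2(dv)$-norms of $D_uZ^\pi(t_k,\cdot)$ and $D_rZ^\pi(t_k,\cdot)$. You would be left with $\sum_k\Delta_kA_kB_k$, which needs control of $\max_kB_k$. The $\Delta_k$-weighted estimates do not give this. The same integrability issue also makes the existence of the second-order system via \cite[Proposition 5.3]{ElKarouiN.1997BSDE} non-trivial.

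The paper never differentiates the solution twice. It represents
\[
D_rY^\pi(t_k,t)=\mathbb E_t\Big[\sum_jU^\pi(t,T)_{k,j}D_r\Psi_j^\pi+\int_t^T\sum_jU^\pi(t,a)_{k,j}\,\mathcal C_r^\pi(a)_j\,da\Big]
\]
through the fundamental matrix and splits the time increment into $I_1,\dots,I_5$. The martingale increments $I_1,I_3$ are handled by Clark--Ocone and BDG in $(\mathbb R^N,\|\cdot\|_\Delta)$. As a result, only $D_uU^\pi$, $D_uD_r\Psi^\pi$ and $D_u\mathcal C_r^\pi$ appear.

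By the chain rule, $D_uU^\pi$ is linear in $D_u\alpha^\pi,D_u\beta^\pi$, hence in $D_uY^\pi,D_uZ^\pi$. After BDG it is controlled in conditional $L^\lambda$, with $\lambda=q/(q-1)<2$, by $L^2(dv)$-norms only (Lemmas~\ref{lemma:energy-Dalpha-Dbeta}--\ref{lemma:bound_D_theta_off_diagonal_rho1}). It is then paired, by conditional H\"older with exponents $\lambda$ and $q$, with the \emph{data} factors $D_r\Psi^\pi$ and $\mathcal C_r^\pi$. In $J_6$, $D_uY^\pi$ and $D_uZ^\pi$ are likewise paired with the data envelopes $\mathcal G_r^{y,\pi}$ and $\mathcal G_r^{z,\pi}$. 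All of these carry sup-type $L^q$ bounds: $\mathfrak p_2^\Pi$, $\gamma_1^\Pi$, $\gamma_y^\Pi$, $\gamma_z^\Pi$.

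This is where the sup-type norms and $p<q/2$ (through $\eta=pq/(q-p)<q$) are actually used. The factor that must be bounded uniformly in time is always a datum, never $D_rZ^\pi$. The remaining terms $I_2,I_4,I_5$ follow from Lemmas~\ref{lemma:bound_U_delta} and~\ref{lemma:time-regularity-U}.
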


The proof is given at the end of this section. Before turning to it, we first derive an explicit representation of $Z^\pi$ and collect the auxiliary estimates that will be used in the argument.

\subsection{An explicit representation of \(Z^\pi\) via Malliavin calculus}

We now derive a representation formula for $Z^\pi$. Using the Malliavin trace identity from the previous subsection, it is enough to represent $D_\theta Y^\pi$. The argument follows the same general strategy as in \cite{HuYaozhong2011MCFB}: we rewrite the Malliavin-differentiated equations as a linear multi-dimensional BSDE system and then apply a variation-of-constants formula.

Fix $\theta\in[0,T]$. For $k\in\{0,\dots,N-1\}$ and $l\in\{k+1,\dots,N-1\}$, define
\begin{flalign*}
    \alpha_{k,l}^\pi(u)
    &\coloneqq
    \partial_y G_k^\pi
    \Big(
    u,Y^\pi(t_l,u),Z^\pi(t_k,u)
    \Big)
    \mathbf 1_{[t_l,t_{l+1})}(u), \qquad
    \beta_k^\pi(u)
    \coloneqq
    \partial_zG_k^\pi
    \Big(
    u,Y^\pi(\tau(u),u),Z^\pi(t_k,u)
    \Big)
    \mathbf 1_{[t_{k+1},T]}(u).
\end{flalign*}
Then \eqref{eq:DY-discrete-BSVIE} can be written, for $t \in [t_k,T] \cap [\theta,T]$, as
\begin{flalign*}
    D_\theta Y^\pi(t_k,t)
    &=
    D_\theta\Psi_k^\pi
    +
    \int_t^T
    \bigg\{
    \sum_{l=k+1}^{N-1}
    \alpha_{k,l}^\pi(u)
    D_\theta Y^\pi(t_l,u)
    +
    \beta_k^\pi(u)
    D_\theta Z^\pi(t_k,u)
    \\
    &\qquad\quad
    +
    D_\theta G_k^\pi
    \Big(
    u,Y^\pi(\tau(u),u),Z^\pi(t_k,u)
    \Big)
    \bigg\}\,du
    -
    \int_t^T
    D_\theta Z^\pi(t_k,u)\,dB(u).
\end{flalign*}
It is convenient to view this family of equations in matrix notation. We extend $D_\theta Y^\pi(t_k,u)$ and $D_\theta Z^\pi(t_k,u)$ by zero for $u<t_k$, and set
\begin{flalign*}
    D_\theta Y^{\pi} (u) &= \begin{pmatrix}
        D_\theta Y^{\pi}(t_0, u) & \cdots & D_\theta Y^{\pi}(t_{N-1}, u)
    \end{pmatrix}^T \in \mathbb{R}^{N}, \\
    D_\theta Z^{\pi} (u) &= \begin{pmatrix}
        D_\theta Z^{\pi}(t_0, u) & \cdots & D_\theta Z^{\pi}(t_{N-1}, u)
    \end{pmatrix}^T \in \mathbb{R}^{N}, \\
    D_\theta \Psi^\pi &= \begin{pmatrix}
        D_\theta \Psi_0^\pi & \cdots & D_\theta \Psi_{N-1}^\pi
    \end{pmatrix}^T \in \mathbb{R}^{N}.
\end{flalign*}
The coefficient matrices are given by
\begin{gather*}
  \mathcal A^\pi(u)_{k,j}
    =
    \begin{cases}
        \alpha_{k,j}^\pi(u), & j\geq k+1, \\
        0, & j\leq k,
    \end{cases}
    \qquad
    \mathcal B^\pi(u)_{k,j}
    =
    \begin{cases}
        \beta_k^\pi(u), & j=k, \\
        0, & j\neq k,
    \end{cases}
\end{gather*}
for $k,j\in\{0,\dots,N-1\}$. The source term $\mathcal C_\theta^\pi(u) = \big(\mathcal C_\theta^\pi(u)_k\big)_{0\leq k\leq N-1}$ is given by
\begin{gather*}
    \mathcal C_\theta^\pi(u)_k
    =
    \big(D_\theta G_k^\pi\big)
    \Big(u,Y^\pi(\tau(u),u),Z^\pi(t_k,u)\Big)
    \mathbf 1_{[t_{k+1},T]}(u).
\end{gather*}
With this notation, the differentiated equations are represented by the following vector relation, whose $k$th component is understood only for $t\in[t_k,T]\cap[\theta,T]$:
\begin{flalign*}
    D_\theta Y^{\pi} (t)
    =
    D_\theta \Psi^\pi
    +
    \int_t^T
    \big\{
    \mathcal A^\pi(u) D_\theta Y^{\pi} (u)
    +
    \mathcal B^\pi(u) D_\theta Z^{\pi} (u)
    +
    \mathcal C_{\theta}^\pi(u)
    \big\}\,du
    -
    \int_t^T D_\theta Z^{\pi} (u)\,dB(u).
\end{flalign*}
We now introduce the fundamental matrix associated with this linear system. Let $\Phi^\pi$ be the $N\times N$-dimensional solution of the linear SDE system
\begin{gather*}
    d \Phi^\pi(t)
    =
    \Phi^\pi(t)
    \big[
    \mathcal A^\pi(t)\,dt
    +
    \mathcal B^\pi(t)\,dB(t)
    \big],
    \qquad
    \Phi^\pi(0)=I_N.
\end{gather*}
Set
\[
    U^\pi(t,u)
    \coloneqq
    \Phi^\pi(t)^{-1}\Phi^\pi(u),
    \qquad
    0\leq t\leq u\leq T.
\]
The triangular structure and the supports of the coefficients imply that $U^\pi(t,u)_{k,j}=0$ for $j>k$ and $u\leq t_j$. Hence, the zero extensions do not contribute to the componentwise variation-of-constants argument, and, for $t\in[t_k,T]\cap[\theta,T]$, we obtain
\begin{flalign*}
    D_\theta Y^\pi(t_k,t)
    =
    \mathbb E_t\Bigg[
        \sum_{j=k}^{N-1}
        U^\pi(t,T)_{k,j}
        D_\theta\Psi_j^\pi
        +
        \int_t^T
        \sum_{j=k}^{N-1}
        U^\pi(t,u)_{k,j}
        \mathcal C_\theta^\pi(u)_j\,du
    \Bigg].
\end{flalign*}
In the sequel, using \eqref{eq:malliavin-trace-bsde-system}, we take the version of $Z^\pi$ obtained by setting $\theta=t$ in the preceding representation formula.

\begin{remark}
Since $\mathcal A^\pi$ is strictly upper triangular and $\mathcal B^\pi$ is diagonal, the fundamental matrix $U^\pi(t,u)$ is upper triangular. Hence,
$U^\pi(t,u)_{k,j}=0$ for $j<k$.

For $j\geq k$, its entries satisfy
\begin{flalign*}
    d U^\pi(t,u)_{k,j}
    &=
    \Big(
    \sum_{l=k}^{j-1}
    \alpha_{l,j}^{\pi}(u)
    U^\pi(t,u)_{k,l}
    \Big)\,du
    +
    \beta_j^{\pi}(u)
    U^\pi(t,u)_{k,j}\,dB(u), \qquad U^\pi(t,t)_{k,j}
    =
    \delta_{kj}.
\end{flalign*}
In particular,
\begin{gather}\label{eq:U-diagonal-expression}
    U^\pi(t,u)_{k,k}
    =
    \exp\Big(
    \int_t^u \beta_k^{\pi}(v)\,dB(v)
    -
    \frac{1}{2}\int_t^u \beta_k^{\pi}(v)^2\,dv
    \Big).
\end{gather}
For $j\geq k+1$, variation of constants and the support of
$\alpha_{l,j}^{\pi}$ yield
\begin{gather}\label{eq:equation-U-offdg}
    U^\pi(t,u)_{k,j}
    =
    \int_{[t,u]\cap[t_j,t_{j+1})}
    \mathcal E_j^\pi(v,u)
    \sum_{l=k}^{j-1}
    \alpha_{l,j}^{\pi}(v)
    U^\pi(t,v)_{k,l}\,dv,
\end{gather}
where $\mathcal E_j^\pi(v,u)\coloneqq U^\pi(v,u)_{j,j}$.
\end{remark}

\subsection{Estimates for the Fundamental Matrix}\label{subsec:estimates-fundamental-matrix}

We now state the key estimates for the fundamental matrix $U^\pi$ that will be used in the proof of Theorem \ref{thm:holder-regularity-system-BSDEs}. The proofs are given in \ref{section:appendix-proofs-fundamental-matrix}.

The first estimate reflects the triangular structure of the system: diagonal entries of $U^\pi$ are of order one, whereas off-diagonal entries carry one factor of the corresponding mesh size.

\begin{lemma}\label{lemma:bound_U_delta}
Let $\lambda>1$. There exists a constant $C>0$, independent of $\pi$ and $t$, such that, for all $0\leq k\leq j\leq N-1$,
\begin{gather}\label{eq:U-entry-bound}
     \mathbb{E}\Big[\sup_{t \leq u \leq T} |U^\pi(t,u)_{k,j}|^{\lambda} \ \big| \ \mathcal{F}_{t}\Big]^{\frac{1}{\lambda}}
    \leq
    C\,\mathbf{1}_{\{k=j\}}
    + C\,\Delta_j\,\mathbf{1}_{\{k<j\}}, \quad \text{$\mathbb{P}$-a.s.}
\end{gather} 
\end{lemma}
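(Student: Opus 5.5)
Our plan is to handle the diagonal and off-diagonal entries separately, and to obtain the off-diagonal bound by induction on $j-k$ using the representation \eqref{eq:equation-U-offdg}.

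\textbf{Diagonal entries.} By \eqref{eq:U-diagonal-expression}, $U^\pi(t,\cdot)_{k,k}$ is a stochastic exponential with integrand $\beta_k^\pi$. Assumption \ref{main assumption convergence rate BSVIEs, discrete setting}(ii) gives $|\beta_k^\pi|\leq[G^\pi]_L$ uniformly in $\pi$ and $k$. We write
\[
|U^\pi(t,u)_{k,k}|^\lambda=\mathcal M(t,u)\exp\Big(\tfrac{\lambda^2-\lambda}{2}\int_t^u(\beta_k^\pi)^2dv\Big),
\]
where $\mathcal M(t,\cdot)$ is the exponential martingale of $\lambda\beta_k^\pi$. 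The exponential factor is bounded by $e^{C T}$. For $\lambda>1$, Doob's maximal inequality applied conditionally on $\mathcal F_t$ to the positive martingale $\mathcal M(t,\cdot)^{1/\lambda'}$, with exponent $\lambda'>1$ chosen accordingly, then gives
\[
\mathbb E[\sup_u|U_{k,k}|^\lambda\mid\mathcal F_t]\le C.
\]
The same bound holds for $\mathcal E_j^\pi(v,u)$ with starting point $v$, and it is uniform in $v$. Alternatively, one can apply BDG to the linear SDE together with Gronwall.

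\textbf{Off-diagonal entries.} We induct on $j-k\geq1$. By \eqref{eq:equation-U-offdg}, the integration domain $[t,u]\cap[t_j,t_{j+1})$ has length at most $\Delta_j$, so
\[
\sup_u|U_{k,j}(t,u)|\le \Delta_j\,C\sup_{v\in[t_j,t_{j+1}),\,u\geq v}|\mathcal E^\pi_j(v,u)|\sum_{l=k}^{j-1}\sup_v|U_{k,l}(t,v)|.
\]
Hölder's inequality with conditional expectations, together with the bound $|\alpha|\le C$, reduces the problem to controlling $\mathbb E[\sup_{v,u}|\mathcal E_j(v,u)|^{2\lambda}\mid\mathcal F_t]$. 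For $v\in[t_j,t_{j+1})$ we write $\mathcal E_j(v,u)=U_{j,j}(t_j\vee t,u)/U_{j,j}(t_j\vee t,v)$. Inverse stochastic exponentials with bounded integrands also have all moments, so this quantity is bounded.

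\textbf{Main obstacle.} The delicate point is the sum over $l$. Using the induction hypothesis naively gives
\[
\sum_{l=k}^{j-1}\big(\mathbf 1_{l=k}+\Delta_l\big)\le 1+T,
\]
which is bounded. However, the hypothesis must be applied at a higher moment, roughly $2\lambda$ via Cauchy--Schwarz. Repeating this doubles the exponent at each step, so the constant would depend on $j-k$ and could blow up with $N$.

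To avoid this, we do not induct. Instead we work with the row sum $S_k(t,u)\coloneqq\sum_{l>k}|U_{k,l}(t,u)|$, or better the whole row vector. Applying Itô's formula to a norm of the row $(U_{k,l})_{l>k}$ should give a linear system with bounded drift, because $\sum_j|\alpha_{l,j}(u)|$ has at most one nonzero $j$ at each $u$, namely the $j$ with $u\in[t_j,t_{j+1})$. Gronwall then yields
\[
\mathbb E[\sup_u(\textstyle\sum_{l\geq k}|U_{k,l}|)^\lambda\mid\mathcal F_t]\le C
\]
uniformly in $N$.

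With this uniform control of the row sum, we return to \eqref{eq:equation-U-offdg} for a fixed $j$. Its integrand is bounded by
\[
C\,|\mathcal E_j|\,\sum_l|U_{k,l}|\,\mathbf 1_{[t_j,t_{j+1})}.
\]
A single application of Hölder's inequality, with no iteration, then gives the factor $\Delta_j$ with a constant independent of $\pi$. This completes the bound \eqref{eq:U-entry-bound}.
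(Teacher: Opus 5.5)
Your proposal is correct in outline, but for the off-diagonal entries it takes a different and heavier route than the paper. The paper stays at the fixed exponent $\lambda$ throughout. It sets $m_{k,j}\coloneqq\mathbb E[\sup_{t\le u\le T}|U^\pi(t,u)_{k,j}|^\lambda\mid\mathcal F_t]^{1/\lambda}$ and bounds $\sup_u|U^\pi(t,u)_{k,j}|$ by $\int_t^T\mathbf 1_{[t_j,t_{j+1})}(v)\sup_{u\ge v}|\mathcal E^\pi_j(v,u)|\,|X_v|\,dv$, where $X_v\coloneqq\sum_{l=k}^{j-1}\alpha^\pi_{l,j}(v)U^\pi(t,v)_{k,l}$. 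It then applies conditional Minkowski in $v$ and conditions on $\mathcal F_v$ instead of using H\"older. Since $X_v$ is $\mathcal F_v$-measurable and $\mathbb E[\sup_{u\ge v}|\mathcal E^\pi_j(v,u)|^\lambda\mid\mathcal F_v]\le C$, this gives $m_{k,j}\le C\Delta_j\sum_{l=k}^{j-1}m_{k,l}$. With $m_{k,k}\le C$, the discrete Gronwall inequality for $a_j\coloneqq m_{k,j}/\Delta_j$ gives $a_j\le C$ uniformly in $N$. So the exponent doubling you single out as the main obstacle comes only from using Cauchy--Schwarz. The entrywise induction you abandoned works once H\"older is replaced by the tower property, and discrete Gronwall absorbs the growth of constants along the induction.

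Your alternative is also valid: uniform moments for the whole row, then a single H\"older step, which needs those moments at exponent $2\lambda$. It has the merit of producing the uniform row-sum bound directly. However, the step you leave as ``should give'' needs two precisions.
\begin{itemize}
\item The norm must be the $\ell^1$ row-sum norm. Acting on row vectors, $\mathcal A^\pi(u)$ has $\ell^1$ operator norm $\max_l\sum_j|\alpha^\pi_{l,j}(u)|\le C$, which is exactly your support observation. Its Euclidean operator norm can be of order $\sqrt N$, because for $u\in[t_m,t_{m+1})$ its only nonzero column is $(\alpha^\pi_{l,m}(u))_{l<m}$.
\item Since $|x|$ is not $C^2$, Itô's formula does not apply directly; use Tanaka's formula instead. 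The diffusion coefficient $\beta^\pi_jU^\pi(t,\cdot)_{k,j}$ vanishes at $0$, so the occupation-density formula shows the local time at $0$ is zero. Hence $S\coloneqq\sum_{l\ge k}|U^\pi(t,\cdot)_{k,l}|$ satisfies $dS=b\,du+\sigma\,dB$ with $|b|,|\sigma|\le CS$. Conditional BDG and Gronwall then give all conditional moments of $\sup_uS$ uniformly in $N$; these moments are a priori finite for fixed $N$.
\end{itemize}

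Finally, in the diagonal part, $\mathcal M^{1/\lambda'}$ is a supermartingale, so Doob's inequality does not apply to it. Apply Doob's $L^\lambda$ inequality to the martingale $U^\pi(t,\cdot)_{k,k}$ itself. Use your factorization only to show $\mathbb E[|U^\pi(t,T)_{k,k}|^\lambda\mid\mathcal F_t]\le e^{CT}$.
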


We also need bounds on the Malliavin derivatives of the linear coefficients appearing in the differentiated system.

\begin{lemma}\label{lemma:energy-Dalpha-Dbeta}
Let $u \in [0,T]$. Then, for every $2 \leq \lambda \leq q$, there exists a constant $C>0$, independent of $u$ and $\pi$, such that
\begin{flalign*}
    \mathbb E\bigg[
    \bigg(
    \sum_{k=0}^{N-1}
    \Delta_k
    \int_{t_{k+1}}^T  |D_u\alpha_k^\pi(s)|^2\,ds
    \bigg)^{\frac{\lambda}{2}}
    \bigg]^{\frac1\lambda} 
    +&
    \mathbb E\bigg[
    \bigg(
    \sum_{k=0}^{N-1}
    \Delta_k
    \int_{t_{k+1}}^T |D_u\beta_k^\pi(s)|^2\,ds
    \bigg)^{\frac{\lambda}{2}}
    \bigg]^{\frac1\lambda}
    \leq
    C \Big\{
    \mathfrak p_2^\Pi
    +
    \gamma_1^\Pi
    +
    \gamma_y^\Pi
    +
    \gamma_z^\Pi
    \Big\}.
\end{flalign*}
\end{lemma}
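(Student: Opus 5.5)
The plan is to compute $D_u\alpha_k^\pi(s)$ and $D_u\beta_k^\pi(s)$ by the chain rule and then bound each resulting term, using the envelope assumptions and the Malliavin energy estimate of Proposition \ref{prop:malliavin-energy-estimates-discrete-BSVIE}.

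\textbf{Step 1: chain rule.} Since $\partial_yG_k^\pi$ has bounded continuous derivatives in $(y,z)$ and $\partial_yG_k^\pi(\cdot,y,z)\in\mathbb L_a^{1,2}$, the chain rule for random fields gives, for $s\geq t_{k+1}$,
\begin{flalign*}
D_u\alpha_k^\pi(s) = (D_u\partial_yG_k^\pi)(s,\Theta_k(s)) + \partial_{yy}G_k^\pi(s,\Theta_k(s))\,D_uY^\pi(\tau(s),s) + \partial_{yz}G_k^\pi(s,\Theta_k(s))\,D_uZ^\pi(t_k,s),
\end{flalign*}
where $\Theta_k(s)=(Y^\pi(\tau(s),s),Z^\pi(t_k,s))$. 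The formula for $D_u\beta_k^\pi$ is identical with $\partial_y$ replaced by $\partial_z$. I would justify this by approximating the random field by smooth cylindrical fields, as in \cite{ElKarouiN.1997BSDE}. Alternatively, I would invoke it as the standard chain rule already implicit in Proposition \ref{prop:malliavin-differentiability-discrete-BSVIE}.

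\textbf{Step 2: the terms involving $D_uY^\pi$ and $D_uZ^\pi$.} The second derivatives of $G^\pi$ are uniformly bounded, so these terms are controlled by constants times two quantities. The first is
\[
\sum_k\Delta_k\int_{t_{k+1}}^T|D_uY^\pi(\tau(s),s)|^2ds\leq T\int_0^T|D_uY^\pi(\tau(s),s)|^2ds .
\]
The second is $\sum_k\Delta_k\int_{t_{k+1}}^T|D_uZ^\pi(t_k,s)|^2ds$.

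Proposition \ref{prop:malliavin-energy-estimates-discrete-BSVIE} with exponent $\lambda\le q$ bounds both in $L^{\lambda/2}$ by $C(\|D_u\Psi^\pi\|_{2,\lambda}^\lambda+\|\mathcal G^{1,\pi}_u\|_{\mathcal H^\lambda_\Delta}^\lambda)$. The two norms on the right are handled as follows:
\begin{itemize}
\item $\|D_u\Psi^\pi\|_{2,\lambda}\le T^{1/2}\|D_u\Psi^\pi\|_{\infty,\lambda}\le C\mathfrak p_2^\Pi$, by Hölder's inequality since $\lambda\le q$.
\item $\|\mathcal G^{1,\pi}_u\|_{\mathcal H^\lambda_\Delta}\le T^{1/2}\|\mathcal G^{1,\pi}_u\|_{\mathcal T^q_\Delta}\le C\gamma_1^\Pi$, since $\int_0^T\int_t^T|\cdot|^2\,ds\,dt\le T\,\operatorname{ess\,sup}_t\int_t^T|\cdot|^2\,ds$.
\end{itemize}
Taking the $1/\lambda$ power gives a bound by $C(\mathfrak p_2^\Pi+\gamma_1^\Pi)$.

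\textbf{Step 3: the envelope terms.} For the $\alpha$ term I use $|(D_u\partial_yG_k^\pi)(s,\cdot)|\le\mathcal G^{y,\pi}_u(t_k,s)$. The dominating field is piecewise constant in its first variable, which gives
\[
\sum_k\Delta_k\int_{t_{k+1}}^T|\mathcal G^{y,\pi}_u(t_k,s)|^2ds=\int_0^T\int_0^s|\mathcal G^{y,\pi}_u(t,s)|^2dt\,ds\le T\sup_s\int_0^s|\mathcal G^{y,\pi}_u(t,s)|^2dt .
\]
Here the vanishing on the diagonal cell matches the restriction $s\ge t_{k+1}$. This yields the bound $C\gamma_y^\Pi$ in $L^\lambda$, using $\lambda\le q$.

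For the $\beta$ term I use the $\mathcal U$-norm: the sum is at most $T^2\sup_{(t,s)}|\mathcal G^{z,\pi}_u|^2$, giving $C\gamma_z^\Pi$.

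\textbf{Conclusion and main obstacle.} Minkowski's inequality in $L^\lambda(\Omega;\cdot)$ combines Steps 2 and 3 into the claimed bound, with constants depending only on $T$, $q$ and the derivative bounds, and uniform in $u$ and $\pi$. I expect the main obstacle to be Step 1: rigorously justifying the chain rule for the random-field composition, where $G$ depends on $\omega$ both directly and through $\Theta_k$. The accounting of the index conventions ($\tau(s)$ versus $t_l$ in $\alpha_{k,l}$) in Step 3 also needs care, but is routine.
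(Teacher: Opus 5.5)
Your proposal is correct and follows the paper's proof essentially step by step. Like the paper, you use the chain rule and the bounded second derivatives to bound $|D_u\alpha_k^\pi|$ and $|D_u\beta_k^\pi|$ by $|D_uY^\pi(\tau(s),s)|+|D_uZ^\pi(t_k,s)|$ plus the envelope. You then apply Proposition \ref{prop:malliavin-energy-estimates-discrete-BSVIE} to the first two terms, and use the piecewise-constant, vanishing-diagonal structure of the envelopes to pass to the $\mathcal S_\Delta$ and $\mathcal U_\Delta$ norms. Your explicit reductions $\|D_u\Psi^\pi\|_{2,\lambda}\le C\mathfrak p_2^\Pi$ and $\|\mathcal G^{1,\pi}_u\|_{\mathcal H^\lambda_\Delta}\le C\gamma_1^\Pi$ only spell out what the paper leaves implicit. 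The random-field chain rule you flag as the main obstacle is also only cited, not proved, in the paper.
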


The following two estimates control the Malliavin derivative of the fundamental matrix. We separate the diagonal and off-diagonal entries, since the latter require the additional mesh-weighted structure.

\begin{lemma}\label{lemma:diagonal-DU-bound}
Let $t\in[0,T]$ and $u\in[0,t)$. Let $1\leq \lambda<2$ and $2<\eta<q$. Then there exists a constant $C>0$, independent of $\pi$, $t$ and $u$, such that
\begin{flalign*}
    \mathbb{E}\bigg[
    \bigg(
    \sum_{k=0}^{N-1}
    \Delta_k
    \mathbb{E}_{u}\Big[
    \sup_{v \in [t, T]}
    \big|D_u U^\pi(t,v)_{k,k}\big|^{\lambda}
    \Big]^{\frac{2}{\lambda}}
    \bigg)^{\frac{\eta}{2}}
    \bigg]^{\frac{1}{\eta}}
    \leq
    C \Big\{
    \mathfrak p_2^\Pi
    +
    \gamma_1^\Pi
    +
    \gamma_z^\Pi
    \Big\}.
\end{flalign*}
\end{lemma}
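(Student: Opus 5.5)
We will work from the explicit exponential formula \eqref{eq:U-diagonal-expression}. Since $u<t$ and $\beta_k^\pi$ is progressively measurable, Malliavin differentiation of the stochastic exponential gives, for $v\in[t,T]$,
\begin{gather*}
    D_uU^\pi(t,v)_{k,k}
    =
    U^\pi(t,v)_{k,k}\,
    \Big(
    \int_t^v D_u\beta_k^\pi(r)\,dB(r)
    -
    \int_t^v \beta_k^\pi(r)\,D_u\beta_k^\pi(r)\,dr
    \Big).
\end{gather*}
There is no boundary term, because the integration starts at $t>u$. This formula can be justified by applying the chain rule to the exponent. Lemma \ref{lemma:energy-Dalpha-Dbeta} ensures that $D_u\beta_k^\pi$ is square integrable on $[t_{k+1},T]$, and $\beta_k^\pi$ vanishes on $[t_k,t_{k+1})$.

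We then take the supremum over $v$ and use conditional Hölder under $\mathbb E_u$ with exponents $\rho/\lambda$ and $\rho'/\lambda$, where $\lambda<\rho'<2$ is close to $\lambda$ and $\rho$ is large. This gives
\[
\mathbb E_u\big[\sup_v|D_uU_{k,k}|^\lambda\big]^{1/\lambda}\le \mathbb E_u\big[\sup_v|U_{k,k}|^{\rho}\big]^{1/\rho}\,\mathbb E_u\big[\sup_v|M_k(v)|^{\rho'}\big]^{1/\rho'},
\]
where $M_k$ denotes the bracket. The first factor is treated by the tower property, $\mathbb E_u[\cdot]=\mathbb E_u[\mathbb E_t[\cdot]]$, together with Lemma \ref{lemma:bound_U_delta}, which bounds the conditional moment of the diagonal entry by a deterministic constant $C$. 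For the second factor we apply conditional BDG, and use $|\beta_k^\pi|\le C$ from the bounded $z$-derivative. This bounds it by
\[
C\,\mathbb E_u\Big[\Big(\int_{t_{k+1}\vee t}^T|D_u\beta_k^\pi(r)|^2dr\Big)^{\rho'/2}\Big]^{1/\rho'}.
\]

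Squaring, summing with weights $\Delta_k$, and raising to $\eta/2$, we must bound
\[
\mathbb E\Big[\Big(\sum_k\Delta_k\,\mathbb E_u[X_k^{\rho'/2}]^{2/\rho'}\Big)^{\eta/2}\Big],
\qquad X_k=\int_{t_{k+1}}^T|D_u\beta_k^\pi|^2.
\]
Since $\rho'<2$, conditional Jensen gives $\mathbb E_u[X_k^{\rho'/2}]^{2/\rho'}\le \mathbb E_u[X_k]$. Summing over $k$, we get $\sum_k\Delta_k\mathbb E_u[X_k]=\mathbb E_u[\sum_k\Delta_kX_k]$. Then Jensen/Doob applied to the martingale $\mathbb E_u[\cdot]$ with $\eta/2>1$ gives
\[
\mathbb E\Big[\big(\mathbb E_u[\textstyle\sum_k\Delta_k X_k]\big)^{\eta/2}\Big]\le \mathbb E\Big[\big(\textstyle\sum_k\Delta_kX_k\big)^{\eta/2}\Big].
\]
This last quantity is controlled by Lemma \ref{lemma:energy-Dalpha-Dbeta} with $\lambda=\eta\le q$. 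It remains to check that only $\mathfrak p_2^\Pi,\gamma_1^\Pi,\gamma_z^\Pi$ appear, as in the statement.

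The main obstacle is that last point. Lemma \ref{lemma:energy-Dalpha-Dbeta} as stated also carries $\gamma_y^\Pi$. To remove it, I would redo its $\beta$-part directly:
\[
D_u\beta_k^\pi=(D_u\partial_zG_k^\pi)(\cdot)+\partial_{zy}G_k^\pi\,D_uY^\pi(\tau(s),s)+\partial_{zz}G_k^\pi\,D_uZ^\pi(t_k,s).
\]
The first term is bounded by $\mathcal G^{z,\pi}_u$, whose $\mathcal U_\Delta^q$-norm dominates the required weighted $L^2$ norm. The remaining terms use bounded second derivatives and Proposition \ref{prop:malliavin-energy-estimates-discrete-BSVIE}, whose right-hand side involves only $\|D_u\Psi^\pi\|_{2,\eta}\le C\mathfrak p_2^\Pi$ and $\gamma_1^\Pi$. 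The one point needing care is that the $D_uY^\pi(\tau(s),s)$ term is integrated over both $s$ and $k$. Since it does not depend on $k$ and $\sum_k\Delta_k\le T$, it reduces to $\int_0^T|D_uY^\pi(\tau(s),s)|^2ds$, which is covered by that proposition. A secondary issue is making the conditional BDG/Hölder step rigorous given that $\lambda<2$ is strict. This requires choosing $\rho'\in(\lambda,2)$, which is possible only because of that strict inequality.
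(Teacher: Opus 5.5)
Your proposal is correct and follows the paper's proof essentially step by step. Both arguments use the same formula for $D_uU^\pi(t,v)_{k,k}$, conditional H\"older combined with Lemma~\ref{lemma:bound_U_delta} and conditional BDG, conditional Jensen after summing over $k$, and then only the $\beta$-part of the estimate from the proof of Lemma~\ref{lemma:energy-Dalpha-Dbeta}; that is exactly how the paper avoids $\gamma_y^\Pi$. The one difference is cosmetic: the paper takes the conjugate exponents $\frac{2}{2-\lambda}$ and $\frac{2}{\lambda}$, which is your choice with $\rho'=2$ directly, so the intermediate $\rho'\in(\lambda,2)$ and the extra Jensen step are unnecessary, and the strictness of $\lambda<2$ is needed only to keep the exponent on $U^\pi$ finite.
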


\begin{lemma}\label{lemma:bound_D_theta_off_diagonal_rho1}
Let $t\in[0,T]$ and $u\in[0,t)$. Let $1\leq \lambda<2$ and $2< \eta<q$. Then there exists a constant $C>0$, independent of $\pi$, $t$ and $u$, such that
\begin{flalign}
\mathbb{E}\bigg[\bigg(\sum_{k = 0}^{N-1} \Delta_k
\sum_{j=k+1}^{N-1} 
\frac{1}{\Delta_j} 
\mathbb{E}_u \big[
\sup_{v \in [t, T]}
|D_u U^\pi(t,v)_{k,j}|^\lambda
\big]^{\frac{2}{\lambda}}
\bigg)^{\frac{\eta}{2}} \bigg]^{\frac{1}{\eta}}
 \leq
C \Big\{
\mathfrak p_2^\Pi
+
\gamma_1^\Pi
+
\gamma_y^\Pi
+
\gamma_z^\Pi
\Big\}.
\label{ineq:bound_D_theta_off_diagonal_rho1}
\end{flalign}
\end{lemma}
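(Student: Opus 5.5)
The plan is to differentiate the off-diagonal representation \eqref{eq:equation-U-offdg} in the Malliavin sense and argue by induction on the distance $j-k$, exploiting the fact that the integration domain $[t,v]\cap[t_j,t_{j+1})$ has length at most $\Delta_j$. For $u<t\leq v$, the product rule applied to \eqref{eq:equation-U-offdg} gives
\begin{flalign*}
D_u U^\pi(t,v)_{k,j}
=
\int_{[t,v]\cap[t_j,t_{j+1})}
\Big\{
D_u\mathcal E_j^\pi(w,v)\sum_{l=k}^{j-1}\alpha_{l,j}^\pi(w)U^\pi(t,w)_{k,l}
+\mathcal E_j^\pi(w,v)\sum_{l=k}^{j-1}\big[D_u\alpha_{l,j}^\pi(w)\,U^\pi(t,w)_{k,l}+\alpha_{l,j}^\pi(w)\,D_uU^\pi(t,w)_{k,l}\big]
\Big\}\,dw .
\end{flalign*}
Each of the three resulting groups will be bounded in $\mathbb E_u[\sup_v|\cdot|^\lambda]^{1/\lambda}$ by means of conditional Hölder inequalities. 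The working tools are: Lemma \ref{lemma:bound_U_delta} for $U^\pi(t,w)_{k,l}$ (of order $1$ if $l=k$ and of order $\Delta_l$ otherwise) and for $\mathcal E_j^\pi$; boundedness of $\alpha$; the analogue of Lemma \ref{lemma:diagonal-DU-bound} for $D_u\mathcal E_j^\pi(w,v)$, since $D_u\mathcal E_j^\pi$ has the same stochastic-exponential structure as \eqref{eq:U-diagonal-expression}; and Lemma \ref{lemma:energy-Dalpha-Dbeta} for $D_u\alpha$. Because $\lambda<2$, there is room to place the moments of $U^\pi$ and $\mathcal E^\pi$ in a high conditional $L^{\lambda'}$ and to keep the Malliavin-derivative factors in an $L^{\lambda''}$ with $\lambda''<2$ (or $\leq q$ after averaging).

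The key scaling is as follows. The domain length $\Delta_j$ combined with the factor $1/\Delta_j$ leaves one factor of order $\Delta_j^{1/2}$ after Cauchy--Schwarz in $w$. For instance, the $D_u\alpha$ term produces
\[
\Delta_j^{1/2}\Big(\int_{t_j}^{t_{j+1}}|D_u\alpha_{k,j}^\pi(w)|^2dw\Big)^{1/2}
\]
plus terms with $l>k$ that carry an extra factor $\Delta_l$. Squaring, dividing by $\Delta_j$ and summing over $j>k$ gives
\[
\int_{t_{k+1}}^T|D_u\alpha_k^\pi(w)|^2dw,
\]
and the subsequent $\sum_k\Delta_k$ matches exactly the quantity controlled in Lemma \ref{lemma:energy-Dalpha-Dbeta}. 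Sums of the type $\sum_l\Delta_l\cdot(\dots)$ are absorbed by Cauchy--Schwarz, using $\sum_l\Delta_l=T$. The $D_u\mathcal E_j$ term behaves in the same way, with Lemma \ref{lemma:diagonal-DU-bound} applied in the index $j$ and with the prefactor $U^\pi(t,w)_{k,l}$ being $O(1)$ only for $l=k$; it contributes $\Delta_j\cdot(\text{order }1)$, hence a summable quantity after division by $\Delta_j$ and summation.

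The main obstacle is the recursive term $\alpha_{l,j}^\pi D_uU^\pi(t,w)_{k,l}$, because it feeds the quantity being estimated back into itself. I would introduce
\[
a_{k,j}\coloneqq \mathbb E_u\big[\sup_{v}|D_uU^\pi(t,v)_{k,j}|^\lambda\big]^{1/\lambda}
\]
and derive, from the display above, a discrete Volterra-type inequality
\[
a_{k,j}\le \Delta_j R_{k,j} + C\Delta_j\big(a_{k,k}+\textstyle\sum_{k<l<j}a_{k,l}\big),
\]
where $R_{k,j}$ collects the source terms. The weighted quantities $b_{k,l}=a_{k,l}/\Delta_l$ then satisfy
\[
b_{k,j}\le R_{k,j}+C a_{k,k}+C\textstyle\sum_l\Delta_l b_{k,l},
\]
and a discrete Gronwall argument gives $b_{k,j}\lesssim R_{k,j}+a_{k,k}+\sum_l\Delta_l R_{k,l}$, with constants independent of $\pi$ thanks to $\prod_l(1+C\Delta_l)\leq e^{CT}$. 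Finally, one takes $\sum_j\Delta_j b_{k,j}^2$, then $\sum_k\Delta_k$, then the $L^{\eta/2}$ norm. The term $a_{k,k}$ is controlled by Lemma \ref{lemma:diagonal-DU-bound}, and the sources are controlled by Lemma \ref{lemma:energy-Dalpha-Dbeta} and Lemma \ref{lemma:bound_U_delta}; since $\eta<q$, Hölder's inequality applies with the moment conditions $\mathfrak p_2^\Pi,\gamma^\Pi_1,\gamma^\Pi_y,\gamma^\Pi_z$ entering through those lemmas. The delicate bookkeeping lies in keeping the conditional expectations $\mathbb E_u$ outside the suprema consistently, using the tower property $\mathbb E_u=\mathbb E_u\mathbb E_w$ for $w\geq t>u$ together with the $\mathcal F_w$-conditional form of Lemma \ref{lemma:bound_U_delta}.
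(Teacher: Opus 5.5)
Your proof is correct and starts where the paper's does. You apply the product rule to \eqref{eq:equation-U-offdg}, split the result into the $D_u\mathcal E_j^\pi$, $D_u\alpha^\pi$ and $D_uU^\pi$ groups, and bound the two source groups with essentially the same tools: Lemma \ref{lemma:bound_U_delta} via the tower property, conditional H\"older, conditional Jensen, and Lemma \ref{lemma:energy-Dalpha-Dbeta} with exponent $\eta$.

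The difference is in how you close the recursive term. The paper aggregates first. It uses $V_{k,j}(v)$, with the supremum only over $[t,v]$, and forms $\mathcal V(v)=\mathbb E[\|V(v)\|^{\eta/2}]^{2/\eta}$ in the weighted norm. It bounds the off-diagonal recursive part by $C\Delta_j\sum_i\Delta_i^{-1}\int_t^v\mathbf 1_{[t_j,t_{j+1})}(r)V_{k,i}(r)\,dr$, moves the $L^{\eta/2}(\Omega)$-norm inside the time integral by Minkowski, and concludes with a continuous-time Gronwall inequality in $v$.

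You instead use the strict triangularity directly. For fixed $k$, the $\mathcal F_u$-measurable quantities $a_{k,j}$ satisfy a finite forward recursion in $j$. You close it $\omega$-by-$\omega$ with a discrete Gronwall inequality for $b_{k,j}=a_{k,j}/\Delta_j$, which is the same device the paper uses in its proof of Lemma \ref{lemma:bound_U_delta}. You aggregate only at the end, via $\sum_j\Delta_jb_{k,j}^2=\sum_j\Delta_j^{-1}a_{k,j}^2$ and $(\sum_l\Delta_lR_{k,l})^2\leq T\sum_l\Delta_lR_{k,l}^2$.

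What each route buys:
\begin{itemize}
\item Your route needs no $v$-truncated quantities and no Minkowski step, but you must carry explicit pointwise-in-$j$ bounds.
\item The paper's route treats all pairs $(k,j)$ at once in the aggregated norm and never needs a pathwise Gronwall argument.
\end{itemize}

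One point should be stated more carefully. Lemma \ref{lemma:diagonal-DU-bound} cannot be invoked as stated for the $D_u\mathcal E_j^\pi(w,v)$ term, for two reasons:
\begin{itemize}
\item its initial time $w$ ranges over $[t_j,t_{j+1})$;
\item it is multiplied by $\sum_l\alpha_{l,j}^\pi(w)U^\pi(t,w)_{k,l}$, which has only bounded conditional moments.
\end{itemize}
What you need is the pointwise estimate from inside that lemma's proof, plus one more conditional H\"older step that places this factor in a high moment. This gives a contribution $C\Delta_j\,\mathbb E_u[\int_{t_{j+1}}^T|D_u\beta_j^\pi(r)|^2\,dr]^{1/2}$. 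So the factor you call \emph{order one} is random. After summation it is controlled by Lemma \ref{lemma:energy-Dalpha-Dbeta} for $\beta^\pi$, not by the aggregated statement of Lemma \ref{lemma:diagonal-DU-bound}; this is also how the paper treats its term $V_1$.
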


The final estimate in this subsection concerns the time regularity of the propagator. This will be used to control the increments of the representation formula in the time variable.

\begin{lemma}\label{lemma:time-regularity-U}
    Let $0\leq r\leq s\leq v\leq T$, and define $\delta_j(r,s)\coloneqq |[r,s]\cap[t_j,t_{j+1})|$, where $|\cdot|$ denotes Lebesgue measure. Then there exists a constant $C>0$, independent of $\pi$, $r$, $s$ and $v$, such that, for every $0 \leq k \leq N-1$,
    \begin{flalign}\label{eq:time-regularity-diagonal-U}
      \mathbb{E}_r \Big[ \big| U^\pi(s,v)_{k,k} - U^\pi(r,v)_{k,k} \big|^2 \Big]
      \leq C |s-r|.  
    \end{flalign}
    Moreover, for every $0\leq k<j\leq N-1$,
    \begin{flalign}\label{eq:time-regularity-off-diagonal-U}
      \mathbb{E}_r \Big[ \big| U^\pi(s,v)_{k,j} - U^\pi(r,v)_{k,j} \big|^2 \Big]
      \leq C\Big( \Delta_j^2 |s-r| + \delta_j(r,s)^2 \Big).
    \end{flalign}
\end{lemma}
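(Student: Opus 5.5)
The plan is to use the flow property $U^\pi(r,v)=U^\pi(r,s)U^\pi(s,v)$, which follows from $U^\pi(t,u)=\Phi^\pi(t)^{-1}\Phi^\pi(u)$. It gives
\[
U^\pi(s,v)-U^\pi(r,v)=\big(I_N-U^\pi(r,s)\big)U^\pi(s,v).
\]
Componentwise, using upper triangularity,
\[
U^\pi(s,v)_{k,j}-U^\pi(r,v)_{k,j}=\sum_{l=k}^{j}\big(\delta_{kl}-U^\pi(r,s)_{k,l}\big)U^\pi(s,v)_{l,j}.
\]
The increment is therefore controlled by the short-time behaviour of $U^\pi(r,s)$ near the identity, multiplied by entries of $U^\pi(s,v)$. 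The latter are bounded by Lemma~\ref{lemma:bound_U_delta}, with a factor $\Delta_j$ whenever $l<j$. Conditional moments are handled by the tower property: first condition on $\mathcal F_s$, then apply Lemma~\ref{lemma:bound_U_delta} to $U^\pi(s,\cdot)$, since $U^\pi(r,s)$ is $\mathcal F_s$-measurable.

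\emph{Diagonal case.} The term is $(1-U^\pi(r,s)_{k,k})U^\pi(s,v)_{k,k}$. Conditioning on $\mathcal F_s$ and using Lemma~\ref{lemma:bound_U_delta} with $\lambda=2$ reduces the bound to $\mathbb E_r|1-U^\pi(r,s)_{k,k}|^2$. By \eqref{eq:U-diagonal-expression}, $U^\pi(r,\cdot)_{k,k}-1=\int_r^\cdot \beta_k^\pi U^\pi(r,w)_{k,k}\,dB(w)$. Itô's isometry, the boundedness of $\beta_k^\pi$ and Lemma~\ref{lemma:bound_U_delta} then give $\mathbb E_r|\cdot|^2\le C|s-r|$, which is \eqref{eq:time-regularity-diagonal-U}.

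\emph{Off-diagonal case, $k<j$.} The sum splits into three groups.
\begin{itemize}[label={}]
\item (a) The term $l=k$: $(1-U^\pi(r,s)_{k,k})U^\pi(s,v)_{k,j}$. As above, this is bounded by $C|s-r|\Delta_j^2$.
\item (b) The terms $k<l<j$: $-U^\pi(r,s)_{k,l}U^\pi(s,v)_{l,j}$. After conditioning these contribute $C\Delta_j^2\,\mathbb E_r|U^\pi(r,s)_{k,l}|^2$.
\item (c) The term $l=j$: $-U^\pi(r,s)_{k,j}U^\pi(s,v)_{j,j}$, contributing $C\,\mathbb E_r|U^\pi(r,s)_{k,j}|^2$.
\end{itemize}

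For (c), use \eqref{eq:equation-U-offdg}. The integration domain is $[r,s]\cap[t_j,t_{j+1})$, of length $\delta_j(r,s)$. The integrand consists of bounded $\alpha$, a moment-bounded $\mathcal E_j^\pi$, and $\sum_{l<j}|U^\pi(r,w)_{k,l}|\le C$ in moments. Cauchy--Schwarz together with Lemma~\ref{lemma:bound_U_delta} then give $C\delta_j(r,s)^2$.

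\emph{Main obstacle.} The hard part is (b): summing $\mathbb E_r|U^\pi(r,s)_{k,l}|^2$ over $l$ without losing a factor of $N$. By the same argument as in (c), $\mathbb E_r|U^\pi(r,s)_{k,l}|^2\le C\delta_l(r,s)^2$. To avoid squaring a sum of $N$ terms, bound the quadratic mean of the sum crudely by
\[
\Big(\sum_l \|U^\pi(r,s)_{k,l}\|_{L^2(\mathbb P_r)}\Big)^2\le \Big(C\sum_l\delta_l(r,s)\Big)^2 = C|s-r|^2\le C|s-r|.
\]
Minkowski's inequality in conditional $L^2$ is applied to the products. Each product is handled by conditioning on $\mathcal F_s$ first, which yields $\|U^\pi(s,v)_{l,j}\|\le C\Delta_j$ (via Lemma~\ref{lemma:bound_U_delta}'s conditional bound). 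The Minkowski step therefore gives $\Delta_j\sum_l \|U^\pi(r,s)_{k,l}\|\le C\Delta_j|s-r|$, whose square is bounded by $C\Delta_j^2|s-r|$. Combining (a)--(c) yields \eqref{eq:time-regularity-off-diagonal-U}.
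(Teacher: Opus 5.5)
Your proof is correct and follows the paper's argument. The flow property $U^\pi(r,v)=U^\pi(r,s)U^\pi(s,v)$ splits the increment into three pieces: the diagonal factor $1-U^\pi(r,s)_{k,k}$, the last-column term $U^\pi(r,s)_{k,j}U^\pi(s,v)_{j,j}$ of size $\delta_j(r,s)$ via \eqref{eq:equation-U-offdg}, and the intermediate sum over $k<l<j$, each controlled by Lemma~\ref{lemma:bound_U_delta}.

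The differences are only technical. You condition on $\mathcal F_s$ and work with second moments, where the paper uses conditional Cauchy--Schwarz with fourth moments. For the intermediate sum you use Minkowski with $\sum_l\delta_l(r,s)\le|s-r|$, giving $\Delta_j^2|s-r|^2$; the paper uses a weighted Cauchy--Schwarz with $\delta_l(r,s)\le\Delta_l$, giving $\Delta_j^2|s-r|$.
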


\subsection{Proof of Theorem \ref{thm:holder-regularity-system-BSDEs}}

Assume $s\geq r$, without loss of generality. By
\eqref{eq:malliavin-trace-bsde-system}, for $r\geq t_k$,
\begin{flalign}\notag
Z^\pi(t_k,s)-Z^\pi(t_k,r)
&=
D_sY^\pi(t_k,s)-D_rY^\pi(t_k,r)
\\
&=
\big[D_sY^\pi(t_k,s)-D_rY^\pi(t_k,s)\big]
+
\big[D_rY^\pi(t_k,s)-D_rY^\pi(t_k,r)\big].
\label{eq:decomp-Z-BSDE-system}
\end{flalign}

We first estimate the first term. Applying Lemma
\ref{lemma:stability-estimate-system-BSDE} to the BSDE systems satisfied by
$D_sY^\pi$ and $D_rY^\pi$, and then Assumption
\ref{main assumption convergence rate BSVIEs, discrete setting} with
$\theta=r$ and $\theta'=s$, gives
\begin{flalign*}
\mathbb E\bigg[
\bigg(
\sum_{k\colon t_k\leq r}
\Delta_k
\big|D_sY^\pi(t_k,s)-D_rY^\pi(t_k,s)\big|^2
\bigg)^{\frac p2}
\bigg]
&\leq
C\mathbb E\bigg[
\bigg(
\sum_{k=0}^{N-1}
\Delta_k
\big|D_s\Psi_k^\pi-D_r\Psi_k^\pi\big|^2
\bigg)^{\frac p2}
+
\bigg(
\sum_{k=0}^{N-1}
\Delta_k
\int_{t_{k+1}}^T
\big|\mathcal C_s^\pi(v)_k-\mathcal C_r^\pi(v)_k\big|^2\,dv
\bigg)^{\frac p2}
\bigg]
\\
&\quad\leq
C
\big(
\mathfrak p_1^\Pi+\kappa^\Pi
\big)
\big(
|s-r|^{\frac p2}
+
|\pi|^{\frac p2}
\big).
\end{flalign*}
It remains to estimate the second term in \eqref{eq:decomp-Z-BSDE-system}. Define
\[
\Lambda_{u,v}(k)
\coloneqq
\mathbb{E}_u\Bigg[
    \sum_{j=k}^{N-1} U^\pi(v,T)_{k,j}D_r\Psi_j^\pi \Bigg],  \qquad \Xi_{u,v,w}(k) 
\coloneqq
\mathbb{E}_u\Bigg[
    \int_v^T
    \sum_{j=k}^{N-1} U^\pi(w,a)_{k,j} \, \mathcal C_r^\pi(a)_j\,da
\Bigg]
\]
for $k \in \{0, \dots, N-1\}$. We then have
\begin{flalign*}
D_rY^\pi(t_k,s)-D_rY^\pi(t_k,r)
&=
\Lambda_{s,s}(k)+\Xi_{s,s,s}(k)
-
\Lambda_{r,r}(k)-\Xi_{r,r,r}(k)
\\
&=
\big[\Lambda_{s,s}(k)-\Lambda_{r,s}(k)\big]
+
\big[\Lambda_{r,s}(k)-\Lambda_{r,r}(k)\big]
\\
&\quad
+
\big[\Xi_{s,s,s}(k)-\Xi_{r,s,s}(k)\big]
+
\big[\Xi_{r,s,s}(k)-\Xi_{r,s,r}(k)\big]
+
\big[\Xi_{r,s,r}(k)-\Xi_{r,r,r}(k)\big]
\\
&\eqqcolon I_1(k)+I_2(k)+I_3(k)+I_4(k)+I_5(k).
\end{flalign*}
We show that
\begin{flalign}\label{eq:I_i_ineq}
    \mathbb E\bigg[
    \bigg(
    \sum_{k \colon t_k \leq r}
    \Delta_k |I_i(k)|^2
    \bigg)^{\frac p2}
    \bigg]
    \leq
    K |s-r|^{\frac p2},
    \quad \forall i \in \{1, \dots, 5\},
\end{flalign}
with $K$ independent of $s$, $r$, and $\pi$.

\medskip

\textit{Estimate for $I_1$.} By definition,
\begin{flalign*}
    I_1(k)
    &=
    \mathbb E_s\left[
    \sum_{j=k}^{N-1}
    U^\pi(s,T)_{k,j}D_r\Psi_j^\pi
    \right]
    -
    \mathbb E_r\left[
    \sum_{j=k}^{N-1}
    U^\pi(s,T)_{k,j}D_r\Psi_j^\pi
    \right],
\end{flalign*}
so $I_1$ is a martingale increment over $(r,s)$. The Clark--Ocone formula and the Burkholder--Davis--Gundy inequality in the weighted Hilbert space $\mathbb R^N$, endowed with the norm
$\|x\|_\Delta^2\coloneqq\sum_{k=0}^{N-1}\Delta_k|x_k|^2$, yield
\begin{flalign*}
    \mathbb E\bigg[
    \bigg(
    \sum_{k\colon t_k\leq r}
    \Delta_k |I_1(k)|^2
    \bigg)^{\frac p2}
    \bigg]
    \leq
    C|s-r|^{\frac p2}
    \sup_{u\in[r,s]}
    \mathbb E\bigg[
    \bigg(
    \sum_{k\colon t_k\leq r}
    \Delta_k |J_1(k)|^2
    \bigg)^{\frac p2}
    \bigg],
\end{flalign*}
where
\begin{gather*}
    J_1(k)
    \coloneqq
    \mathbb E_u\Bigg[
    D_u\Bigg(
    \sum_{j=k}^{N-1}
    U^\pi(s,T)_{k,j}D_r\Psi_j^\pi
    \Bigg)
    \Bigg].
\end{gather*}
It therefore suffices to prove
\begin{flalign}\label{eq:bound-on-J1}
    \mathbb E\bigg[
    \bigg(
    \sum_{k\colon t_k\leq r}
    \Delta_k |J_1(k)|^2
    \bigg)^{\frac p2}
    \bigg]
    \leq C,
\end{flalign}
uniformly in $u$, $r$, $s$, and $\pi$. By the Malliavin product rule,
\begin{flalign*}
    J_1(k)
    &=
    \mathbb E_u\Bigg[
    \sum_{j=k}^{N-1}
    D_uU^\pi(s,T)_{k,j}D_r\Psi_j^\pi
    \Bigg]
    +
    \mathbb E_u\Bigg[
    \sum_{j=k}^{N-1}
    U^\pi(s,T)_{k,j}D_uD_r\Psi_j^\pi
    \Bigg]
    \\
    &\eqqcolon
    J_2(k)+J_3(k).
\end{flalign*}
Set
\begin{gather}\label{eq:holder-exponents-main-proof}
    \lambda\coloneqq\frac{q}{q-1},
    \qquad
    \eta\coloneqq\frac{pq}{q-p}.
\end{gather}
Since $2\leq p<q/2$, we have $1\leq\lambda<2$ and $2<\eta<q$.

Using conditional Hölder's inequality with exponents $\lambda$ and $q$, weighted Cauchy--Schwarz for the off-diagonal sum, Hölder's inequality in expectation, and Lemmas
\ref{lemma:diagonal-DU-bound} and
\ref{lemma:bound_D_theta_off_diagonal_rho1}, we obtain
\begin{flalign*}
    \mathbb E\bigg[
    \bigg(
    \sum_{k\colon t_k\leq r}
    \Delta_k |J_2(k)|^2
    \bigg)^{\frac p2}
    \bigg]
    \leq
    C
    \big(\mathfrak p_2^\Pi\big)^p
    \big(
    \mathfrak p_2^\Pi
    +
    \gamma_1^\Pi
    +
    \gamma_y^\Pi
    +
    \gamma_z^\Pi
    \big)^p.
\end{flalign*}
For $J_3$, conditional Cauchy--Schwarz, Lemma
\ref{lemma:bound_U_delta}, weighted Cauchy--Schwarz in the off-diagonal sum, and conditional Jensen's inequality give
\begin{flalign*}
    \mathbb E\bigg[
    \bigg(
    \sum_{k\colon t_k\leq r}
    \Delta_k |J_3(k)|^2
    \bigg)^{\frac p2}
    \bigg]
    \leq
    C
    \mathbb E\bigg[
    \bigg(
    \sum_{j=0}^{N-1}
    \Delta_j
    |D_uD_r\Psi_j^\pi|^2
    \bigg)^{\frac p2}
    \bigg]
    \leq
    C\mathfrak p_3^\Pi.
\end{flalign*}
Combining the estimates for $J_2$ and $J_3$ proves
\eqref{eq:bound-on-J1}, and hence \eqref{eq:I_i_ineq} for $i=1$.

\medskip

\textit{Estimate for $I_2$.} We split the contribution into its diagonal and off-diagonal parts:
\begin{flalign*}
    I_2(k)
    &=
    \mathbb{E}_r\Big[
    \big(U^\pi(s,T)_{k,k}-U^\pi(r,T)_{k,k}\big)
    D_r\Psi_k^\pi
    \Big]
    +
    \sum_{j=k+1}^{N-1}
    \mathbb{E}_r\Big[
    \big(U^\pi(s,T)_{k,j}-U^\pi(r,T)_{k,j}\big)
    D_r\Psi_j^\pi
    \Big]
    \\
    &\eqqcolon
    I_2^{\mathrm{dg}}(k)
    +
    I_2^{\mathrm{off-dg}}(k).
\end{flalign*}
By conditional Cauchy--Schwarz and Lemma
\ref{lemma:time-regularity-U},
\begin{gather*}
    \big|I_2^{\mathrm{dg}}(k)\big|^2
    \leq
    C|s-r|
    \mathbb E_r\big[
    |D_r\Psi_k^\pi|^2
    \big],
\end{gather*}
whereas
\begin{flalign*}
    \big|I_2^{\mathrm{off-dg}}(k)\big|^2
    &\leq
    C\bigg(
    \sum_{j=k+1}^{N-1}
    \Big(
    \Delta_j|s-r|^{\frac12}
    +
    \delta_j(r,s)
    \Big)
    \mathbb E_r\big[
    |D_r\Psi_j^\pi|^2
    \big]^{\frac12}
    \bigg)^2
    \leq
    C|s-r|
    \sum_{j=k+1}^{N-1}
    \Delta_j
    \mathbb E_r\big[
    |D_r\Psi_j^\pi|^2
    \big].
\end{flalign*}
Here, the last inequality follows from Cauchy--Schwarz, together with
$\sum_j\delta_j(r,s)\leq |s-r|$ and
$\delta_j(r,s)\leq\Delta_j$.

Consequently, summing over $k$, using
$\sum_{k\colon t_k\leq r}\Delta_k\leq T$, conditional Jensen's inequality,
and Assumption
\ref{main assumption convergence rate BSVIEs, discrete setting}, we obtain
\begin{flalign*}
    \mathbb E\bigg[
    \bigg(
    \sum_{k\colon t_k\leq r}
    \Delta_k|I_2(k)|^2
    \bigg)^{\frac p2}
    \bigg]
    &\leq
    C|s-r|^{\frac p2}
    \mathbb E\bigg[
    \bigg(
    \sum_{j=0}^{N-1}
    \Delta_j
    |D_r\Psi_j^\pi|^2
    \bigg)^{\frac p2}
    \bigg]
    \leq
    C|s-r|^{\frac p2}
    \big(\mathfrak p_2^\Pi\big)^p.
\end{flalign*}
Thus, \eqref{eq:I_i_ineq} holds for $i=2$.

\medskip

\textit{Estimate for $I_3$.} We first observe that
\begin{flalign*}
    I_3(k)
    &=
    \mathbb E_s\left[
    \int_s^T
    \sum_{j=k}^{N-1}
    U^\pi(s,v)_{k,j}\mathcal C_r^\pi(v)_j\,dv
    \right]
    -
    \mathbb E_r\left[
    \int_s^T
    \sum_{j=k}^{N-1}
    U^\pi(s,v)_{k,j}\mathcal C_r^\pi(v)_j\,dv
    \right].
\end{flalign*}
As in the estimate for $I_1$, the Clark--Ocone formula and the
Burkholder--Davis--Gundy inequality in
$(\mathbb R^N,\|\cdot\|_\Delta)$ reduce the proof of
\eqref{eq:I_i_ineq} for $i=3$ to showing that
\begin{flalign}\label{eq:bound-on-J4}
    \mathbb E\bigg[
    \bigg(
    \sum_{k\colon t_k\leq r}
    \Delta_k |J_4(k)|^2
    \bigg)^{\frac p2}
    \bigg]
    \leq C, \qquad \text{where} \qquad J_4(k)
    \coloneqq
    \mathbb E_u\Bigg[
    D_u\Bigg(
    \int_s^T
    \sum_{j=k}^{N-1}
    U^\pi(s,v)_{k,j}\mathcal C_r^\pi(v)_j\,dv
    \Bigg)
    \Bigg],
\end{flalign}
uniformly in $u\in[r,s)$, $r$, $s$, and $\pi$. By the Malliavin product
rule,
\begin{flalign*}
    J_4(k)
    &=
    \mathbb E_u\Bigg[
    \int_s^T
    \sum_{j=k}^{N-1}
    D_uU^\pi(s,v)_{k,j}\mathcal C_r^\pi(v)_j\,dv
    \Bigg]
    +
    \mathbb E_u\Bigg[
    \int_s^T
    \sum_{j=k}^{N-1}
    U^\pi(s,v)_{k,j}D_u\mathcal C_r^\pi(v)_j\,dv
    \Bigg]
    \\
    &\eqqcolon
    J_5(k)+J_6(k).
\end{flalign*}
Using conditional Hölder's inequality with the exponents $\lambda$ and $q$
from \eqref{eq:holder-exponents-main-proof}, weighted Cauchy--Schwarz for
the off-diagonal terms, Hölder's inequality in expectation, the envelope
bound for $\mathcal C_r^\pi$, and Lemmas
\ref{lemma:diagonal-DU-bound} and
\ref{lemma:bound_D_theta_off_diagonal_rho1}, we obtain
\begin{flalign*}
    \mathbb E\bigg[
    \bigg(
    \sum_{k\colon t_k\leq r}
    \Delta_k |J_5(k)|^2
    \bigg)^{\frac p2}
    \bigg]
    \leq
    C
    \big(\gamma_1^\Pi\big)^p
    \big(
    \mathfrak p_2^\Pi
    +
    \gamma_1^\Pi
    +
    \gamma_y^\Pi
    +
    \gamma_z^\Pi
    \big)^p.
\end{flalign*}

For $J_6$, conditional Cauchy--Schwarz, Lemma
\ref{lemma:bound_U_delta}, weighted Cauchy--Schwarz for the off-diagonal
terms, and conditional Jensen's inequality give
\begin{flalign*}
    \mathbb E\bigg[
    \bigg(
    \sum_{k\colon t_k\leq r}
    \Delta_k |J_6(k)|^2
    \bigg)^{\frac p2}
    \bigg]
    \leq
    C
    \mathbb E\bigg[
    \bigg(
    \sum_{j=0}^{N-1}
    \Delta_j
    \int_{t_{j+1}}^T
    |D_u\mathcal C_r^\pi(v)_j|^2\,dv
    \bigg)^{\frac p2}
    \bigg].
\end{flalign*}
By the Malliavin chain rule, for $v\in[t_{j+1},T]$,
\begin{flalign*}
    D_u\mathcal C_r^\pi(v)_j
    &=
    \big(D_uD_rG_j^\pi\big)
    \Big(v,Y^\pi(\tau(v),v),Z^\pi(t_j,v)\Big)
    \\
    &\quad+
    \big(\partial_yD_rG_j^\pi\big)
    \Big(v,Y^\pi(\tau(v),v),Z^\pi(t_j,v)\Big)
    D_uY^\pi(\tau(v),v)
    \\
    &\quad+
    \big(\partial_zD_rG_j^\pi\big)
    \Big(v,Y^\pi(\tau(v),v),Z^\pi(t_j,v)\Big)
    D_uZ^\pi(t_j,v).
\end{flalign*}
Consequently, the envelope bounds, Hölder's inequality, and Proposition
\ref{prop:malliavin-energy-estimates-discrete-BSVIE} yield
\begin{flalign*}
    &\mathbb E\bigg[
    \bigg(
    \sum_{k\colon t_k\leq r}
    \Delta_k |J_6(k)|^2
    \bigg)^{\frac p2}
    \bigg]
    \leq
    C\Bigg\{
    \big(\gamma_2^\Pi\big)^p
    +
    \bigg(
    \big(\gamma_y^\Pi\big)^p
    +
    \big(\gamma_z^\Pi\big)^p
    \bigg)
    \bigg(
    \big(\mathfrak p_2^\Pi\big)^p
    +
    \big(\gamma_1^\Pi\big)^p
    \bigg)
    \Bigg\}.
\end{flalign*}
Combining the estimates for $J_5$ and $J_6$ proves
\eqref{eq:bound-on-J4}, and hence \eqref{eq:I_i_ineq} for $i=3$.

\medskip

\textit{Estimate for $I_4$.} We now estimate $I_4(k).$ We write
\begin{flalign*}
    I_4(k)
    &=
    \int_s^T \mathbb{E}_r\Big[\big( U^\pi(s,v)_{k,k} - U^\pi(r,v)_{k,k} \big) \mathcal C_r^\pi(v)_k\,\Big]dv 
    +  \sum_{j=k+1}^{N-1} \int_s^T \mathbb{E}_r\Big[\big( U^\pi(s,v)_{k,j} - U^\pi(r,v)_{k,j} \big) \mathcal C_r^\pi(v)_j\,\Big]dv  \\
    &\eqqcolon I_4^{\text{dg}}(k) + I_4^{\text{off-dg}}(k).
\end{flalign*}
Using conditional Cauchy--Schwarz, Lemma \ref{lemma:time-regularity-U}, and Cauchy--Schwarz in the time variable,
\begin{flalign*}
    |I_4^{\mathrm{dg}}(k)|^2
    &\leq
    \bigg(
    \int_s^T
    \mathbb{E}_r\Big[\big| U^\pi(s,v)_{k,k} - U^\pi(r,v)_{k,k} \big|^2 \Big]^{\frac{1}{2}}
    \mathbb{E}_r\Big[\big|\mathcal C_r^\pi(v)_k\big|^2\Big]^{\frac{1}{2}}
    dv
    \bigg)^2  \\
    &\leq
    C|s-r|
    \int_s^T
    \mathbb E_r\Big[
    |\mathcal C_r^\pi(v)_k|^2
    \Big]
    dv.
\end{flalign*}
Hence, by conditional Jensen's inequality and the tower property,
\begin{flalign*}
    \mathbb E\bigg[
    \bigg(
    \sum_{k \colon t_k \leq r}
    \Delta_k
    |I_4^{\mathrm{dg}}(k)|^2
    \bigg)^{\frac p2}
    \bigg]
    &\leq
    C|s-r|^{\frac p2}
    \mathbb E\bigg[
    \bigg(
    \sum_{k \colon t_k \leq r}
    \Delta_k
    \int_s^T
    \mathbb E_r\Big[
    |\mathcal C_r^\pi(v)_k|^2
    \Big]
    dv
    \bigg)^{\frac p2}
    \bigg] \\
    &\leq
    C|s-r|^{\frac p2}
    \mathbb E\bigg[
    \bigg(
    \sum_{k=0}^{N-1}
    \Delta_k
    \int_{t_{k+1}}^T
    |\mathcal C_r^\pi(v)_k|^2
    dv
    \bigg)^{\frac p2}
    \bigg] \\
    &\leq
    C|s-r|^{\frac p2}
    \big(\gamma_1^\Pi\big)^p .
\end{flalign*}
For the off-diagonal terms, conditional Cauchy--Schwarz and Lemma \ref{lemma:time-regularity-U} give
\begin{flalign*}
    |I_4^{\mathrm{off-dg}}(k)|^2
    &\leq
    \bigg(
    \sum_{j=k+1}^{N-1}
    \int_s^T
    \mathbb{E}_r\Big[
    \big| U^\pi(s,v)_{k,j} - U^\pi(r,v)_{k,j} \big|^2
    \Big]^{\frac{1}{2}}
    \mathbb{E}_r\Big[
    |\mathcal C_r^\pi(v)_j|^2
    \Big]^{\frac{1}{2}}
    \,dv
    \bigg)^2 \\
    &\leq
    C
    \bigg(
    \sum_{j=k+1}^{N-1}
    \Big(
    \Delta_j |s-r|^{\frac12}
    +
    \delta_j(r,s)
    \Big)
    \int_s^T
    \mathbb{E}_r\Big[
    |\mathcal C_r^\pi(v)_j|^2
    \Big]^{\frac{1}{2}}
    \,dv
    \bigg)^2 .
\end{flalign*}
We estimate the two contributions separately. By Cauchy--Schwarz in the sum over $j$ and in the time variable,
\begin{flalign*}
    \bigg(
    \sum_{j=k+1}^{N-1}
    \Delta_j |s-r|^{\frac12}
    \int_s^T
    \mathbb{E}_r\Big[
    |\mathcal C_r^\pi(v)_j|^2
    \Big]^{\frac{1}{2}}
    \,dv
    \bigg)^2
    &\leq
    C|s-r|
    \sum_{j=k+1}^{N-1}
    \Delta_j
    \int_s^T
    \mathbb{E}_r\Big[
    |\mathcal C_r^\pi(v)_j|^2
    \Big]\,dv .
\end{flalign*}
Moreover, using $\sum_j\delta_j(r,s)\leq |s-r|$, $\delta_j(r,s)\leq \Delta_j$, and again Cauchy--Schwarz in time,
\begin{flalign*}
    \bigg(
    \sum_{j=k+1}^{N-1}
    \delta_j(r,s)
    \int_s^T 
    \mathbb{E}_r\Big[
    |\mathcal C_r^\pi(v)_j|^2 
    \Big]^{\frac{1}{2}}
    \,dv
    \bigg)^2
    &\leq
    C
    \bigg(
    \sum_{j=k+1}^{N-1}
    \delta_j(r,s)
    \bigg)
    \bigg(
    \sum_{j=k+1}^{N-1}
    \delta_j(r,s)
    \int_s^T
    \mathbb{E}_r\Big[
    |\mathcal C_r^\pi(v)_j|^2
    \Big]\,dv
    \bigg) \\
    &\leq
    C|s-r|
    \sum_{j=k+1}^{N-1}
    \Delta_j
    \int_s^T
    \mathbb{E}_r\Big[
    |\mathcal C_r^\pi(v)_j|^2
    \Big]\,dv .
\end{flalign*}
Therefore,
\begin{flalign*}
    |I_4^{\mathrm{off-dg}}(k)|^2
    &\leq
    C|s-r|
    \sum_{j=k+1}^{N-1}
    \Delta_j
    \int_s^T
    \mathbb E_r\Big[
    |\mathcal C_r^\pi(v)_j|^2
    \Big]\,dv .
\end{flalign*}
Hence, by conditional Jensen's inequality and the tower property,
\begin{flalign*}
    \mathbb E\bigg[
    \bigg(
    \sum_{k \colon t_k \leq r}
    \Delta_k
    |I_4^{\mathrm{off-dg}}(k)|^2
    \bigg)^{\frac p2}
    \bigg]
    &\leq
    C|s-r|^{\frac p2}
    \mathbb E\bigg[
    \bigg(
    \sum_{k \colon t_k \leq r}
    \Delta_k
    \sum_{j=k+1}^{N-1}
    \Delta_j
    \int_s^T
    \mathbb E_r\Big[
    |\mathcal C_r^\pi(v)_j|^2
    \Big]\,dv
    \bigg)^{\frac p2}
    \bigg] \\
    &\leq
    C|s-r|^{\frac p2}
    \mathbb E\bigg[
    \bigg(
    \sum_{j=0}^{N-1}
    \Delta_j
    \int_{t_{j+1}}^T
    \mathbb E_r\Big[
    |\mathcal C_r^\pi(v)_j|^2
    \Big]\,dv
    \bigg)^{\frac p2}
    \bigg] \\
    &\leq
    C|s-r|^{\frac p2}
    \mathbb E\bigg[
    \bigg(
    \sum_{j=0}^{N-1}
    \Delta_j
    \int_{t_{j+1}}^T
    |\mathcal C_r^\pi(v)_j|^2
    dv
    \bigg)^{\frac p2}
    \bigg] \\
    &\leq
    C|s-r|^{\frac p2}
    \big(\gamma_1^\Pi\big)^p .
\end{flalign*}
Combining the diagonal and off-diagonal estimates gives \eqref{eq:I_i_ineq} for $i=4$.

\medskip

\textit{Estimate for $I_5$.} We separate the diagonal and off-diagonal contributions:
\begin{flalign*}
    I_5(k)
    &=
    -
    \int_r^s
    \mathbb E_r\Big[
    U^\pi(r,v)_{k,k}\mathcal C_r^\pi(v)_k
    \Big]\,dv
    -
    \int_r^s
    \sum_{j=k+1}^{N-1}
    \mathbb E_r\Big[
    U^\pi(r,v)_{k,j}\mathcal C_r^\pi(v)_j
    \Big]\,dv
    \\
    &\eqqcolon
    I_5^{\mathrm{dg}}(k)
    +
    I_5^{\mathrm{off-dg}}(k).
\end{flalign*}
By conditional Cauchy--Schwarz, Lemma
\ref{lemma:bound_U_delta}, and Cauchy--Schwarz in time and in the sum over
$j$,
\begin{flalign*}
    \big|I_5^{\mathrm{dg}}(k)\big|^2
    &\leq
    C|s-r|
    \int_r^s
    \mathbb E_r\big[
    |\mathcal C_r^\pi(v)_k|^2
    \big]\,dv,
    \\
    \big|I_5^{\mathrm{off-dg}}(k)\big|^2
    &\leq
    C|s-r|
    \sum_{j=k+1}^{N-1}
    \Delta_j
    \int_r^s
    \mathbb E_r\big[
    |\mathcal C_r^\pi(v)_j|^2
    \big]\,dv.
\end{flalign*}
Consequently, summing over $k$, using conditional Jensen's inequality, the
tower property, and the envelope bound for $\mathcal C_r^\pi$, we obtain
\begin{flalign*}
    \mathbb E\bigg[
    \bigg(
    \sum_{k\colon t_k\leq r}
    \Delta_k|I_5(k)|^2
    \bigg)^{\frac p2}
    \bigg]
    &\leq
    C|s-r|^{\frac p2}
    \mathbb E\bigg[
    \bigg(
    \sum_{j=0}^{N-1}
    \Delta_j
    \int_{t_{j+1}}^T
    |\mathcal C_r^\pi(v)_j|^2\,dv
    \bigg)^{\frac p2}
    \bigg]
    \\
    &
    \leq
    C|s-r|^{\frac p2}
    \big(\gamma_1^\Pi\big)^p.
\end{flalign*}
Thus, \eqref{eq:I_i_ineq} holds for $i=5$.

We have therefore proved \eqref{eq:I_i_ineq} for every $i\in\{1,\dots,5\}$. Combining these estimates with the decomposition of
$D_rY^\pi(t_k,s)-D_rY^\pi(t_k,r)$, and then with \eqref{eq:decomp-Z-BSDE-system}, yields the desired estimate. The constants involved depend only on the constants in Assumption \ref{main assumption convergence rate BSVIEs, discrete setting}, and are therefore independent of $\pi$, $s$ and $r$. This completes the proof.

\section{Hölder regularity estimate for the solution of the BSVIE}\label{section:holder-reg-bsvie}

We now prove the Hölder regularity estimate for the $Z$ component of the BSVIE. The proof combines the uniform estimates obtained for the approximating BSDE systems with a passage to the limit. This is done through cell-averaged approximations of the data, which allow us to recover the BSVIE without imposing additional pointwise regularity in the first time variable. Later, in Subsection \ref{section:examples}, we provide two examples of data satisfying the assumptions used below.

\subsection{Assumptions and main result}

We first introduce the assumptions needed for the Hölder regularity estimate. They are stated directly at the level of the BSVIE data and are chosen so that the corresponding cell-averaged BSDE systems satisfy the uniform discrete assumptions from the previous section.

\begin{assumption}\label{assumption-Holder-BSVIE}
    Fix $2\leq p<\frac q2$. We impose the following conditions.

    \begin{enumerate}[label=(\roman*)]
    \item $\Psi(t) \in \mathbb{D}^{2,2}$ for all $t \in [0,T]$. Moreover, we have $\Psi \in L^p(\Omega; L^2(0,T))$ and
\begin{flalign*}
    \mathfrak p_1
    &\coloneqq 
    \sup_{\theta < \theta'}
    \frac{
    \|D_\theta\Psi-D_{\theta'}\Psi\|_{2,p}^{p}
    }{
    |\theta-\theta'|^{\frac p2}
    }
    < \infty, \qquad
    \mathfrak p_2
    \coloneqq
    \sup_{\theta \in [0,T]}
    \norm{D_\theta\Psi}_{\infty,q}
    < \infty, \\
    \mathfrak p_3
    &\coloneqq 
    \sup_{\theta, \theta' \in [0,T]}
    \big\|D_{\theta'}D_\theta\Psi\big\|_{2,p}^{p}
    < \infty.
\end{flalign*}

   \item The generator $G$ has continuous and uniformly bounded first and second-order partial derivatives with respect to $y$ and $z$, uniformly in $(t,s)$. Moreover, $\norm{G^{0}}_{\mathcal H_\Delta^q}<\infty$.

    \item Under conditions (i) and (ii), let $(Y,Z)$ denote the solution to the BSVIE \eqref{eq:bsvie-intro}. For almost every $t\in[0,T]$ and $(y,z)\in\mathbb R^2$, assume that $G(t,\cdot,y,z)$, $\partial_yG(t,\cdot,y,z)$ and $\partial_zG(t,\cdot,y,z)$ belong to $\mathbb L_a^{1,2}(t,T)$.

    Moreover, assume that, for every $\theta,\theta'\in[0,T]$, there exist non-negative random fields on $\Omega\times\Delta(0,T)$ such that, $\mathbb P$-a.s., for every $(t,s)\in\Delta(0,T)$ and $(y,z)\in\mathbb R^2$,
\[
    |D_\theta G(t,s,y,z)|
    \leq
    \mathcal G_\theta^1(t,s),
    \qquad
    |D_\theta \partial_y G(t,s,y,z)|
    \leq
    \mathcal G_\theta^y(t,s),
\]
\[
    |D_\theta \partial_z G(t,s,y,z)|
    \leq
    \mathcal G_\theta^z(t,s),
    \qquad
    |D_{\theta'}D_\theta G(t,s,y,z)|
    \leq
    \mathcal G_{\theta,\theta'}^2(t,s).
\]
We assume that
\begin{flalign*}
    \gamma_1
    &\coloneqq
    \sup_{\theta\in[0,T]}
    \norm{\mathcal G_\theta^1}_{\mathcal T_\Delta^q}
    <\infty,
    \quad
    \gamma_y
    \coloneqq
    \sup_{\theta\in[0,T]}
    \norm{\mathcal G_\theta^y}_{\mathcal S_\Delta^q}
    <\infty,
    \\
    \gamma_z
    &\coloneqq
    \sup_{\theta\in[0,T]}
    \norm{\mathcal G_\theta^z}_{\mathcal U_\Delta^q}
    <\infty,
    \quad
    \gamma_2
    \coloneqq
    \sup_{\theta,\theta'\in[0,T]}
    \norm{\mathcal G_{\theta,\theta'}^2}_{\mathcal H_\Delta^p}
    <\infty.
\end{flalign*}
    In addition, for every $\theta,t\in[0,T]$ and $(y,z)\in\mathbb R^2$, assume that $D_\theta G(t,\cdot,y,z)\in\mathbb L_a^{1,2}(t,T)$. Moreover, for every $s\in[t,T]$, assume that $D_\theta G(t,s,\cdot,\cdot)$ has continuous partial derivatives with respect to $y$ and $z$, denoted by $\partial_yD_\theta G(t,s,y,z)$ and $\partial_zD_\theta G(t,s,y,z)$.

    Finally, assume that, for every $\theta<\theta'$, there exists a non-negative random field $\mathcal K_{\theta,\theta'}$ on $\Omega\times\Delta(0,T)$ such that, $\mathbb P$-a.s., for every $(t,s)\in\Delta(0,T)$ and every $(y,z)\in\mathbb R^2$,
    \[
        |D_\theta G(t,s,y,z)-D_{\theta'}G(t,s,y,z)|
        \leq
        \mathcal K_{\theta,\theta'}(t,s).
    \]
    We assume that
    \[
        \kappa
        \coloneqq
        \sup_{\theta<\theta'}
        \frac{
        \mathbb E
        \left[
        \left(
        \int_0^T
        \int_t^T
        |\mathcal K_{\theta,\theta'}(t,s)|^2\,ds\,dt
        \right)^{\frac p2}
        \right]
        }{
        |\theta-\theta'|^{\frac p2}
        }
        <\infty.
    \]
    \end{enumerate}
\end{assumption}

\begin{remark}\label{remark:swapped-derivative-bounds}
As in the discrete setting, the pointwise envelope bounds on $D_\theta\partial_yG$ and $D_\theta\partial_zG$ also control the partial derivatives of $D_\theta G$. More precisely, for every $(t,s)\in\Delta(0,T)$, $\theta\in[0,T]$ and $(y,z)\in\mathbb R^2$, we have
\begin{gather*}
    |\partial_yD_\theta G(t,s,y,z)|
    \leq
    \mathcal G_\theta^y(t,s),
    \qquad
    |\partial_zD_\theta G(t,s,y,z)|
    \leq
    \mathcal G_\theta^z(t,s),
    \qquad \mathbb P\text{-a.s.}
\end{gather*}
\end{remark}

Under these assumptions, the $Z$ component of the BSVIE solution satisfies the following Hölder-type estimate in the second time variable, after averaging in $L^2$ over the first time variable.

\begin{theorem}\label{thm:holder-regularity-BSVIE}
    Let $(\Psi, G)$ satisfy Assumption \ref{assumption-Holder-BSVIE}, and let $(Y, Z)$ be the unique solution to the BSVIE \eqref{eq:bsvie-intro}. Then there exists a version of $Z$ such that
    \begin{gather*}
        \mathbb E\bigg[
        \bigg(
        \int_0^{s\wedge r}
        |Z(t,s)-Z(t,r)|^2\,dt
        \bigg)^{\frac p2}
        \bigg]
        \leq K |s-r|^{\frac p2},
        \qquad s,r\in[0,T],
    \end{gather*}
    where $K>0$ is independent of $s$ and $r$.
\end{theorem}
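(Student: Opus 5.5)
We pass to the limit in Theorem \ref{thm:holder-regularity-system-BSDEs}, using cell-averaged discrete data. Take a sequence of partitions $\Pi=(\pi_n)$ with $|\pi_n|\to0$. On each cell we define
\[
\Psi_k^\pi\coloneqq\frac1{\Delta_k}\int_{t_k}^{t_{k+1}}\Psi(t)\,dt,\qquad
G_k^\pi(s,y,z)\coloneqq\mathbf 1_{[t_{k+1},T]}(s)\,\frac1{\Delta_k}\int_{t_k}^{t_{k+1}}G(t,s,y,z)\,dt .
\]
This choice enforces the vanishing-diagonal condition \eqref{eq:vanish-diagonal}.

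The first step is to check that $(\Psi^\pi,G^\pi)$ satisfy Assumption \ref{main assumption convergence rate BSVIEs, discrete setting} with constants independent of $\pi$. Malliavin derivatives commute with the cell averages, by closability and Fubini. The averaging operator $P_\pi\colon f\mapsto \sum_k\mathbf 1_{[t_k,t_{k+1})}\Delta_k^{-1}\int_{t_k}^{t_{k+1}}f$ is a contraction on $L^2(0,T)$. Together with Jensen's inequality, this gives $\mathfrak p_1^\Pi\le\mathfrak p_1$ and $\mathfrak p_3^\Pi\le\mathfrak p_3$. The bound $\mathfrak p_2^\Pi\le\mathfrak p_2$ holds because averages are dominated by the essential supremum. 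Derivative bounds in $(y,z)$ are preserved. For the envelopes we take averaged envelopes, e.g.\ $\mathcal G^{1,\pi}_\theta(t,s)=\mathbf 1_{s\ge t_{k+1}}\Delta_k^{-1}\int_{t_k}^{t_{k+1}}\mathcal G^1_\theta(t',s)dt'$ for $t\in[t_k,t_{k+1})$.

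The norms $\mathcal T_\Delta^q$, $\mathcal S^q_\Delta$, $\mathcal U^q_\Delta$, $\mathcal H^q_\Delta$ are controlled by Jensen and Minkowski. The one delicate case is $\mathcal T_\Delta^q$: an averaged-in-$t$ integral over $s$ is bounded by the supremum over $t'$. Truncating to $s\ge t_{k+1}$ only helps. The same argument gives $\kappa^\Pi\le\kappa$, since $\sum_k\Delta_k\int_{t_{k+1}}^T|\mathcal K^\pi|^2\le\int_0^T\int_t^T|\mathcal K|^2$ by Jensen.

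Theorem \ref{thm:holder-regularity-system-BSDEs} therefore applies with a uniform $K$. Since $Z^\pi(\tau(t),s)$ is constant in $t$ on cells, its conclusion reads
\[
\mathbb E\Big[\Big(\int_0^{\tau(s\wedge r)+\Delta}|Z^\pi(\tau(t),s)-Z^\pi(\tau(t),r)|^2dt\Big)^{p/2}\Big]\le K(|s-r|^{p/2}+|\pi|^{p/2}).
\]
Here the upper limit includes the last cell $[t_k,t_{k+1})$ containing $s\wedge r$, which covers $[0,s\wedge r]$.

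The second step is convergence. We apply Theorem \ref{thm:convergence-BSDE-system-BSVIE}, which requires $\varepsilon_\Psi^\pi+\varepsilon_G^\pi\to0$. For $\varepsilon_\Psi^\pi$, note $\Psi^\pi=P_\pi\Psi$, and $P_\pi f\to f$ in $L^2(0,T)$ for each $\omega$. Dominated convergence, using $\mathbb E\|P_\pi\Psi-\Psi\|^2\le4\mathbb E\|\Psi\|^2$, then gives the limit.

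For $\varepsilon_G^\pi$ we split into two parts. The first is the diagonal cell $s\in[t,t_{k+1})$, bounded by $\int_0^T(\int_t^{t+|\pi|}|G|ds)^2dt$; this tends to zero by absolute continuity, since $G(t,s,Y(s),Z(t,s))\in\mathcal H^2_\Delta$ by the Lipschitz bound. The second is the averaging error, where the frozen arguments $Y(s),Z(t,s)$ are evaluated at $t$ rather than at the averaging variable $t'$. This is the main obstacle, because $Z(t,s)$ depends on $t$. We handle it with the Lipschitz bound:
\[
|P_\pi[G(\cdot,s,y,z)](t)-G(t,s,y,z)|\big|_{y=Y(s),z=Z(t,s)} .
\]
We plan to approximate $G(\cdot,s,\cdot,\cdot)$ in the $(y,z)$-argument by the Lipschitz structure. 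Concretely, we write $G(t',s,Y(s),Z(t,s))=G(t',s,Y(s),Z(t',s))+O(|Z(t,s)-Z(t',s)|)$. The term $\int\!\!\int\Delta^{-1}\!\int_{\text{cell}}|Z(t,s)-Z(t',s)|^2dt'\,ds\,dt$ tends to zero by $L^2$-continuity of translations and density of step functions in $\mathcal H^2_\Delta$. The remainder is $P_\pi h-h$ with $h(t)=G(t,\cdot,Y,Z(t,\cdot))\in L^2$, which converges by the same dominated convergence argument.

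Hence $Z^\pi(\tau(\cdot),\cdot)\to Z$ in $\mathcal H^2_\Delta$. Along a subsequence the convergence holds for a.e.\ $s$: $\int_0^s|Z^\pi(\tau(t),s)-Z(t,s)|^2dt\to0$ in probability.

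The final step is the limit itself. For a.e.\ pair $(s,r)$ we apply Fatou's lemma to the discrete bound restricted to $t\le s\wedge r$ and obtain the estimate with constant $K$. To get it for all $s,r$, we choose the version of $Z$ as the $L^2(\Omega\times[0,\cdot])$-limit. The a.e.\ Hölder estimate makes $s\mapsto Z(\cdot,s)\mathbf 1_{[0,s]}$ uniformly continuous, as a map into $L^p(\Omega;L^2)$, on a full-measure dense set. We then redefine $Z(\cdot,s)$ for the exceptional $s$ as the corresponding limit. This modification occurs on a null set of $\Delta(0,T)$, so it is still a version and still solves the BSVIE, and the bound extends to all $s,r$ by continuity.
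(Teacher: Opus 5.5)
Up to the last step your argument is sound and largely follows the paper. The shared steps are: cell-averaged data; a Jensen-type check that Assumption~\ref{main assumption convergence rate BSVIEs, discrete setting} holds uniformly along $\Pi$; reduction to $\varepsilon_\Psi^\pi+\varepsilon_G^\pi\to0$ through Theorem~\ref{thm:convergence-BSDE-system-BSVIE}; a subsequence along which the slice errors vanish for a.e.\ $s$; and Fatou. Your handling of $\varepsilon_G^\pi$ differs from the paper's, and the difference is legitimate. The paper applies Lebesgue differentiation for fixed $(y,z)$, extends to all $(y,z)$ via a countable dense set and the Lipschitz bound, and dominates with the Hardy--Littlewood maximal function. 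You instead replace $Z(t,s)$ by $Z(t',s)$ inside the cell average at Lipschitz cost, then use $P_\pi h\to h$ slice-wise and the vanishing of the $L^2$ cell oscillation of $Z$. This is valid and avoids the maximal inequality.

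The gap is in passing from a.e.\ $(s,r)$ to all $(s,r)$. You claim the a.e.\ estimate makes $s\mapsto Z(\cdot,s)\mathbf 1_{[0,s]}$ uniformly continuous into $L^p(\Omega;L^2(0,T))$. But for $r<s$,
\[
\big\|Z(\cdot,s)\mathbf 1_{[0,s]}-Z(\cdot,r)\mathbf 1_{[0,r]}\big\|_{L^2(0,T)}^2
=\int_0^r|Z(t,s)-Z(t,r)|^2\,dt+\int_r^s|Z(t,s)|^2\,dt,
\]
and the Hölder estimate controls only the first term. Nothing you have proved makes $\int_r^s|Z(t,s)|^2\,dt$ small uniformly in $s$. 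That would need an additional uniform-in-$t$ bound on $Z(t,s)$, plausibly derivable from $Z^\pi(t_k,s)=D_sY^\pi(t_k,s)$ together with $\mathfrak p_2$ and $\gamma_1$, but you neither state nor prove it. Hence the extension by continuity is not justified as written, and neither is defining $Z(\cdot,s)$ at exceptional $s$ as ``the corresponding limit''.

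The repair, which is what the paper does, is to work on a fixed common domain:
\begin{enumerate}
\item For rational $a\in[0,T)$, consider the map $u\mapsto Z(\cdot,u)|_{[0,a]}$ from $[a,T]$ into $L^p(\Omega;L^2(0,a))$. It satisfies the $\tfrac12$-Hölder bound on the full-measure good set, since $[0,a]\subset[0,u\wedge v]$, so it extends continuously to all of $[a,T]$.
\item For rationals $b<a$, restrict the $a$-extension to $[0,b]$ and compare it with the $b$-extension. Both are continuous on $[a,T]$ and agree a.e., hence everywhere, which yields a single representative.
\item For arbitrary $s,r$ with $s\wedge r>0$, take rationals $a_m\uparrow s\wedge r$ and apply monotone convergence.
\end{enumerate}
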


The usual dyadic proof of the Kolmogorov continuity theorem (see \cite[Theorem 2.1]{revuz}) can be adapted to the estimate in Theorem \ref{thm:holder-regularity-BSVIE}, yielding the following pathwise consequence.

\begin{corollary}\label{cor:holder-continuity-BSVIE}
    Suppose that Assumption \ref{assumption-Holder-BSVIE} holds for some $p>2$,
    and let $0<\gamma<\frac12-\frac1p$. Then there exists a version of $Z$,
    still denoted by $Z$, and a random variable $C_\gamma$ satisfying
    $\mathbb E[|C_\gamma|^p]<\infty$, such that, almost surely,
    \begin{gather*}
        \bigg(
        \int_0^{s\wedge r}
        |Z(t,s)-Z(t,r)|^2\,dt
        \bigg)^{\frac12}
        \leq
        C_\gamma |s-r|^\gamma,
        \qquad s,r\in[0,T].
    \end{gather*}
\end{corollary}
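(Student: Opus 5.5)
The plan is to run the dyadic chaining argument of the Kolmogorov continuity theorem directly on the random "pseudo-distance"
\[
    d(s,r)\coloneqq\bigg(\int_0^{s\wedge r}|Z(t,s)-Z(t,r)|^2\,dt\bigg)^{\frac12},
\]
using Theorem \ref{thm:holder-regularity-BSVIE}, which reads $\mathbb E[d(s,r)^p]\leq K|s-r|^{p/2}$. I will not embed $s\mapsto Z(\cdot,s)\mathbf 1_{[0,s]}$ in $L^2(0,T)$. Its increments also contain the term $\int_{s\wedge r}^{s\vee r}|Z(t,s\vee r)|^2\,dt$, which Theorem \ref{thm:holder-regularity-BSVIE} does not control.

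\textbf{Step 1: a monotone triangle inequality.} Since $d$ is not a metric, I first record the substitute that the chaining needs. If $s\leq u\leq r$, then $[0,s]\subseteq[0,u]$, and Minkowski's inequality in $L^2(0,s)$ gives
\[
    d(s,r)\leq \|Z(\cdot,s)-Z(\cdot,u)\|_{L^2(0,s)}+\|Z(\cdot,u)-Z(\cdot,r)\|_{L^2(0,s)}\leq d(s,u)+d(u,r).
\]
More generally, for $s\leq u_1\leq\dots\leq u_m\leq r$ we get $d(s,r)\leq\sum_i d(u_i,u_{i+1})$ (with $u_0=s$, $u_{m+1}=r$), because every interval involved contains $[0,s]$. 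The chains in the dyadic argument always consist of intermediate points lying between $s$ and $r$, so this weak inequality suffices.

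\textbf{Step 2: dyadic bound.} Normalize $T=1$ and let $D_n=\{i2^{-n}\}$. Set $K_n\coloneqq\max_i d(i2^{-n},(i+1)2^{-n})$. Bounding the maximum by the sum gives
\[
    \mathbb E[K_n^p]\leq 2^n K 2^{-np/2},
\]
so $\|K_n\|_{L^p}\leq K^{1/p}2^{-n(1/2-1/p)}$. Put $C'\coloneqq\sum_n 2^{n\gamma}K_n$. Since $\gamma<\frac12-\frac1p$, Minkowski's inequality gives $\|C'\|_{L^p}<\infty$. For dyadic $s<r$ with $2^{-m-1}<r-s\leq 2^{-m}$, the standard decomposition of $[s,r]$ into at most two dyadic intervals at each level $n>m$ produces a monotone chain from $s$ to $r$. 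Step 1 then yields
\[
    d(s,r)\leq 2\sum_{n>m}K_n\leq C_\gamma|s-r|^\gamma,
\]
with $C_\gamma=c\,C'$ and $c$ depending only on $\gamma$.

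\textbf{Step 3: the version.} For arbitrary $s$, I take dyadic $s_n\downarrow s$. By Step 2, the sequence $(Z(\cdot,s_n))_n$ restricted to $[0,s]$ is a.s. Cauchy in $L^2(0,s)$, because $[0,s]\subseteq[0,s_n\wedge s_m]$. I define $\tilde Z(\cdot,s)$ as its limit. Theorem \ref{thm:holder-regularity-BSVIE} gives $d(s,s_n)\to0$ in $L^p$, so $\tilde Z(\cdot,s)=Z(\cdot,s)$ in $L^2(0,s)$ a.s. for each fixed $s$, i.e. $\tilde Z$ is a version in the $dt$-a.e. sense relevant for BSVIEs. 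Passing to the limit in the dyadic bound, again using that every interval involved contains $[0,s\wedge r]$, gives the pathwise estimate for all $s,r$.

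\textbf{Main obstacle.} I expect the main difficulty to be Step 3 rather than the chaining. One has to make the limit jointly measurable in $(\omega,t,s)$ and progressively measurable in $s$; I would achieve this by taking the limit along a fixed sequence, e.g. $s_n=\lceil 2^n s\rceil 2^{-n}$, and defining $\tilde Z=\limsup$ of $Z(t,s_n)$ on the set where the limit exists. One must also check that redefining $Z(t,s)$ on a $dt$-null set for each $s$ does not affect the BSVIE. This holds because $\tilde Z=Z$ $d\mathbb P\otimes dt\otimes ds$-a.e. by Fubini.
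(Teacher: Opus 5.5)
Your proposal is correct and follows essentially the same route as the paper. Both arguments run dyadic chaining on $K_n=\max_j d(t_j^n,t_{j+1}^n)$ with $C'_\gamma=\sum_n h_n^{-\gamma}K_n$, and both use the observation that every pair in the chain has smaller endpoint $\geq s\wedge r$, so all increments can be measured in $L^2(0,s\wedge r)$; they then take right dyadic limits and identify the resulting version via Theorem \ref{thm:holder-regularity-BSVIE}. The only cosmetic difference is in the chain itself: you decompose $[s,r]$ into dyadic intervals, whereas the paper telescopes along the right approximations $s_n\downarrow s$ and $r_n\downarrow r$.
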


The proofs of Theorem \ref{thm:holder-regularity-BSVIE} and Corollary \ref{cor:holder-continuity-BSVIE} are given later in this section. We first introduce the cell-averaged BSDE systems used to approximate the BSVIE and to pass the discrete regularity estimate to the limit.

\subsection{Approximation by cell-averaged BSDE systems}

To prove Theorem \ref{thm:holder-regularity-BSVIE}, we use the approximating BSDE systems introduced earlier, with data obtained by averaging the BSVIE coefficients over the first time variable. As in the discrete setting, the generator is set equal to zero on the diagonal cell. The following lemma shows that the corresponding BSDE systems converge to the BSVIE solution in the relevant $L^p(\Omega;L^2)$ norm.

\begin{lemma}\label{lemma:convergence-cell-average-data}
    Suppose that Assumption \ref{assumption-Holder-BSVIE} holds, and let $(Y,Z)$ be the unique solution to the BSVIE \eqref{eq:bsvie-intro}. 
    
    Let $\pi$ be a partition of $[0,T]$. For each $k\in\{0,\dots,N-1\}$, define
    \begin{gather}\label{eq:terminal-data-averaged}
        \Psi_k^\pi
        \coloneqq
        \frac{1}{\Delta_k}
        \int_{t_k}^{t_{k+1}}\Psi(t)\,dt.
    \end{gather}
    Moreover, for $0\leq k\leq N-1$ and $s\in[t_k,T]$, define
    \begin{gather}\label{eq:generator-data-averaged}
        G_k^\pi(s,y,z)
        \coloneqq
        \begin{cases}
        0, & s\in[t_k,t_{k+1}), \\[0.4em]
        \displaystyle
        \frac{1}{\Delta_k}
        \int_{t_k}^{t_{k+1}}
        G(r,s,y,z)\,dr,
        & s\in[t_{k+1},T].
        \end{cases}
    \end{gather}
    Let $\big(Y^\pi(t_k,\cdot),Z^\pi(t_k,\cdot)\big)_{k=0}^{N-1}$ be the solution of the BSDE system associated with the data $(\Psi^\pi,G^\pi)$. Then
    \begin{flalign}\label{eq:lp-conv-BSDE-BSVIE}
        \mathbb E\bigg[
        \int_0^T |Y(t)-Y^\pi(\tau(t),t)|^2\,dt
        \bigg]
        +
        \mathbb E\bigg[
        \int_0^T\int_t^T
        |Z(t,s)-Z^\pi(\tau(t),s)|^2\,ds\,dt
        \bigg]
        \longrightarrow 0
    \end{flalign}
    whenever $|\pi|\to0$.
\end{lemma}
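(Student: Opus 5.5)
The plan is to reduce the claim to Theorem \ref{thm:convergence-BSDE-system-BSVIE} with $p=2$, and then show that both approximation errors $\varepsilon_\Psi^\pi$ and $\varepsilon_G^\pi$ vanish as $|\pi|\to0$.

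\textbf{Hypotheses of Theorem \ref{thm:convergence-BSDE-system-BSVIE}.} First we check that the cell-averaged data satisfy the assumptions of Theorem \ref{theorem:energy-estimates}, with a Lipschitz constant independent of $\pi$.
\begin{itemize}
\item The averaged generator $G_k^\pi$ is Lipschitz in $(y,z)$ with constant $[G]_L$, since averaging preserves the Lipschitz bound. It vanishes on the diagonal cell by construction.
\item Progressive measurability in $s$ is preserved by averaging in $r$, by Fubini.
\item Jensen's inequality gives $\|\Psi^\pi\|_{2,2}\le\|\Psi\|_{2,2}$.
\item Jensen's inequality also gives $\|G^{\pi,0}\|_{\mathcal H^2_\Delta}\le \|G^0\|_{\mathcal H^2_\Delta}$. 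Here the extra region $r\in[t_k,t_{k+1})$, $s\ge t_{k+1}$ is contained in $\Delta(0,T)$, so no new domain issues arise.
\end{itemize}

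\textbf{Convergence of $\varepsilon_\Psi^\pi$.} Note that $\Psi^\pi(t)$ is exactly the conditional expectation (in $L^2(0,T)$, pathwise) of $\Psi(\omega,\cdot)$ onto functions piecewise constant on $\pi$. These averaging operators are contractions and converge strongly to the identity on $L^2(0,T)$ as $|\pi|\to0$: the claim holds for continuous functions by uniform continuity, and extends to $L^2$ by density. Hence $\int_0^T|\Psi-\Psi^\pi|^2dt\to0$ for a.e. $\omega$. This quantity is dominated by $4\int_0^T|\Psi|^2dt\in L^1(\Omega)$, so dominated convergence gives $\varepsilon_\Psi^\pi\to0$.

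\textbf{Convergence of $\varepsilon_G^\pi$.} Set $g(t,s):=G(t,s,Y(s),Z(t,s))$ and $\tilde g(t,s):=G^\pi(t,s,Y(s),Z(t,s))$. Split the $s$-integral into the diagonal cell $s\in[t,t_{k+1})$ and the region $s\ge t_{k+1}$.

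\emph{Diagonal cell.} Here $G^\pi=0$, so the error is $\int_t^{\tau(t)+\Delta}|g|\,ds$. By Cauchy--Schwarz its square is at most $|\pi|\int_t^T|g|^2ds$. Using the Lipschitz bound together with $Y\in\mathcal H^2$, $Z\in\mathcal H^2_\Delta$ and $G^0\in\mathcal H^2_\Delta$, after integrating in $t$ and taking expectations this is $O(|\pi|)$.

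\emph{Off-diagonal region.} The subtlety is that $G^\pi$ averages $G(r,s,y,z)$ in $r$ at the frozen point $z=Z(t,s)$, rather than averaging $g$ itself. I would write
\[
\tilde g(t,s)-g(t,s)=\frac1{\Delta_k}\int_{t_k}^{t_{k+1}}\big[G(r,s,Y(s),Z(t,s))-G(t,s,Y(s),Z(t,s))\big]dr .
\]
I would control this by approximation, first replacing $Z(t,s)$ with a $(t,s)$-step process $\hat Z$, piecewise constant in $t$ on a fixed coarse partition.
\begin{itemize}
\item The Lipschitz property makes the substitution cost at most $C\|Z-\hat Z\|_{\mathcal H^2_\Delta}$, uniformly in $\pi$. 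The cost on the averaged side is handled the same way, since averaging in $r$ does not touch $z$.
\item Once $Z$ is replaced by $\hat Z$ (and $Y(s)$ is fixed, not depending on $t$), the expression $G(r,s,Y(s),\hat Z(t,s))$ is, on each coarse cell, of the form $h(r,s)$ for a fixed $L^2$ field $h$. Its square-integrability follows from the linear growth of $G$ and $G^0\in\mathcal H^2_\Delta$.
\item The fine $\pi$-average in $r$ of $h$ then converges to $h$ in $L^2(dr\,ds\,d\mathbb P)$, by the same averaging-operator argument as for $\Psi$, applied in the variable $r$ with $(s,\omega)$ as parameters.
\end{itemize}
The quantity in $\varepsilon_G^\pi$ is $\int_0^T(\int_t^T|\cdot|ds)^2dt\le T\int\!\!\int|\cdot|^2$, so $L^2$ control suffices.

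\textbf{Main obstacle.} The step I expect to be hardest is this approximation of $Z$ in the $z$-slot. Two points need care.
\begin{itemize}
\item Coarse cells straddling the $\pi$-cell boundaries must be dealt with. Taking $\pi$ finer than the coarse partition, the boundary cells contribute a measure $O(|\pi|)$ error.
\item The countably-valued approximation of $\hat Z$ in $z$ must be made measurable. I would use step functions in $(t,s)$ with $L^2(\Omega)$-valued values, so that $h$ stays jointly measurable.
\end{itemize}
Letting first $|\pi|\to0$ and then the coarse approximation become exact yields $\varepsilon_G^\pi\to0$. Theorem \ref{thm:convergence-BSDE-system-BSVIE} then gives \eqref{eq:lp-conv-BSDE-BSVIE}.
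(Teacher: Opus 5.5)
Your proposal is correct. The reduction to Theorem~\ref{thm:convergence-BSDE-system-BSVIE}, the Jensen checks on the averaged data, and the treatment of $\varepsilon_\Psi^\pi$ agree with the paper. For $\varepsilon_G^\pi$, however, you take a genuinely different route.

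The paper proves that $R^\pi(t,s)\to0$ pointwise a.e.\ on $\Omega\times\Delta(0,T)$:
\begin{itemize}
\item For each fixed $t<s$ one eventually has $s\geq t_{k+1}$, so the diagonal cell needs no separate estimate.
\item The Lebesgue differentiation theorem in the first time variable gives convergence for $(y,z)$ in a countable dense set.
\item The uniform Lipschitz bound extends this to all $(y,z)$ off a common null set, and one then substitutes $(Y(s),Z(t,s))$.
\item An integrable majorant comes from the Hardy--Littlewood maximal function $\mathcal M_1\overline G_s(t)$ and the maximal inequality, and dominated convergence finishes.
\end{itemize}

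You instead use an $\varepsilon$-approximation in the $z$-slot. The Lipschitz bound controls, uniformly in $\pi$, the cost of replacing $Z$ by a field $\hat Z$ that is piecewise constant in $t$ on a coarse grid, on both the exact and the averaged side. After this replacement, the off-diagonal error is the fine cell average in $r$ of a fixed square-integrable field minus its value at $t$. That vanishes by strong convergence of the averaging contractions.

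Your argument needs only that cell averaging is an $L^2$ contraction converging strongly to the identity. It avoids pointwise differentiation and the maximal inequality. The paper's argument is shorter and yields a.e.\ convergence as a by-product.

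The straddling issue you flag does not arise. The $z$-argument is frozen at $\hat Z(t,s)=\hat Z_i(s)$, which is determined by the coarse cell $i$ containing $t$. Also, $G(r,s,Y(s),\hat Z_i(s))$ is defined for all $r\leq s$. So the averaged term is exactly the fine cell average of this field, wherever the fine cell sits relative to the coarse grid. Taking $\hat Z$ to be the coarse cell average of $Z$ in $t$ also settles the measurability point for free.
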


\begin{proof}
By Assumption \ref{assumption-Holder-BSVIE} and Jensen's inequality, the cell-averaged data $(\Psi^\pi,G^\pi)$ satisfy the assumptions of Theorem \ref{theorem:energy-estimates}. Hence the corresponding BSDE system admits a unique solution. Moreover, Theorem \ref{thm:convergence-BSDE-system-BSVIE} reduces the proof of \eqref{eq:lp-conv-BSDE-BSVIE} to showing that
\begin{gather*}
    \varepsilon_\Psi^\pi,\varepsilon_G^\pi
    \longrightarrow0,
    \qquad |\pi|\to0.
\end{gather*}
The convergence of $\varepsilon_\Psi^\pi$ follows from the strong convergence of cell-average approximations in $L^2(\Omega\times(0,T))$.

For the generator error, set
\begin{gather*}
    R^\pi(t,s)
    \coloneqq
    G(t,s,Y(s),Z(t,s))
    -
    G^\pi(\tau(t),s,Y(s),Z(t,s)).
\end{gather*}
By the definition of $\varepsilon_G^\pi$ and Cauchy--Schwarz,
\begin{flalign*}
    \varepsilon_G^\pi
    &=
    \mathbb E\int_0^T
    \bigg(
    \int_t^T |R^\pi(t,s)|\,ds
    \bigg)^2dt
    \leq
    T\mathbb E\int_0^T\int_t^T
    |R^\pi(t,s)|^2\,ds\,dt.
\end{flalign*}
It is therefore enough to prove
\begin{gather}\label{eq:L2-R-G}
    \mathbb E\int_0^T\int_t^T
    |R^\pi(t,s)|^2\,ds\,dt
    \longrightarrow0.
\end{gather}

For almost every $(t,s)\in\Delta(0,T)$, we have $t<s$. Let $k$ be such that $t\in[t_k,t_{k+1})$. For $|\pi|$ sufficiently small, $s\geq t_{k+1}$, and hence, for fixed $(y,z)\in\mathbb R^2$,
\begin{gather*}
    G^\pi(\tau(t),s,y,z)
    =
    \frac{1}{\Delta_k}
    \int_{t_k}^{t_{k+1}}
    G(r,s,y,z)\,dr
    \longrightarrow
    G(t,s,y,z)
\end{gather*}
for a.e. $(t,s,\omega)\in\Delta(0,T)\times\Omega$, by the Lebesgue differentiation theorem. Applying this first on a countable dense subset of $\mathbb R^2$ and then using the uniform Lipschitz property in $(y,z)$ shows that the convergence holds simultaneously for all $(y,z)$ outside a common null set. Therefore,
\begin{gather}\label{eq:pointwise-convergence}
    R^\pi(t,s)\longrightarrow0
    \qquad
    \text{for a.e. }(t,s,\omega)\in\Delta(0,T)\times\Omega.
\end{gather}

It remains to obtain an integrable dominating term. For $s\in[0,T]$, define
\begin{gather*}
    \overline G_s(t)
    \coloneqq
    G(t,s,0,0)\mathbf 1_{[0,s]}(t),
    \qquad t\in[0,T],
\end{gather*}
and let $\mathcal M_1$ denote the Hardy--Littlewood maximal operator in the first time variable. By the definition of the cell average,
\begin{gather*}
    |G^\pi(\tau(t),s,0,0)|
    \leq
    \mathcal M_1\overline G_s(t).
\end{gather*}
Consequently, the Lipschitz property gives
\begin{flalign*}
    |R^\pi(t,s)|^2
    \leq
    C\bigg(
    |G(t,s,0,0)|^2
    +
    |\mathcal M_1\overline G_s(t)|^2
    +
    |Y(s)|^2
    +
    |Z(t,s)|^2
    \bigg).
\end{flalign*}
Moreover, by Fubini's theorem and the Hardy--Littlewood maximal inequality, applied pathwise in the first time variable,
\begin{flalign*}
    \int_0^T\int_t^T
    |\mathcal M_1\overline G_s(t)|^2\,ds\,dt
    &=
    \int_0^T\int_0^s
    |\mathcal M_1\overline G_s(t)|^2\,dt\,ds
    \\
    &\leq
    C\int_0^T\int_0^T
    |\overline G_s(t)|^2\,dt\,ds
    \\
    &=
    C\int_0^T\int_0^s
    |G(t,s,0,0)|^2\,dt\,ds.
\end{flalign*}
Thus, Assumption \ref{assumption-Holder-BSVIE} and
\eqref{eq:energy-estimates-BSVIE-solution} imply
\begin{flalign*}
    \mathbb E\int_0^T\int_t^T
    \big(
    |G(t,s,0,0)|^2
    +
    |\mathcal M_1\overline G_s(t)|^2
    +
    |Y(s)|^2
    +
    |Z(t,s)|^2
    \big)\,ds\,dt
    <\infty.
\end{flalign*}
Combining this domination with \eqref{eq:pointwise-convergence} and applying the dominated convergence theorem on $\Omega\times\Delta(0,T)$ yields \eqref{eq:L2-R-G}. Hence $\varepsilon_G^\pi\to0$, which completes the proof.
\end{proof}

\subsection{Proof of Theorem \ref{thm:holder-regularity-BSVIE}}

Let $\Pi=(\pi_n)_{n\geq1}$ be a sequence of partitions of $[0,T]$ such that $|\pi_n|\to0$. For each $\pi\in\Pi$, let $(\Psi_k^\pi,G_k^\pi)_{0\leq k\leq N-1}$ be the cell-averaged data defined in \eqref{eq:terminal-data-averaged}--\eqref{eq:generator-data-averaged}.

\medskip
\noindent\textit{Step 1: Assumption \ref{assumption-Holder-BSVIE} implies Assumption \ref{main assumption convergence rate BSVIEs, discrete setting}.}

We show that the cell-averaged data satisfy Assumption
\ref{main assumption convergence rate BSVIEs, discrete setting} uniformly
along $\Pi$.

For the terminal data, the linearity and closedness of the Malliavin derivative give
\begin{gather*}
    D_\theta\Psi_k^\pi
    =
    \frac{1}{\Delta_k}
    \int_{t_k}^{t_{k+1}}D_\theta\Psi(t)\,dt,
    \qquad
    D_{\theta'}D_\theta\Psi_k^\pi
    =
    \frac{1}{\Delta_k}
    \int_{t_k}^{t_{k+1}}D_{\theta'}D_\theta\Psi(t)\,dt.
\end{gather*}
Consequently, Jensen's inequality yields
\begin{gather*}
    \|\Psi^\pi\|_{2,p}
    \leq
    \|\Psi\|_{2,p},
    \qquad
    \mathfrak p_1^\Pi\leq\mathfrak p_1,
    \qquad
    \mathfrak p_2^\Pi\leq\mathfrak p_2,
    \qquad
    \mathfrak p_3^\Pi\leq\mathfrak p_3.
\end{gather*}
Thus, condition \emph{(i)} holds uniformly along $\Pi$.

For the generator, differentiation with respect to $(y,z)$ commutes with
the cell average. For instance, for $s\in[t_{k+1},T]$,
\begin{gather*}
    \partial_yG_k^\pi(s,y,z)
    =
    \frac{1}{\Delta_k}
    \int_{t_k}^{t_{k+1}}
    \partial_yG(a,s,y,z)\,da,
\end{gather*}
and the same identity holds for $\partial_zG_k^\pi$ and all second-order
partial derivatives, while these derivatives vanish on
$[t_k,t_{k+1})$. Hence the first and second-order partial derivatives of
$G^\pi$ are uniformly bounded, the generators are uniformly Lipschitz in
$(y,z)$, and the diagonal vanishes. Moreover, Jensen's inequality gives
\begin{gather*}
    \sup_{\pi\in\Pi}
    \norm{G^{\pi,0}}_{\mathcal H_\Delta^q}
    \leq
    \norm{G^0}_{\mathcal H_\Delta^q}.
\end{gather*}
This verifies condition \emph{(ii)}.

Similarly, for $s\in[t_{k+1},T]$,
\begin{gather*}
    \big(D_\theta G_k^\pi\big)(s,y,z)
    =
    \frac{1}{\Delta_k}
    \int_{t_k}^{t_{k+1}}
    D_\theta G(a,s,y,z)\,da,
\end{gather*}
while $D_\theta G_k^\pi=0$ on $[t_k,t_{k+1})$. The same averaging
identities hold for $D_\theta\partial_yG_k^\pi$,
$D_\theta\partial_zG_k^\pi$, and $D_{\theta'}D_\theta G_k^\pi$.
The required membership in the spaces $\mathbb L_a^{1,2}$, as well as
the continuity of the partial derivatives of $D_\theta G_k^\pi$ with
respect to $(y,z)$, therefore follow from the corresponding properties
of $G$ and Jensen's inequality.

For $\ell\in\{1,y,z\}$ and $t\in[t_k,t_{k+1})$, define
\begin{gather*}
    \mathcal G_\theta^{\ell,\pi}(t,s)
    \coloneqq
    \mathbf 1_{[t_{k+1},T]}(s)
    \frac{1}{\Delta_k}
    \int_{t_k}^{t_{k+1}}
    \mathcal G_\theta^\ell(a,s)\,da.
\end{gather*}
These fields provide the required pointwise bounds for
$D_\theta G^\pi$, $D_\theta\partial_yG^\pi$, and
$D_\theta\partial_zG^\pi$. Moreover,
\begin{gather*}
    \gamma_1^\Pi\leq\gamma_1,
    \qquad
    \gamma_y^\Pi\leq\gamma_y,
    \qquad
    \gamma_z^\Pi\leq\gamma_z.
\end{gather*}

Finally, for $\theta<\theta'$ and $t\in[t_k,t_{k+1})$, set
\begin{gather*}
    \mathcal K_{\theta,\theta'}^\pi(t,s)
    \coloneqq
    \mathbf 1_{[t_{k+1},T]}(s)
    \frac{1}{\Delta_k}
    \int_{t_k}^{t_{k+1}}
    \mathcal K_{\theta,\theta'}(a,s)\,da, \quad \text{and} \quad \mathcal G_{\theta,\theta'}^{2,\pi}(t,s)
    \coloneqq
    \mathbf 1_{[t_{k+1},T]}(s)
    \frac{1}{\Delta_k}
    \int_{t_k}^{t_{k+1}}
    \mathcal G_{\theta,\theta'}^2(a,s)\,da.
\end{gather*}
Then, for every $(y,z)\in\mathbb R^2$,
\begin{gather*}
    |D_\theta G^\pi(t,s,y,z)-D_{\theta'}G^\pi(t,s,y,z)|
    \leq
    \mathcal K_{\theta,\theta'}^\pi(t,s), \quad \text{and} \quad |D_{\theta'}D_\theta G^\pi(t,s,y,z)|
    \leq
    \mathcal G_{\theta,\theta'}^{2,\pi}(t,s).
\end{gather*}
Jensen's inequality and the continuous envelope bounds give
\begin{gather*}
    \kappa^\Pi\leq\kappa,
    \qquad
    \gamma_2^\Pi\leq\gamma_2.
\end{gather*}
Thus, condition \emph{(iii)} also holds uniformly along $\Pi$, and hence
Assumption \ref{main assumption convergence rate BSVIEs, discrete setting}
is satisfied.

\medskip
\noindent\textit{Step 2: Passage to the limit and choice of representative.}

By Step 1 and Theorem \ref{thm:holder-regularity-system-BSDEs}, there exists a constant $K>0$, independent of $\pi\in\Pi$, such that, for every $s,r\in[0,T]$,
\begin{gather*}
    \mathbb E\bigg[
    \bigg(
    \sum_{k\colon t_k\leq s\wedge r}
    \Delta_k
    \big|
    Z^\pi(t_k,s)-Z^\pi(t_k,r)
    \big|^2
    \bigg)^{\frac p2}
    \bigg]
    \leq
    K\big(
    |s-r|^{\frac p2}
    +
    |\pi|^{\frac p2}
    \big).
\end{gather*}
Since $Z^\pi(\tau(t),s)-Z^\pi(\tau(t),r)$ is constant in $t$ on each cell, and the possible partial last cell is bounded by the corresponding full cell, it follows that
\begin{gather}\label{eq:uniform-continuous-holder-Z-inside-proof}
    \mathbb E\bigg[
    \bigg(
    \int_0^{s\wedge r}
    \big|
    Z^\pi(\tau(t),s)-Z^\pi(\tau(t),r)
    \big|^2\,dt
    \bigg)^{\frac p2}
    \bigg]
    \leq
    K\big(
    |s-r|^{\frac p2}
    +
    |\pi|^{\frac p2}
    \big).
\end{gather}
By Lemma \ref{lemma:convergence-cell-average-data},
\begin{gather*}
    \mathbb E\bigg[
    \int_0^T\int_0^u
    |Z^\pi(\tau(t),u)-Z(t,u)|^2\,dt\,du
    \bigg]
    \longrightarrow0.
\end{gather*}
Hence the non-negative functions
\begin{gather*}
    u\longmapsto
    \mathbb E\bigg[
    \int_0^u
    |Z^\pi(\tau(t),u)-Z(t,u)|^2\,dt
    \bigg]
\end{gather*}
converge to zero in $L^1(0,T)$. Passing to a subsequence, still denoted by $\pi$, we may therefore assume that, for a.e. $u\in[0,T]$,
\begin{gather}\label{eq:slice-convergence-Z}
    \mathbb E\bigg[
    \int_0^u
    |Z^\pi(\tau(t),u)-Z(t,u)|^2\,dt
    \bigg]
    \longrightarrow0.
\end{gather}

Let $s$ and $r$ be such that \eqref{eq:slice-convergence-Z} holds. The reverse triangle inequality in $L^2(0,s\wedge r)$ gives
\begin{flalign*}
    \Bigg|
    \bigg(
    \int_0^{s\wedge r}
    |Z^\pi(\tau(t),s)-Z^\pi(\tau(t),r)|^2\,dt
    &\bigg)^{\frac12}
    -
    \bigg(
    \int_0^{s\wedge r}
    |Z(t,s)-Z(t,r)|^2\,dt
    \bigg)^{\frac12}
    \Bigg|
    \\ &\leq
    \bigg(
    \int_0^{s\wedge r}
    |Z^\pi(\tau(t),s)-Z(t,s)|^2\,dt
    \bigg)^{\frac12}
    +
    \bigg(
    \int_0^{s\wedge r}
    |Z^\pi(\tau(t),r)-Z(t,r)|^2\,dt
    \bigg)^{\frac12}.
\end{flalign*}
By \eqref{eq:slice-convergence-Z}, the right-hand side converges to zero in $L^2(\Omega)$. Passing to a further subsequence if necessary, the convergence holds $\mathbb P$-a.s. Fatou's lemma and \eqref{eq:uniform-continuous-holder-Z-inside-proof} then yield
\begin{flalign*}
    \mathbb E\bigg[
    \bigg(
    \int_0^{s\wedge r}
    |Z(t,s)-Z(t,r)|^2\,dt
    \bigg)^{\frac p2}
    \bigg]
    \leq
    \liminf_{|\pi|\to0}
    \mathbb E\bigg[
    \bigg(
    \int_0^{s\wedge r}
    |Z^\pi(\tau(t),s)-Z^\pi(\tau(t),r)|^2\,dt
    \bigg)^{\frac p2}
    \bigg]
    \leq
    K|s-r|^{\frac p2}.
\end{flalign*}
Therefore,
\begin{gather}\label{eq:ae-holder-Z}
    \mathbb E\bigg[
    \bigg(
    \int_0^{s\wedge r}
    |Z(t,s)-Z(t,r)|^2\,dt
    \bigg)^{\frac p2}
    \bigg]
    \leq
    K|s-r|^{\frac p2}
\end{gather}
for a.e. $(s,r)\in[0,T]^2$.

It remains to choose a representative for which the estimate holds for every $s,r\in[0,T]$. Fix $a\in[0,T)$ and set
\begin{gather*}
    E_a
    \coloneqq
    L^p(\Omega;L^2(0,a)).
\end{gather*}
Consider the map
\begin{gather*}
    [a,T]\ni u
    \longmapsto
    Z^a(u)
    \coloneqq
    \{Z(t,u)\}_{t\in[0,a]},
\end{gather*}
which takes values in $E_a$ for a.e. $u$. By \eqref{eq:ae-holder-Z},
\begin{gather*}
    \|Z^a(u)-Z^a(v)\|_{E_a}^p
    =
    \mathbb E\bigg[
    \bigg(
    \int_0^a
    |Z(t,u)-Z(t,v)|^2\,dt
    \bigg)^{\frac p2}
    \bigg]
    \leq
    K|u-v|^{\frac p2}
\end{gather*}
for a.e. $(u,v)\in[a,T]^2$. Since $E_a$ is complete, the usual extension argument provides a representative, still denoted by $Z^a$, satisfying
\begin{gather}\label{eq:holder-version-fixed-a}
    \|Z^a(u)-Z^a(v)\|_{E_a}^p
    \leq
    K|u-v|^{\frac p2},
    \qquad
    u,v\in[a,T].
\end{gather}

Apply this construction for every rational $a\in[0,T)$. If $0\leq b<a<T$ are rational, the restriction of $Z^a$ to $L^p(\Omega;L^2(0,b))$ and $Z^b$ are continuous maps on $[a,T]$ which coincide for a.e. $u$. They therefore coincide for every $u\in[a,T]$. Hence these representatives define a single representative of $Z$ for which \eqref{eq:holder-version-fixed-a} holds for every rational $a\in[0,T)$.

Finally, let $s,r\in[0,T]$. If $s\wedge r=0$, the estimate is immediate. Otherwise, choose rational numbers $a_m\uparrow s\wedge r$. Since $s,r\in[a_m,T]$, \eqref{eq:holder-version-fixed-a} gives
\begin{gather*}
    \mathbb E\bigg[
    \bigg(
    \int_0^{a_m}
    |Z(t,s)-Z(t,r)|^2\,dt
    \bigg)^{\frac p2}
    \bigg]
    \leq
    K|s-r|^{\frac p2}.
\end{gather*}
Letting $m\to\infty$ and applying the monotone convergence theorem concludes the proof.

\subsection{Proof of Corollary \ref{cor:holder-continuity-BSVIE}}

For every $n \in \mathbb N$, let us define the $n$-dyadic partition
\[
    \mathcal D_n
    \coloneqq
    \big\{
    t_j^n\coloneqq j \, h_n
    :
    j=0,\dots,2^n
    \big\},
    \qquad
    h_n\coloneqq T2^{-n}.
\]
For $j=0,\dots,2^n-1$, set
\begin{gather}\label{eq:K_n}
    \xi_{n,j}
    \coloneqq
    \bigg(
    \int_0^{t_j^n}
    \big|
    Z(t,t_{j+1}^n)-Z(t,t_j^n)
    \big|^2\,dt
    \bigg)^{\frac12} \qquad \text{and}
    \qquad
    K_n
    \coloneqq
    \max_{0\leq j\leq2^n-1}\xi_{n,j}.
\end{gather}
By Theorem \ref{thm:holder-regularity-BSVIE}, we have $\mathbb{E}|\xi_{n,j}|^p\leq Kh_n^{\frac p2}$. Since $K_n^p\leq\sum_{j=0}^{2^n-1}\xi_{n,j}^p$, it follows that $\mathbb{E}\big[|K_n|^p\big]^{\frac{1}{p}}\leq C h_n^{\frac12-\frac1p}$. Hence, for every $\gamma<\frac12-\frac1p$, we have that
\begin{gather}\label{eq:Cgamma}
   \mathbb{E}\big[ |C_\gamma'|^{p} \big]^{\frac{1}{p}}
    \leq
    C\sum_{n=0}^{\infty}
    h_n^{\frac12-\frac1p-\gamma}
    <
    \infty, \quad \text{where} \quad C_\gamma'
    \coloneqq
    \sum_{n=0}^{\infty}
    h_n^{-\gamma}K_n.
\end{gather}
We now apply the classical dyadic chaining argument. Let $s,r\in \mathcal D\coloneqq\bigcup_{n\geq0}\mathcal D_n$ with $s<r$, and choose
$m\geq0$ such that
\[
    h_{m+1}<r-s\leq h_m.
\]
For $n\geq m$, let $s_n$ and $r_n$ be the smallest points of
$\mathcal D_n$ greater than or equal to $s$ and $r$, respectively. Then
$s_n\downarrow s$ and $r_n\downarrow r$, and both sequences are eventually
constant. Moreover, $s_m$ and $r_m$ are either equal or adjacent in
$\mathcal D_m$, while, for every $n\geq m$, each of the pairs
$(s_n,s_{n+1})$ and $(r_n,r_{n+1})$ consists either of the same point or
of two adjacent points of $\mathcal D_{n+1}$.

The smaller endpoint of every dyadic pair appearing below is greater than
or equal to $s$. Hence, the corresponding estimate in
\eqref{eq:K_n} may be restricted to the common integration interval
$[0,s]$. Since the two sequences are eventually constant, the following
identity holds in $L^2(0,s)$:
\begin{flalign*}
    Z(\cdot,r)-Z(\cdot,s)
    &=
    \sum_{n=m}^{\infty}
    \big(
    Z(\cdot,r_{n+1})-Z(\cdot,r_n)
    \big)
    +
    \big(
    Z(\cdot,r_m)-Z(\cdot,s_m)
    \big)
    +
    \sum_{n=m}^{\infty}
    \big(
    Z(\cdot,s_n)-Z(\cdot,s_{n+1})
    \big).
\end{flalign*}
Therefore, by the triangle inequality in $L^2(0,s)$, the definition of
$K_n$ and $h_m=2h_{m+1}<2(r-s)$, one has that, outside a fixed null set,
\begin{gather}\label{eq:dyadic-holder-all-pairs}
    \bigg(
    \int_0^{s\wedge r}
    |Z(t,s)-Z(t,r)|^2\,dt
    \bigg)^{\frac12}
    \leq
    2^{\gamma+1}C_\gamma'|s-r|^\gamma,
    \qquad
    s,r\in\mathcal D,
\end{gather}

For $u\in[0,T]$, let $u_n\downarrow u$ be its right dyadic approximation. By
\eqref{eq:dyadic-holder-all-pairs}, $\{Z(\cdot,u_n)\}_{n\geq0}$ is almost surely Cauchy in $L^2(0,u)$, and denote its limit by $\widetilde Z(\cdot,u)$. The usual separability argument provides a jointly measurable representative. Moreover, Theorem \ref{thm:holder-regularity-BSVIE} implies that $Z(\cdot,u_n)\to Z(\cdot,u)$ in probability in $L^2(0,u)$. Hence, by
uniqueness of limits in probability, $\widetilde Z$ is a version of $Z$.

Finally, approximating arbitrary $s,r\in[0,T]$ by right dyadic points and
passing to the limit in \eqref{eq:dyadic-holder-all-pairs}, we obtain
\[
    \bigg(
    \int_0^{s\wedge r}
    |\widetilde Z(t,s)-\widetilde Z(t,r)|^2\,dt
    \bigg)^{\frac12}
    \leq
    2^{\gamma+1}C_\gamma'|s-r|^\gamma.
\]
Setting $C_\gamma\coloneqq2^{\gamma+1}C_\gamma'$ concludes the proof.

\subsection{Examples}\label{section:examples}

We conclude the section with two examples of situations in which Assumption \ref{assumption-Holder-BSVIE} is satisfied. The proofs are given in \ref{section:proof-example}.

\begin{example}\label{example-1}
Assume that $(\Psi, G)$ are such that:
\begin{itemize}
    \item $G \colon \Delta(0,T) \times \mathbb{R} \times \mathbb{R} \to \mathbb{R}$ is a deterministic function with continuous and uniformly bounded first- and second-order partial derivatives with respect to $y$ and $z$, and
    \begin{gather*}
        \int_0^T \int_t^T |G(t,s,0,0)|^2 ds dt < \infty;
    \end{gather*}
    \item The terminal datum $\Psi = (\Psi(t))_{t \in [0,T]}$ is given by a time-varying chaos expansion of finite order, i.e.
    \begin{gather*}
        \Psi(t)
        =
        \sum_{k=0}^{M}
        \int_{[0,T]^{k}}
        g_k(s_1, \dots, s_k, t)
        \, dB(s_1) \cdots dB(s_k),
    \end{gather*}
    where the integrals are understood as multiple Wiener integrals. For each $k \in \{0, \dots, M\}$, $g_k \in L^2([0,T]^{k+1})$ is symmetric in the first $k$ variables. We assume that the kernels satisfy the following conditions. First, there exists a constant $C>0$, independent of $\theta,\theta'$, such that, for all $\theta,\theta'\in[0,T]$,
\begin{gather}\label{eq:example-1-p1-kernel}
    \sum_{k=1}^{M}
    \int_0^T
    \big\|
    g_k(\cdot,\theta,t)-g_k(\cdot,\theta',t)
    \big\|_{L^2([0,T]^{k-1})}^2\,dt
    \leq
    C|\theta-\theta'|.
\end{gather}
Moreover,
\begin{gather}\label{eq:example-1-p3-kernel}
    \sup_{\theta,\theta'\in[0,T]}
    \sum_{k=2}^{M}
    \int_0^T
    \big\|
    g_k(\cdot,\theta,\theta',t)
    \big\|_{L^2([0,T]^{k-2})}^2\,dt
    <\infty,
\end{gather}
and
\begin{flalign}\label{eq:example-1-eq-1}
    \sup_{\theta,t\in[0,T]}
    \sum_{k=1}^{M}
    \big\|
    g_k(\cdot,\theta,t)
    \big\|_{L^2([0,T]^{k-1})}^2
    &<\infty.
\end{flalign}
Finally, for some $\eta>1/q$, there exists a constant $C>0$, independent of $\theta,t$ and $r$, such that
\begin{gather}\label{eq:example-1-eq-2}
    \sup_{\theta\in[0,T]}
    \sum_{k=1}^{M}
    \big\|
    g_k(\cdot,\theta,t)-g_k(\cdot,\theta,r)
    \big\|_{L^2([0,T]^{k-1})}
    \leq
    C|t-r|^\eta,
    \quad t,r\in[0,T].
\end{gather}
\end{itemize}
\end{example}

\begin{example}[Forward--backward system]\label{example-2}
    Let $X(0)=x_0\in\mathbb R$ and consider the decoupled system
    \begin{flalign*}
        X(t) &= X(0) + \int_0^t b(s, X(s)) ds + \int_0^t \sigma(s, X(s)) dB(s) \\
        Y(t) &= \psi\big(t,\varphi(t),\varphi(T)\big) + \int_t^T g(t, s, X(t), X(s), Y(s), Z(t,s)) ds - \int_t^T Z(t,s) dB(s),
    \end{flalign*}
    with 
    \begin{gather*}
        \varphi(t) \coloneqq \int_0^t h(r,X(r))\,dr.
    \end{gather*}
    We make the following assumptions:
    \begin{itemize}
    \item $b,\sigma$ are deterministic functions, measurable in $t$ and twice continuously differentiable with respect to $x$, with uniformly bounded first- and second-order partial derivatives with respect to $x$. Moreover,
\begin{gather*}
    \sup_{t\in[0,T]}
    \big(
    |b(t,0)|+|\sigma(t,0)|
    \big)
    <\infty.
\end{gather*}
    \item there exists a constant $C>0$ such that, for every $s,t\in[0,T]$ and $x\in\mathbb R$,
    \begin{flalign*}
        |\sigma(t,x)-\sigma(s,x)|
        \leq
        C|t-s|^{\frac12};
    \end{flalign*}
    \item $h:[0,T]\times\mathbb R\to\mathbb R$ is measurable in the time variable and twice continuously differentiable in the space variable. Moreover, $h$, $\partial_xh$, and $\partial_{xx}h$ have polynomial growth in $x$, uniformly in $t$;

    \item $\psi:[0,T]\times\mathbb R^2\to\mathbb R$ is measurable in the first time variable and twice continuously differentiable in the last two variables. Moreover, $\psi$ and its first and second-order partial derivatives with respect to the last two variables have polynomial growth, uniformly in $t$;
    
    \item $g\colon\Delta(0,T)\times\mathbb R^4\to\mathbb R$ is measurable in $(t,s)$ and twice continuously differentiable in $(x_1,x_2,y,z)$. Moreover, all first and second-order partial derivatives of $g$ with respect to $(x_1,x_2,y,z)$ are uniformly bounded in $(t,s,x_1,x_2,y,z)$, and
\begin{gather*}
    \int_0^T\int_t^T
    |g(t,s,0,0,0,0)|^2\,ds\,dt
    <\infty.
\end{gather*}
\end{itemize}
The same argument also covers generators with a similar path-dependent structure, for instance generators depending on integral functionals of the forward process, as in the terminal condition above. Similarly, the terminal datum may depend directly on $X(t)$ and $X(T)$, for example through
\[
    \psi\big(t,X(t),X(T),\varphi(t),\varphi(T)\big),
\]
provided the corresponding derivatives satisfy analogous polynomial-growth conditions.
\end{example}

\section{Numerical approximation of the BSVIE}\label{section:numerical-approx}

In this section we propose an explicit Euler-type scheme for the approximating BSDE system and use the regularity estimates from the previous sections to obtain a convergence rate.

\subsection{Left-point approximation of the BSVIE data}

The most direct way to construct the discrete data is to evaluate the BSVIE coefficients at the grid points. This choice is more convenient from a numerical point of view than the cell-average approximation, since the latter is usually not directly implementable.

More precisely, given the data $(\Psi,G)$ and a partition $\pi$ of $[0,T]$, we approximate the terminal datum by left-point evaluation and the generator by left-point evaluation away from the diagonal cell, where it is set equal to zero. For each $0\leq k\leq N-1$, define
\begin{gather}\label{eq:left-point-data}
    \Psi_k^\pi \coloneqq \Psi(t_k),
    \qquad
    G_k^\pi(s,y,z)
    \coloneqq
    \begin{cases}
    0, & s\in[t_k,t_{k+1}), \\[0.4em]
    G(t_k,s,y,z), & s\in[t_{k+1},T].
    \end{cases}
\end{gather}
The convergence rate for the left-point approximation will be obtained under the following continuity assumption on the data in the time variables.
\begin{assumption}\label{assumption:continuity-nodal-data}
The assumptions of Theorem \ref{thm:well-posedness-BSVIE} hold. Moreover, there exists an increasing continuous function
$\rho\colon[0,\infty)\to[0,\infty)$, with $\rho(0)=0$, such that, for every
$(t,s),(t',s')\in\Delta(0,T)$ and every $(X_1 ,X_2)\in L^2(\Omega)\times L^2(\Omega)$,
\begin{flalign*}
    \mathbb{E} \Big[
    &\big| \Psi(t) - \Psi(t')\big|^2
     +
    \big| G(t, s, X_1, X_2) - G(t', s', X_1, X_2)\big|^2
    \Big]^{\frac{1}{2}}
    \\ &\leq
    \Big(
    \rho\big(|t-t'|\big) + \rho\big(|s-s'|\big)
    \Big)
    \Big\{
    \norm{\Psi}_{2,2}
    +
    \norm{G^0}_{\mathcal{H}_\Delta^2} 
    +
    \mathbb{E}\big[ |X_1|^2 \big]^{\frac{1}{2}}
    +
    \mathbb{E}\big[ |X_2|^2 \big]^{\frac{1}{2}}
    \Big\}. 
\end{flalign*}
\end{assumption}

We now show that the BSDE system associated with the left-point data converges to the original BSVIE with rate controlled by the modulus $\rho$.

\begin{lemma}\label{lemma:convergence-left-point-data}
    Let Assumption \ref{assumption:continuity-nodal-data} hold, and let $(Y,Z)$ be the unique solution to the BSVIE \eqref{eq:bsvie-intro}. Let $(\Psi^\pi,G^\pi)$ be the discrete data given by \eqref{eq:left-point-data}. Then the BSDE system \eqref{def:BSDE-app-BSVIE} admits a unique solution $(Y^\pi,Z^\pi)$. Moreover, there exists a constant $C>0$, independent of $\pi$, such that
    \begin{flalign*}
        \mathbb E\Bigg[
        \int_0^T |Y(t)-Y^\pi(\tau(t),t)|^2\,dt
        +
        \int_0^T\int_t^T
        |Z(t,s)-Z^\pi(\tau(t),s)|^2\,ds\,dt
        \Bigg]
        \leq
        C\big(\rho(|\pi|)^2+|\pi|\big).
    \end{flalign*}
\end{lemma}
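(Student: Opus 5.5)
The plan is to reduce the statement to Theorem \ref{thm:convergence-BSDE-system-BSVIE} with $p=2$, and then bound the two data errors $\varepsilon_\Psi^\pi$ and $\varepsilon_G^\pi$ by $C(\rho(|\pi|)^2+|\pi|)$.

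\emph{Applicability of the earlier results.} Well-posedness of the system for the left-point data comes from Theorem \ref{theorem:energy-estimates}. We need $\Psi^\pi\in L^2(\Omega;L^2(0,T))$ and $G^{\pi,0}\in\mathcal H_\Delta^2(0,T)$, with Lipschitz constant $[G^\pi]_L\le[G]_L$ independent of $\pi$. The Lipschitz bound is immediate from \eqref{eq:left-point-data}. For the integrability, Assumption \ref{assumption:continuity-nodal-data} with $X_1=X_2=0$ gives
\[
\mathbb E|\Psi(t_k)-\Psi(t)|^2+\mathbb E|G(t_k,s,0,0)-G(t,s,0,0)|^2\le C\rho(T)^2
\]
uniformly, and this is enough. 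A slight technicality is that $(t_k,s)$ lies in $\Delta(0,T)$ because $s\ge t_{k+1}>t_k$.

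\emph{The free-term error.} For $t\in[t_k,t_{k+1})$, Assumption \ref{assumption:continuity-nodal-data} gives
\[
\mathbb E|\Psi(t)-\Psi(t_k)|^2\le C\rho(|\pi|)^2,
\]
since $\rho$ is increasing. Integrating over $t$ yields $\varepsilon_\Psi^\pi\le C\rho(|\pi|)^2$.

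\emph{The generator error.} Fix $t\in[t_k,t_{k+1})$. By Cauchy--Schwarz,
\[
\varepsilon_G^\pi\le T\,\mathbb E\int_0^T\int_t^T|R^\pi(t,s)|^2\,ds\,dt,
\qquad R^\pi(t,s)\coloneqq G(t,s,Y(s),Z(t,s))-G^\pi(t,s,Y(s),Z(t,s)).
\]
I split the inner integral into two pieces.

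\begin{itemize}
\item \emph{Diagonal cell, $s\in[t,t_{k+1})$.} Here $G^\pi=0$, so $|R^\pi|\le|G^0(t,s)|+[G]_L(|Y(s)|+|Z(t,s)|)$. The $Y$ term contributes at most $C|\pi|\,\mathbb E\int_0^T|Y|^2\le C|\pi|$ after integrating over $s$ and then $t$.

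The $G^0$ and $Z$ terms live on the thin strip $\{s-t\le|\pi|\}$, and this is the main obstacle. Mere integrability only gives $o(1)$ there, not $O(|\pi|)$. I would first try to handle them using the continuity assumption. Assumption \ref{assumption:continuity-nodal-data} in $s$ gives the pointwise bound $\mathbb E|G^0(t,s)|^2\le C$ uniformly in $(t,s)$, by comparing with a fixed point. The strip contribution of $G^0$ is then $\le C|\pi|$.

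For $Z$, I would use the standard BSDE bound: for fixed $t$, $Z(t,\cdot)$ is the martingale integrand of a BSDE on $[t,T]$. Its local energy satisfies
\[
\mathbb E\int_t^{t+h}|Z(t,s)|^2\,ds\le C\Big(\mathbb E\sup_{u}|Y(t,u)|^2 \cdot\text{(increment)}\Big).
\]
This is more naturally obtained by applying Itô to $|Y(t,u)|^2$ on $[t,t+h]$, which gives
\[
\mathbb E\int_t^{t+h}|Z(t,s)|^2\,ds=\mathbb E|Y(t,t+h)|^2-\mathbb E|Y(t,t)|^2+2\,\mathbb E\int_t^{t+h}Y(t,s)\,G\,ds.
\]
Combining this with continuity of $\Psi$ and $G$ in $t$, through the dual representation $Y(t,u)-Y(t,t)$, I aim for a bound $C\,h$ up to $\rho$-terms. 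If this fails, I would accept $\rho(|\pi|)^2+|\pi|$ by comparing $Y(t,\cdot)$ with $Y(t_k,\cdot)$ using the assumed modulus.

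\item \emph{Off-diagonal part, $s\ge t_{k+1}$.} Here
\[
|R^\pi|=|G(t,s,Y(s),Z(t,s))-G(t_k,s,Y(s),Z(t,s))|.
\]
Assumption \ref{assumption:continuity-nodal-data} with $X_1=Y(s)$ and $X_2=Z(t,s)$ gives
\[
\mathbb E|R^\pi|^2\le C\rho(|\pi|)^2\big(1+\mathbb E|Y(s)|^2+\mathbb E|Z(t,s)|^2\big).
\]
Integrating and using \eqref{eq:energy-estimates-BSVIE-solution} yields $C\rho(|\pi|)^2$.
\end{itemize}

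Collecting the free-term, diagonal-cell and off-diagonal bounds gives $\varepsilon_\Psi^\pi+\varepsilon_G^\pi\le C(\rho(|\pi|)^2+|\pi|)$, and Theorem \ref{thm:convergence-BSDE-system-BSVIE} concludes.
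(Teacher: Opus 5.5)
Your treatment of well-posedness, of $\varepsilon_\Psi^\pi$, of the off-diagonal part, and of the $G^0$ and $Y$ contributions on the diagonal cell is fine. The diagonal-cell $Z$-contribution, however, is a genuine gap, and your first step creates it. The error $\varepsilon_G^\pi$ involves $\bigl(\int_t^T|R^\pi(t,s)|\,ds\bigr)^2$, which is an $L^1$-in-$s$ quantity. Applying Cauchy--Schwarz over all of $[t,T]$ with the factor $T$ discards the smallness of the strip. You are then forced to prove
\[
\mathbb E\int_0^T\int_t^{\tau^\star(t)}|Z(t,s)|^2\,ds\,dt\le C\bigl(\rho(|\pi|)^2+|\pi|\bigr).
\]
This is false in general. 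Take $G(t,s,y,z)=z$ and $\Psi(t)=\int_0^T f(r-t)\,dB(r)$ with $f(u)=\log(1/|u|)\mathbf 1_{\{|u|\le 1\}}$. One checks directly that $Y(t,s)=\int_0^s f(r-t)\,dB(r)+\int_s^T f(r-t)\,dr$ and $Z(t,s)=f(s-t)$ solve the equation. By scaling near the singularity, $\mathbb E|\Psi(t)-\Psi(t')|^2\le\int_{\mathbb R}|f(x+\delta)-f(x)|^2\,dx\le C\delta$ with $\delta=|t-t'|$, so Assumption \ref{assumption:continuity-nodal-data} holds with $\rho(\delta)=C\sqrt{\delta}$. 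Yet for a uniform mesh $h$ the strip mass above is of order $h\log^2(1/h)$, not $O(h)$. Your It\^o identity fails for the same reason: it would need $\mathbb E|Y(t,t+h)|^2-\mathbb E|Y(t,t)|^2=O(h)$. The fallback of ``comparing $Y(t,\cdot)$ with $Y(t_k,\cdot)$'' is not an argument. The global Cauchy--Schwarz step is harmless when only $o(1)$ is needed, as in Lemma \ref{lemma:convergence-cell-average-data}, but it loses the rate here.

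The missing idea is to use the length of the strip before squaring. First split with $(a+b)^2\le 2a^2+2b^2$ into the off-diagonal part and the diagonal-strip part. Then, since $\tau^\star(t)-t\le|\pi|$, Cauchy--Schwarz on $[t,\tau^\star(t)]$ gives
\[
\mathbb E\int_0^T\Bigl(\int_t^{\tau^\star(t)}|G(t,s,Y(s),Z(t,s))|\,ds\Bigr)^2dt\le|\pi|\,\mathbb E\int_0^T\int_t^{T}|G(t,s,Y(s),Z(t,s))|^2\,ds\,dt\le C|\pi|.
\]
The last bound follows from the Lipschitz property, $G^0\in\mathcal H_\Delta^2$, and the global energy estimate \eqref{eq:energy-estimates-BSVIE-solution}. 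This is exactly the paper's argument. It needs no local bound on $Z$ near the diagonal and no pointwise bound on $\mathbb E|G^0(t,s)|^2$. Your off-diagonal estimate, where using the factor $T$ is harmless, then completes the proof.
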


\begin{proof}
The discrete data \eqref{eq:left-point-data} satisfy the assumptions of Theorem \ref{theorem:energy-estimates}, with Lipschitz constant bounded by that of $G$. Hence the BSDE system \eqref{def:BSDE-app-BSVIE} admits a unique solution.

By Theorem \ref{thm:convergence-BSDE-system-BSVIE}, it is enough to estimate $\varepsilon_\Psi^\pi$ and $\varepsilon_G^\pi$. The terminal error is immediate from Assumption \ref{assumption:continuity-nodal-data}:
\[
    \varepsilon_\Psi^\pi
    =
    \mathbb E\int_0^T
    |\Psi(t)-\Psi(\tau(t))|^2\,dt
    \leq
    C\rho(|\pi|)^2.
\]
We now estimate the generator error. Let $\tau^{\star}(t)$ denote the right endpoint of the cell containing $t$. Since the left-point generator vanishes on the diagonal cell, using $(a+b)^2\leq2a^2+2b^2$, the generator error is bounded by the sum of the off-diagonal contribution and the diagonal-strip contribution.

For the off-diagonal part, Cauchy's inequality and Assumption \ref{assumption:continuity-nodal-data} give
\[
    \mathbb E\int_0^T
    \left(
    \int_{\tau^{\star}(t)}^T
    \big|
    G(t,s,Y(s),Z(t,s))
    -
    G(\tau(t),s,Y(s),Z(t,s))
    \big|\,ds
    \right)^2dt
    \leq
    C\rho(|\pi|)^2,
\]
where we used the energy estimate for the BSVIE solution. For the diagonal strip, again by Cauchy's inequality,
\begin{flalign*}
    \mathbb E\int_0^T
    \left(
    \int_t^{\tau^{\star}(t)}
    |G(t,s,Y(s),Z(t,s))|\,ds
    \right)^2dt
    \leq
    |\pi|\,
    \mathbb E\int_0^T\int_t^{\tau^{\star}(t)}
    |G(t,s,Y(s),Z(t,s))|^2\,ds\,dt.
\end{flalign*}
The last term is bounded by $C|\pi|$, using the Lipschitz property of $G$, the integrability of $G(\cdot,\cdot,0,0)$, and the BSVIE energy estimate. Therefore,
\[
    \varepsilon_G^\pi
    \leq
    C\big(\rho(|\pi|)^2+|\pi|\big).
\]
The desired estimate now follows from Theorem \ref{thm:convergence-BSDE-system-BSVIE}.
\end{proof}

\subsection{An explicit Euler scheme}

We now introduce an explicit Euler-type approximation of the BSDE system \eqref{def:BSDE-app-BSVIE}, inspired by the scheme in \cite{zhang_bsde_method}. Fix a partition $\pi$. For each $k\in\{0,\dots,N-1\}$, set
\begin{flalign*}
    \mathcal Y^\pi(t_k,t_N)=\Psi_k^\pi.
\end{flalign*}
Then, for $l\in\{k,\dots,N-1\}$ and $s\in[t_l,t_{l+1})$, define
\begin{flalign}
    \mathcal{Y}^{\pi}(t_{k}, s)
    =
    \mathcal{Y}^{\pi}(t_k, t_{l+1})
    +
    \mathbf 1_{\{l\geq k+1\}}
    \Delta_l
    G_{k}^{\pi} \Big(
    t_{l+1},
    \mathcal{Y}^{\pi}&(t_{l}, t_{l+1}),
    \scalar{\mathcal{Z}^{\pi}}_{k,l+1}
    \Big) 
    -
    \int_s^{t_{l+1}}
    \mathcal{Z}^\pi(t_k, r)\,dB(r),
    \label{eq:euler-scheme}
\end{flalign}
where, for $l\in\{k,\dots,N-1\}$,
\begin{flalign*}
    \scalar{\mathcal{Z}^{\pi}}_{k,l}
    \coloneqq
    \frac{1}{\Delta_l}
    \mathbb{E}_{t_l}
    \bigg[
    \int_{t_l}^{t_{l+1}}
    \mathcal{Z}^\pi(t_k, r)\,dr
    \bigg].
\end{flalign*}
We use the convention $\scalar{\mathcal{Z}^\pi}_{k,N}=0$. The indicator $\mathbf 1_{\{l\geq k+1\}}$ implements the convention that the generator vanishes on the diagonal cell. The use of the next-cell conditional average $\scalar{\mathcal{Z}^{\pi}}_{k,l+1}$ in the $Z$-variable makes the scheme explicit.

\begin{remark}
Let
\begin{flalign*}
    F_{k,l}^\pi
    \coloneqq
    \mathcal{Y}^{\pi}(t_k,t_{l+1})
    +
    \mathbf 1_{\{l\geq k+1\}}
    \Delta_l
    G_k^\pi
    \Big(
    t_{l+1},
    \mathcal{Y}^{\pi}(t_l,t_{l+1}),
    \scalar{\mathcal Z^\pi}_{k,l+1}
    \Big).
\end{flalign*}
Then $F_{k,l}^\pi$ is $\mathcal F_{t_{l+1}}$-measurable, and \eqref{eq:euler-scheme} can be written equivalently as
\begin{flalign*}
    \mathcal Y^\pi(t_k,s)
    =
    \mathbb E_s\big[F_{k,l}^\pi\big],
    \qquad
    s\in[t_l,t_{l+1}).
\end{flalign*}
Moreover,
\begin{flalign*}
    \scalar{\mathcal Z^\pi}_{k,l}
    =
    \frac{1}{\Delta_l}
    \mathbb E_{t_l}
    \big[
    F_{k,l}^\pi (B(t_{l+1})-B(t_l))
    \big].
\end{flalign*}
This conditional-expectation representation of
$(\mathcal Y^\pi,\scalar{\mathcal Z^\pi})$ is standard in Euler-type schemes for BSDEs.
\end{remark}

We measure the error between the BSDE system \eqref{def:BSDE-app-BSVIE} and the Euler scheme \eqref{eq:euler-scheme} by
\begin{flalign*}
    \mathcal{E}^{Y}(\pi)
    &\coloneqq
    \mathbb{E}\bigg[
    \int_0^T
    \big|
    Y^{\pi}(\tau(t), t)-\mathcal{Y}^\pi(\tau(t), t)
    \big|^2\,dt
    \bigg],
    \\
    \mathcal{E}^{Z}(\pi)
    &\coloneqq
    \mathbb{E}\bigg[
    \int_0^T
    \int_t^T
    \big|
    Z^\pi(\tau(t), s)-\mathcal{Z}^{\pi}(\tau(t), s)
    \big|^2\,ds\,dt
    \bigg].
\end{flalign*}
For $0\leq k\leq l\leq N-1$, we also define the conditional cell averages of the exact martingale integrand by
\begin{flalign*}
    \scalar{Z^\pi}_{k,l}
    \coloneqq
    \frac{1}{\Delta_l}
    \mathbb E_{t_l}
    \bigg[
    \int_{t_l}^{t_{l+1}}
    Z^\pi(t_k,r)\,dr
    \bigg].
\end{flalign*}
These quantities are well defined independently of the choice of representative of $Z^\pi(t_k,\cdot)$.

The next result shows that the Euler error is controlled by the diagonal-cell contribution and by the time-regularity of the martingale component $Z^\pi$ along the partition. The proof follows the classical Euler scheme argument for BSDEs and is deferred to \ref{section:proof-euler}.

\begin{theorem}\label{thm:euler-error-bsde-system}
Let Assumption \ref{assumption:continuity-nodal-data} hold, and let $(\Psi^\pi,G^\pi)$ be the left-point data defined in \eqref{eq:left-point-data}. Assume also that there exists a constant $L>0$, independent of $\pi$, such that $\max_{0\leq k\leq N-2} \Delta_k/\Delta_{k+1}\leq L$.
Then there exists a constant $C>0$, independent of $\pi$, such that, for $|\pi|$ sufficiently small,
\begin{flalign*}
    \mathcal E^Y(\pi)+\mathcal E^Z(\pi)
    &\leq
    C\bigg(
    \rho(|\pi|)^2
    +
    |\pi|
    +
    \sum_{k=0}^{N-1}
    \Delta_k
    \mathbb E\bigg[
    \int_{t_k}^{t_{k+1}}
    |Z^\pi(t_k,s)|^2\,ds
    \bigg]
    \\
    &\quad+
    \sum_{k=0}^{N-2}
    \Delta_k
    \sum_{l=k+1}^{N-2}
    \mathbb E\bigg[
    \int_{t_l}^{t_{l+1}}
    \big|
    Z^\pi(t_k,s)-\scalar{Z^\pi}_{k,l+1}
    \big|^2\,ds
    \bigg]
    \bigg).
\end{flalign*}
\end{theorem}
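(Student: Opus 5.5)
We follow the classical Euler-scheme error analysis for BSDEs in the style of \cite{zhang_bsde_method}, applied componentwise to the system \eqref{def:BSDE-app-BSVIE} and then summed with the weights $\Delta_k$.

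\textbf{Step 1: one-step error relation.} Fix $k$ and $l\geq k$. On $[t_l,t_{l+1})$ write the exact system in the form
\[
Y^\pi(t_k,t_l)=Y^\pi(t_k,t_{l+1})+\int_{t_l}^{t_{l+1}}G_k^\pi\big(r,Y^\pi(t_l,r),Z^\pi(t_k,r)\big)dr-\int_{t_l}^{t_{l+1}}Z^\pi(t_k,r)dB(r).
\]
Subtract \eqref{eq:euler-scheme} at $s=t_l$. Set $\delta Y_{k,l}\coloneqq Y^\pi(t_k,t_l)-\mathcal Y^\pi(t_k,t_l)$ and $\delta Z_k\coloneqq Z^\pi(t_k,\cdot)-\mathcal Z^\pi(t_k,\cdot)$. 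This gives
\[
\delta Y_{k,l}+\int_{t_l}^{t_{l+1}}\delta Z_k\,dB=\delta Y_{k,l+1}+\Delta_l\big[G_k^\pi(t_{l+1},Y^\pi(t_l,t_{l+1}),\langle Z^\pi\rangle_{k,l+1})-G_k^\pi(t_{l+1},\mathcal Y^\pi(t_l,t_{l+1}),\langle\mathcal Z^\pi\rangle_{k,l+1})\big]+R_{k,l}.
\]
The local truncation error $R_{k,l}$ collects three differences:
\begin{itemize}
\item $G_k^\pi(r,\cdot)$ versus $G_k^\pi(t_{l+1},\cdot)$, controlled by $\rho(|\pi|)$ through Assumption \ref{assumption:continuity-nodal-data};
\item $Y^\pi(t_l,r)$ versus $Y^\pi(t_l,t_{l+1})$, controlled by $|\pi|$ through the standard $L^2$ time regularity of $Y$, which follows from the BSDE itself;
\item $Z^\pi(t_k,r)$ versus $\langle Z^\pi\rangle_{k,l+1}$, which is exactly the last sum in the statement.
\end{itemize}
Squaring and taking conditional expectations with the usual $(1+\gamma\Delta_l)$ Young splitting and Itô isometry yields
\[
\mathbb E|\delta Y_{k,l}|^2+\tfrac12\mathbb E\!\int_{t_l}^{t_{l+1}}|\delta Z_k|^2\le(1+C\Delta_l)\mathbb E|\delta Y_{k,l+1}|^2+C\Delta_l\mathbb E|\delta Y_{l,l+1}|^2+C\Delta_l^2\mathbb E|\langle\delta Z\rangle_{k,l+1}|^2+C\mathbb E|R_{k,l}|^2.
\]

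\textbf{Step 2: the shifted $Z$-average.} The key technical point is the explicit use of $\langle\mathcal Z^\pi\rangle_{k,l+1}$ from the \emph{next} cell. By Jensen, $\Delta_{l+1}|\langle\delta Z\rangle_{k,l+1}|^2\le\mathbb E_{t_{l+1}}\int_{t_{l+1}}^{t_{l+2}}|\delta Z_k|^2$. The term $C\Delta_l^2|\langle\delta Z\rangle_{k,l+1}|^2$ is therefore at most $CL\Delta_l\,\mathbb E\int_{t_{l+1}}^{t_{l+2}}|\delta Z_k|^2$, using the mesh-ratio bound $\Delta_l/\Delta_{l+1}\le L$. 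This is a factor $\Delta_l$ times the $Z$-error on the next cell. Summing backward in $l$, it is absorbed into the left-hand side once $|\pi|$ is small enough that $CL|\pi|\le\frac14$. The restriction to sufficiently small $|\pi|$ enters here.

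\textbf{Step 3: discrete Gronwall for the coupled triangular system.} The terms $\mathbb E|\delta Y_{l,l+1}|^2$ couple different rows $k$. Summing the one-step relation over $l$ from $k$ to $N-1$, with terminal error $0$ since both schemes start from $\Psi_k^\pi$, bounds $\max_l\mathbb E|\delta Y_{k,l}|^2+\mathbb E\int_{t_k}^T|\delta Z_k|^2$ by
\[
C\sum_{l>k}\Delta_l\,\mathbb E|\delta Y_{l,l+1}|^2+C\sum_l\mathbb E|R_{k,l}|^2.
\]
We then set $e_l\coloneqq\max_{m}\mathbb E|\delta Y_{l,m}|^2$ and apply a backward discrete Gronwall inequality in the index $l$, exactly as in the proof of Theorem \ref{theorem:energy-estimates}. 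Multiplying by $\Delta_k$, summing over $k$, and passing from grid values to $\mathcal E^Y(\pi)$ using the intra-cell martingale representation produces the stated bound.

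\textbf{Expected main obstacle.} The difficulty is bookkeeping the residual $\sum_l\mathbb E|R_{k,l}|^2$ after weighting with $\Delta_k$:
\begin{itemize}
\item The diagonal cell $l=k$ has no generator. There the error is just $\mathbb E\int_{t_k}^{t_{k+1}}|Z^\pi(t_k,s)|^2ds$, which gives the third term in the bound.
\item For the last cell $l=N-1$, the convention $\langle\mathcal Z^\pi\rangle_{k,N}=0$ gives an extra $\Delta_{N-1}$-order term, which is absorbed into $C|\pi|$.
\end{itemize}
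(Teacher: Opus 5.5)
Your outline follows the same route as the paper's proof in Appendix~\ref{section:proof-euler}. Both use the one-step identity via Itô's isometry, a Young splitting, Jensen plus the mesh ratio for the shifted average, a first Gronwall argument in $l$, a second Gronwall argument for the row coupling through $\mathbb E|\delta Y_{l,l+1}|^2$, and the intra-cell martingale representation. Two steps, however, are wrong as written.

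\textbf{Step~2: the order of the shifted-average term.} The $(1+\gamma\Delta_l)$ splitting puts the factor $1+(\gamma\Delta_l)^{-1}$ on everything except $\delta Y_{k,l+1}$. So the term $\Delta_l[G(\cdots)-G(\cdots)]$ contributes $C(\Delta_l^2+\Delta_l/\gamma)\big(\mathbb E|\delta Y_{l,l+1}|^2+\mathbb E|\langle\delta Z\rangle_{k,l+1}|^2\big)$. Both increments come from the same Lipschitz bound. Since $\delta Y_{k,l+1}$ and $\langle\delta Z\rangle_{k,l+1}$ are both $\mathcal F_{t_{l+1}}$-measurable, there is no orthogonality that would upgrade $\Delta_l$ to $\Delta_l^2$ for the $Z$-part. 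After Jensen and $\Delta_l/\Delta_{l+1}\le L$, this term is $CL(\Delta_l+\gamma^{-1})\,z_{k,l+1}$, with $z_{k,l+1}=\mathbb E\int_{t_{l+1}}^{t_{l+2}}|\delta Z_k|^2\,ds$. That is an $O(1)$ multiple of the next-cell error, so $CL|\pi|\le\frac14$ alone absorbs nothing.

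You must also take $\gamma$ large (the paper's $\varepsilon=(4C_1)^{-1}$, i.e.\ $\gamma=4C_1$) so that the coefficient is at most $\tfrac12$. Then absorb it into $I_{k,l+1}=y_{k,l+1}+z_{k,l+1}$, i.e.\ into the Itô-isometry term of the \emph{next} step; this is how \eqref{eq:euler-local-recursion} arises. For the same reason, the residual enters as $C\Delta_l^{-1}\mathbb E|R_{k,l}|^2\le C\,\mathbb E\int_{t_l}^{t_{l+1}}|\cdot|^2\,dr$, not as $C\,\mathbb E|R_{k,l}|^2$. Taken literally, your inequality would give the oscillation term with a spurious extra factor $|\pi|$.

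\textbf{Step~1: the freezing error and the diagonal-cell term.} The freezing error $Y^\pi(t_l,r)-Y^\pi(t_l,t_{l+1})$ is not $O(|\pi|)$ under Assumption~\ref{assumption:continuity-nodal-data}. Also, the diagonal-cell term does not come from the cell $l=k$. On $[t_k,t_{k+1})$ both the system and the scheme have zero generator, so no local error is created there. One simply has $\delta Y(t_k,s)=\mathbb E_s[\delta Y(t_k,t_{k+1})]$, and this is what bounds $\mathcal E^Y$ and the diagonal part of $\mathcal E^Z$ by $\mathbb E|\delta Y(t_k,t_{k+1})|^2$.

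The $y$-argument of the generator is $Y^\pi(\tau(r),r)$, which switches row on every cell. Since $G_l^\pi$ vanishes on $[t_l,t_{l+1})$, one has $Y^\pi(t_l,r)-Y^\pi(t_l,t_{l+1})=-\int_r^{t_{l+1}}Z^\pi(t_l,u)\,dB(u)$. After weighting by $\Delta_k$ and summing over $l>k$, its contribution is at most $T\sum_l\Delta_l\mathbb E\int_{t_l}^{t_{l+1}}|Z^\pi(t_l,u)|^2\,du$. This is $T$ times the third term of the statement.

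Neither a single-row energy bound nor Theorem~\ref{theorem:energy-estimates} makes this $O(|\pi|)$; the latter only controls $s\ge t_{k+1}$. For instance, take $\Psi(t)=\int_t^T(u-t)^{-\alpha}\,dB(u)$ with $\alpha\in(0,\tfrac12)$ and $G(t,s,y,z)=y$. Then the contribution is of order $|\pi|^{1-2\alpha}$ on uniform grids. This term is shown to be $O(|\pi|)$ only later, under the Malliavin assumptions, in the proof of Theorem~\ref{thm:euler-convergence-bsvie}. Once you route the $Y$-freezing error into the diagonal-cell term and fix the absorption in Step~2, your outline becomes the paper's proof.
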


The following assumption provides the pointwise-in-time regularity on the data $(\Psi,G)$ needed to apply the discrete Hölder estimate in Theorem \ref{thm:holder-regularity-system-BSDEs} to the left-point approximation. This estimate will then be used to control the off-diagonal $Z^\pi$-oscillation terms in Theorem \ref{thm:euler-error-bsde-system}, and hence to obtain a convergence rate for the explicit Euler scheme.

\begin{assumption}\label{assumption:Holder-BSVIE-nodal}
Let $(\Psi,G)$ satisfy Assumption \ref{assumption-Holder-BSVIE} with $p=2$. We further impose the following pointwise-in-time assumptions.

We assume that $D_\theta\Psi$, $\mathcal G_\theta^1$ and $\mathcal G_\theta^z$ admit jointly measurable versions for which the quantities defining $\mathfrak p_2$, $\gamma_1$ and $\gamma_z$ in Assumption \ref{assumption-Holder-BSVIE} remain finite when each essential supremum appearing in its definition is replaced by an ordinary supremum. These versions are used in the left-point discretization.

\begin{enumerate}[label=(\roman*)]
    \item The free term admits a version satisfying
    $\sup_{t\in[0,T]}\mathbb E|\Psi(t)|^q<\infty$, such that
    $\Psi(t)\in\mathbb D^{2,2}$ for every $t\in[0,T]$, and for which
    \begin{flalign*}
        \mathfrak p_1^{\mathrm{l}}
        &\coloneqq
        \sup_{\theta<\theta'}
        \sup_{t\in[0,T]}
        \frac{
        \mathbb E
        |D_\theta\Psi(t)-D_{\theta'}\Psi(t)|^2
        }{
        |\theta-\theta'|
        +
        \mathbf 1_{\{t\in[\theta,\theta']\}}
        }
        <\infty,
        \qquad
        \mathfrak p_3^{\mathrm{l}}
        \coloneqq
        \sup_{\theta,\theta'\in[0,T]}
        \sup_{t\in[0,T]}
        \mathbb E
        |D_{\theta'}D_\theta\Psi(t)|^2
        <\infty.
    \end{flalign*}

    \item We assume that
    \begin{flalign*}
        \mathfrak g_0^{\mathrm{l}}
        \coloneqq
        \sup_{t\in[0,T]}
        \mathbb E\bigg[
        \bigg(
        \int_t^T
        |G(t,s,0,0)|^2\,ds
        \bigg)^{\frac q2}
        \bigg]
        <\infty.
    \end{flalign*}

   \item Let $\mathcal G_\theta^y$, $\mathcal K_{\theta,\theta'}$, and
$\mathcal G_{\theta,\theta'}^{2}$ be the random fields appearing in
Assumption \ref{assumption-Holder-BSVIE}. We assume that
\begin{flalign*}
    \gamma_y^{\mathrm{l}}
    &\coloneqq
    \sup_{\theta\in[0,T]}
    \mathbb E\bigg[
    \sup_{(t,s)\in\Delta(0,T)}
    |\mathcal G_\theta^y(t,s)|^q
    \bigg]^{1/q}
    <\infty, \quad \kappa^{\mathrm{l}}
    \coloneqq
    \sup_{\theta<\theta'}
    \sup_{t\in[0,T]}
    \frac{
    \displaystyle
    \mathbb E\bigg[
    \int_t^T
    |\mathcal K_{\theta,\theta'}(t,s)|^2\,ds
    \bigg]
    }{
    |\theta-\theta'|
    +
    \mathbf 1_{\{t\in[\theta,\theta']\}}
    }
    <\infty,
    \\
    \gamma_2^{\mathrm{l}}
    &\coloneqq
    \sup_{\theta,\theta'\in[0,T]}
    \sup_{t\in[0,T]}
    \mathbb E
    \bigg[
    \int_t^T
    |\mathcal G_{\theta,\theta'}^{2}(t,s)|^2\,ds
    \bigg]
    <\infty.
\end{flalign*}
\end{enumerate}
\end{assumption}

\begin{remark}
For Example \ref{example-1}, Assumptions \ref{assumption:continuity-nodal-data} and \ref{assumption:Holder-BSVIE-nodal} reduce to pointwise-in-time and time-continuity bounds for the kernels in the chaos expansion of $\Psi$, together with the required time-continuity of the deterministic generator.

For Example \ref{example-2}, the pointwise-in-time Malliavin bounds follow from standard moment estimates for the forward process and its Malliavin derivatives. The only additional requirement is the time-continuity condition in Assumption \ref{assumption:continuity-nodal-data}, which follows from suitable modulus of continuity of $\psi$ and $g$ in the time variables, together with the standard time regularity of $X$ and $\varphi$.
\end{remark}

Combining the convergence estimate for the left-point BSDE system with Theorem \ref{thm:euler-error-bsde-system}, we obtain a convergence rate for the discrete approximation of the original BSVIE.

\begin{theorem}\label{thm:euler-convergence-bsvie}
Let $(\Psi,G)$ satisfy Assumptions \ref{assumption:continuity-nodal-data} and \ref{assumption:Holder-BSVIE-nodal}, and assume that the partitions satisfy the mesh-ratio condition in Theorem \ref{thm:euler-error-bsde-system}. Then there exists a constant $C>0$, independent of $\pi$, such that, for $|\pi|$ sufficiently small,
\begin{flalign*}
\mathbb E\Bigg[
\int_0^T |Y(t)-\mathcal Y^\pi(\tau(t),t)|^2\,dt
+
\int_0^T\int_t^T
|Z(t,s)-\mathcal Z^\pi&(\tau(t),s)|^2\,ds\,dt
\Bigg]
\leq
C\big(
\rho(|\pi|)^2+|\pi|
\big).
\end{flalign*}
\end{theorem}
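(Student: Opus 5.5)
The plan is to split the total error by the triangle inequality into the discretization error of the left-point BSDE system and the Euler error:
\[
\mathbb E\int_0^T|Y(t)-\mathcal Y^\pi(\tau(t),t)|^2dt \leq 2\,\mathbb E\int_0^T|Y(t)-Y^\pi(\tau(t),t)|^2dt + 2\,\mathcal E^Y(\pi),
\]
and similarly for $Z$. The first parts are bounded by $C(\rho(|\pi|)^2+|\pi|)$ by Lemma \ref{lemma:convergence-left-point-data}. It then remains to show that the right-hand side of Theorem \ref{thm:euler-error-bsde-system} is $O(\rho(|\pi|)^2+|\pi|)$. That theorem leaves two terms to control: the diagonal-cell term $\sum_k\Delta_k\mathbb E\int_{t_k}^{t_{k+1}}|Z^\pi(t_k,s)|^2ds$ and the off-diagonal oscillation term.

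\textbf{Step 1: the left-point data satisfy the discrete assumption.} I would verify Assumption \ref{main assumption convergence rate BSVIEs, discrete setting} with $p=2$ for the data \eqref{eq:left-point-data}, uniformly along the sequence of partitions. The pointwise-in-time quantities in Assumption \ref{assumption:Holder-BSVIE-nodal} are designed for this.
\begin{itemize}
\item For $\mathfrak p_1^\Pi$: $\sum_k\Delta_k\mathbb E|D_\theta\Psi(t_k)-D_{\theta'}\Psi(t_k)|^2\leq \mathfrak p_1^{\mathrm l}\sum_k\Delta_k(|\theta-\theta'|+\mathbf 1_{\{t_k\in[\theta,\theta']\}})$. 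This is at most $C(|\theta-\theta'|+|\pi|)$, because the grid points in $[\theta,\theta']$ carry total mesh weight at most $|\theta-\theta'|+|\pi|$.
\item For $\kappa^\Pi$: the same argument applies with $\kappa^{\mathrm l}$.
\item For $\mathfrak p_3^\Pi$ and $\gamma_2^\Pi$: these follow directly from the corresponding suprema.
\item For $\mathfrak p_2^\Pi$, $\gamma_1^\Pi$, $\gamma_z^\Pi$: these come from the ordinary-supremum versions.
\item For $\gamma_y^\Pi$: this comes from $\gamma_y^{\mathrm l}$.
\item For $\sup_\pi\|G^{\pi,0}\|_{\mathcal H^q_\Delta}$: this follows from $\mathfrak g_0^{\mathrm l}$ and Jensen/Minkowski over $k$.
\end{itemize}
Theorem \ref{thm:holder-regularity-system-BSDEs} with $p=2$ then gives, for all $s,r$,
\[
\sum_{k:t_k\leq s\wedge r}\Delta_k\,\mathbb E|Z^\pi(t_k,s)-Z^\pi(t_k,r)|^2\leq K(|s-r|+|\pi|).
\]

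\textbf{Step 2: the oscillation term.} For $l\geq k+1$, $\langle Z^\pi\rangle_{k,l+1}$ is the $\mathcal F_{t_{l+1}}$-conditional average of $Z^\pi(t_k,\cdot)$ over the next cell, and conditional expectation is an $L^2$-contraction. So for $s\in[t_l,t_{l+1})$ I bound
\[
\mathbb E|Z^\pi(t_k,s)-\langle Z^\pi\rangle_{k,l+1}|^2\leq \frac{1}{\Delta_{l+1}}\int_{t_{l+1}}^{t_{l+2}}\mathbb E|Z^\pi(t_k,s)-Z^\pi(t_k,r)|^2dr.
\]
This uses Jensen, plus the fact that replacing $Z^\pi(t_k,s)$ by its conditional expectation given $\mathcal F_{t_{l+1}}$ only reduces the $L^2$ distance. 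Strictly, $Z^\pi(t_k,s)$ is $\mathcal F_s$-measurable with $s<t_{l+1}$, so it is already $\mathcal F_{t_{l+1}}$-measurable and no reduction is even needed.

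Next I sum over $k$ first for fixed $l,s,r$, using Step 1 with $s\wedge r=s\geq t_k$, and then integrate in $s$ and average in $r$. Since $|s-r|\leq\Delta_l+\Delta_{l+1}$, the mesh-ratio condition gives the bound $C\sum_l\Delta_l(\Delta_l+\Delta_{l+1}+|\pi|)\leq C|\pi|$. The point is that exchanging the order of summation, $\sum_k\Delta_k\sum_{l>k}=\sum_l\sum_{k<l}\Delta_k$, exactly produces the averaged quantity that Theorem \ref{thm:holder-regularity-system-BSDEs} controls.

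\textbf{Step 3: the diagonal term.} Here I would use the representation $Z^\pi(t_k,s)=D_sY^\pi(t_k,s)$, which on the diagonal cell solves the linear equation with zero driver term $\mathcal C$ and with coupling through $U^\pi$. Lemma \ref{lemma:bound_U_delta} and the bounds $\mathfrak p_2$ and $\gamma_1$ (in supremum form) give $\sup_{s}\mathbb E|Z^\pi(t_k,s)|^2\leq C$ uniformly in $k$ and $\pi$. Hence the diagonal term is at most $C\sum_k\Delta_k^2\leq C|\pi|$. Alternatively, Step 1's estimate would give a uniform $L^2$ bound for $Z^\pi(t_k,s)$ from the representation formula, as in the estimate for $I_2$ and $I_5$.

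\textbf{Main obstacle.} I expect Step 1 to be the main difficulty: the mesh-defect bookkeeping in $\mathfrak p_1^\Pi$ and $\kappa^\Pi$, and checking that the supremum versions give $\mathcal T^q_\Delta$, $\mathcal U^q_\Delta$ and $\mathcal S^q_\Delta$ bounds that hold uniformly when evaluated at grid points.
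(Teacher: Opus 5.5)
Your proposal is correct and follows the paper's proof. It uses the same ingredients: the triangle inequality with Lemma \ref{lemma:convergence-left-point-data} and Theorem \ref{thm:euler-error-bsde-system}, the check that the left-point data satisfy Assumption \ref{main assumption convergence rate BSVIEs, discrete setting} with $p=2$ so that Theorem \ref{thm:holder-regularity-system-BSDEs} controls the oscillation term, and the trace identity plus a uniform $L^2$ bound on $D_sY^\pi$ for the diagonal-cell term. The differences are only tactical: you average $|Z^\pi(t_k,s)-Z^\pi(t_k,r)|^2$ directly over the next cell instead of inserting the nodal value $Z^\pi(t_k,t_{l+1})$, which avoids needing a pointwise version and in fact does not use the mesh-ratio condition at that step; and you bound the diagonal term through the representation formula and Lemma \ref{lemma:bound_U_delta}, whereas the paper uses the standard $L^2$ BSDE estimate together with Proposition \ref{prop:malliavin-energy-estimates-discrete-BSVIE}.
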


\begin{proof}
The pointwise bounds in Assumption \ref{assumption:Holder-BSVIE-nodal} imply that the left-point data \eqref{eq:left-point-data} satisfy Assumption \ref{main assumption convergence rate BSVIEs, discrete setting}, with $p=2$, uniformly in $\pi$. Hence Theorem \ref{thm:holder-regularity-system-BSDEs} and Proposition \ref{prop:malliavin-energy-estimates-discrete-BSVIE} apply with constants independent of $\pi$.

We first control the diagonal-cell term in Theorem \ref{thm:euler-error-bsde-system}. By the trace identity \eqref{eq:malliavin-trace-bsde-system}, it is enough to show that
\[
    \mathbb E\int_0^T
    |D_sY^\pi(\tau(s),s)|^2\,ds
    \leq C.
\]
Applying the standard $L^2$ estimate to the Malliavin-differentiated BSDE \eqref{eq:DY-discrete-BSVIE}, with Malliavin parameter $s$, and then integrating in $s$, this follows from the uniform bounds on $D_\theta\Psi^\pi$, the envelope defining $\gamma_1^\Pi$, and Proposition \ref{prop:malliavin-energy-estimates-discrete-BSVIE}. Therefore, again by the trace identity,
\begin{flalign}
\sum_{k=0}^{N-1}
\Delta_k
\mathbb E\int_{t_k}^{t_{k+1}}
|Z^\pi(t_k,s)|^2\,ds
=
\sum_{k=0}^{N-1}
\Delta_k
\mathbb E\int_{t_k}^{t_{k+1}}
|D_sY^\pi(t_k,s)|^2\,ds
\leq
|\pi|
\mathbb E\int_0^T
|D_sY^\pi(\tau(s),s)|^2\,ds
\leq
C|\pi|.
\label{eq:diagonal-cell-Zpi-bound}
\end{flalign}

We next estimate the conditional-average oscillation term. Inserting $Z^\pi(t_k,t_{l+1})$ and using conditional Jensen's inequality gives
\begin{flalign*}
\sum_{k=0}^{N-2}
\Delta_k
\sum_{l=k+1}^{N-2}
\mathbb E\int_{t_l}^{t_{l+1}}
\big|
Z^\pi(t_k,s)-\scalar{Z^\pi}_{k,l+1}
\big|^2\,ds
&\leq
C\sum_{k=0}^{N-2}
\Delta_k
\sum_{l=k+1}^{N-2}
\mathbb E\int_{t_l}^{t_{l+1}}
|Z^\pi(t_k,s)-Z^\pi(t_k,t_{l+1})|^2\,ds
\\
&\qquad+
C\sum_{k=0}^{N-2}
\Delta_k
\sum_{l=k+1}^{N-2}
\frac{\Delta_l}{\Delta_{l+1}}
\mathbb E\int_{t_{l+1}}^{t_{l+2}}
|Z^\pi(t_k,t_{l+1})-Z^\pi(t_k,r)|^2\,dr.
\end{flalign*}
The mesh-ratio condition and Theorem \ref{thm:holder-regularity-system-BSDEs}, with $p=2$, then yield
\begin{flalign}
&\sum_{k=0}^{N-2}
\Delta_k
\sum_{l=k+1}^{N-2}
\mathbb E\int_{t_l}^{t_{l+1}}
\big|
Z^\pi(t_k,s)-\scalar{Z^\pi}_{k,l+1}
\big|^2\,ds
\notag\\
&\quad\leq
C\sum_{l=1}^{N-2}
\int_{t_l}^{t_{l+1}}
\big(
|s-t_{l+1}|+|\pi|
\big)\,ds
+
C\sum_{l=1}^{N-2}
\int_{t_{l+1}}^{t_{l+2}}
\big(
|r-t_{l+1}|+|\pi|
\big)\,dr
\notag\\
&\quad\leq
C|\pi|.
\label{eq:conditional-average-Zpi-bound}
\end{flalign}

Theorem \ref{thm:euler-error-bsde-system}, together with
\eqref{eq:diagonal-cell-Zpi-bound} and
\eqref{eq:conditional-average-Zpi-bound}, now gives
\begin{flalign*}
\mathcal E^Y(\pi)+\mathcal E^Z(\pi)
&\leq
C\big(
\rho(|\pi|)^2+|\pi|
\big).
\end{flalign*}
Combining this estimate with Lemma \ref{lemma:convergence-left-point-data} and the triangle inequality concludes the proof.
\end{proof}

\appendix

\section{Proofs of Subsection \ref{subsec:estimates-fundamental-matrix}}\label{section:appendix-proofs-fundamental-matrix}

\begin{proof}[Proof of Lemma \ref{lemma:bound_U_delta}]
By \eqref{eq:U-diagonal-expression} and the uniform boundedness of $\beta_k^\pi$, the standard moment estimates for stochastic exponentials, together with the conditional Doob inequality, yield
\begin{gather*}
    \mathbb{E}\Big[
    \sup_{t\leq u\leq T}
    |U^\pi(t,u)_{k,k}|^\lambda
    \,\big|\,\mathcal F_t
    \Big]^{\frac{1}{\lambda}}
    \leq C_\lambda.
\end{gather*}
For $k<j$, set
\begin{gather*}
    m_{k,j}
    \coloneqq
    \mathbb{E}\Big[
    \sup_{t\leq u\leq T}
    |U^\pi(t,u)_{k,j}|^\lambda
    \,\big|\,\mathcal F_t
    \Big]^{\frac{1}{\lambda}}.
\end{gather*}
The same diagonal estimate gives
\begin{gather*}
    \mathbb{E}\Big[
    \sup_{v\leq u\leq T}
    |\mathcal E_j^\pi(v,u)|^\lambda
    \,\big|\,\mathcal F_v
    \Big]^{\frac{1}{\lambda}}
    \leq C_\lambda.
\end{gather*}
Hence, using \eqref{eq:equation-U-offdg}, conditional Minkowski's inequality and the tower property, together with the fact that the factor multiplying $\mathcal E_j^\pi(v,\cdot)$ is $\mathcal F_v$-measurable, the diagonal estimate for $\mathcal E_j^\pi$, and the uniform boundedness of $\alpha_{l,j}^\pi$, we obtain
\begin{flalign*}
    m_{k,j}
    &\leq
    C_\lambda
    \int_t^T
    \mathbf 1_{[t_j,t_{j+1})}(v)
    \sum_{l=k}^{j-1}
    \mathbb E\Big[
    |U^\pi(t,v)_{k,l}|^\lambda
    \,\big|\,\mathcal F_t
    \Big]^{\frac1\lambda}\,dv
    \leq
    C_\lambda
    \Delta_j
    \sum_{l=k}^{j-1}m_{k,l}.
\end{flalign*}
Since $m_{k,k}\leq C_\lambda$, setting
$a_j\coloneqq \Delta_j^{-1}m_{k,j}$ for $j>k$, the preceding recurrence and the discrete Gronwall inequality yield
\begin{gather*}
    a_j
    \leq
    C_\lambda
    \bigg(
    1+
    \sum_{l=k+1}^{j-1}\Delta_l a_l
    \bigg)
    \leq
    C_\lambda
    \exp\bigg(
    C_\lambda
    \sum_{l=k+1}^{j-1}\Delta_l
    \bigg)
    \leq
    C_\lambda,
    \qquad
    m_{k,j}
    =
    \Delta_j a_j
    \leq
    C_\lambda\Delta_j,
    \quad k<j.
\end{gather*}
Together with the diagonal estimate, this proves \eqref{eq:U-entry-bound}.
\end{proof}

\begin{proof}[Proof of Lemma \ref{lemma:energy-Dalpha-Dbeta}]
By the Malliavin chain rule (see \cite[Proposition 1.2.3]{NualartDavidTMCa}), the boundedness of the second-order partial derivatives of $G^\pi$, and the envelope bounds in Assumption \ref{main assumption convergence rate BSVIEs, discrete setting}, we have, for $v\in[t_{k+1},T]$,
\begin{flalign*}
    |D_u\alpha_k^\pi(v)|
    &\leq
    C\Big(
    |D_uY^\pi(\tau(v),v)|
    +
    |D_uZ^\pi(t_k,v)|
    +
    |\mathcal G_u^{y,\pi}(t_k,v)|
    \Big),
    \\
    |D_u\beta_k^\pi(v)|
    &\leq
    C\Big(
    |D_uY^\pi(\tau(v),v)|
    +
    |D_uZ^\pi(t_k,v)|
    +
    |\mathcal G_u^{z,\pi}(t_k,v)|
    \Big).
\end{flalign*}
Moreover,
\begin{gather*}
    \sum_{k=0}^{N-1}
    \Delta_k
    \int_{t_{k+1}}^T
    |D_uY^\pi(\tau(v),v)|^2\,dv
    \leq
    T\int_0^T
    |D_uY^\pi(\tau(v),v)|^2\,dv.
\end{gather*}
Since the dominating fields are piecewise constant in the first time variable and vanish on the diagonal cells,
\begin{flalign*}
    &\mathbb E\bigg[
    \bigg(
    \sum_{k=0}^{N-1}
    \Delta_k
    \int_{t_{k+1}}^T
    |\mathcal G_u^{y,\pi}(t_k,v)|^2\,dv
    \bigg)^{\frac\lambda2}
    \bigg]^{\frac1\lambda}
    \leq
    C\norm{\mathcal G_u^{y,\pi}}_{\mathcal S_\Delta^\lambda},
    \qquad \mathbb E\bigg[
    \bigg(
    \sum_{k=0}^{N-1}
    \Delta_k
    \int_{t_{k+1}}^T
    |\mathcal G_u^{z,\pi}(t_k,v)|^2\,dv
    \bigg)^{\frac\lambda2}
    \bigg]^{\frac1\lambda}
    \leq
    C\norm{\mathcal G_u^{z,\pi}}_{\mathcal U_\Delta^\lambda}.
\end{flalign*}
Combining these estimates with Proposition
\ref{prop:malliavin-energy-estimates-discrete-BSVIE} and using $\norm{\mathcal G_u^{1,\pi}}_{\mathcal H_\Delta^\lambda}
    \leq
    C\norm{\mathcal G_u^{1,\pi}}_{\mathcal T_\Delta^\lambda}$, proves the claim.
\end{proof}

\begin{proof}[Proof of Lemma \ref{lemma:diagonal-DU-bound}]
Applying the Malliavin chain rule in \cite[Proposition 3.5]{anton2024} to \eqref{eq:U-diagonal-expression}, we obtain
\begin{flalign*}
    D_u U^\pi(t,v)_{k,k}
    =
    U^\pi(t,v)_{k,k}
    \bigg(
    \int_t^v D_u\beta_k^\pi(w)\,dB(w)
    -
    \int_t^v \beta_k^\pi(w)D_u\beta_k^\pi(w)\,dw
    \bigg).
\end{flalign*}
Using conditional Hölder's inequality with conjugate exponents
$\frac{2}{2-\lambda}$ and $\frac{2}{\lambda}$, Lemma
\ref{lemma:bound_U_delta}, the conditional BDG inequality, the boundedness
of $\beta^\pi$, Cauchy--Schwarz, and the fact that $D_u\beta_k^\pi$ vanishes on
$[t_k,t_{k+1})$, we obtain
\begin{flalign*}
   \mathbb{E}_{u}\Big[
   \sup_{v \in [t,T]}
   |D_u U^\pi(t,v)_{k,k}|^{\lambda}
   \Big]^{\frac{2}{\lambda}}
   \leq
   C
   \mathbb{E}_{u}\Big[
   \int_{t_{k+1}}^T |D_u\beta_k^\pi(v)|^2\,dv
   \Big].
\end{flalign*}
Therefore, summing the previous estimate over $k$ and using Jensen's inequality for conditional expectations, we obtain
\begin{flalign*}
&\mathbb{E}\bigg[
\bigg(
\sum_{k=0}^{N-1}
\Delta_k
\mathbb{E}_{u}\Big[
\sup_{v \in [t,T]}
\big|D_u U^\pi(t,v)_{k,k}\big|^{\lambda}
\Big]^{\frac{2}{\lambda}}
\bigg)^{\frac{\eta}{2}}
\bigg]^{\frac{1}{\eta}}
\leq
C
\mathbb{E}\bigg[
\bigg(
\sum_{k=0}^{N-1}
\Delta_k
\int_{t_{k+1}}^T
|D_u\beta_k^\pi(v)|^2\,dv
\bigg)^{\frac{\eta}{2}}
\bigg]^{\frac{1}{\eta}}.
\end{flalign*}
Applying the estimate for $D_u\beta^\pi$ obtained in the proof of Lemma \ref{lemma:energy-Dalpha-Dbeta} with exponent $\eta$, and using $\eta<q$, proves the claim.
\end{proof}

\begin{proof}[Proof of Lemma \ref{lemma:bound_D_theta_off_diagonal_rho1}]
For $v\in[t,T]$, define
\begin{gather*}
    V_{k,j}(v)
    \coloneqq
    \mathbf{1}_{\{k<j\}}
    \mathbb E_u\bigg[
    \sup_{r\in[t,v]}
    |D_uU^\pi(t,r)_{k,j}|^\lambda
    \bigg]^{\frac{2}{\lambda}},
\end{gather*}
and, for $V\in\mathbb R^{N\times N}$, $\|V\|\coloneqq
\sum_{k=0}^{N-1}\Delta_k\sum_{j=k+1}^{N-1}\frac{1}{\Delta_j}|V_{k,j}|$. We also set
\begin{gather*}
    \mathcal V(v)
    \coloneqq
    \mathbb E\bigg[
    \|V(v)\|^{\frac{\eta}{2}}
    \bigg]^{\frac{2}{\eta}}.
\end{gather*}
Since the square of the left-hand side of
\eqref{ineq:bound_D_theta_off_diagonal_rho1} is $\mathcal V(T)$, it is enough to prove
\begin{gather*}
    \mathcal V(T)
    \leq
    C
    \big(
    \mathfrak p_2^\Pi
    +
    \gamma_1^\Pi
    +
    \gamma_y^\Pi
    +
    \gamma_z^\Pi
    \big)^2.
\end{gather*}
Applying the Malliavin product rule to \eqref{eq:equation-U-offdg}, and then the triangle inequality, we obtain, for $k<j$,
\begin{flalign*}
   V_{k,j}(v)
    &\leq C \Bigg\{
    \mathbb{E}_{u}\bigg[
    \sup_{w \in [t, v]}
    \bigg|
    \int_{[t,w]\cap[t_j,t_{j+1})}
    D_u\mathcal E_j^\pi(r,w)
    \sum_{i=k}^{j-1}
    \alpha_{i,j}^\pi(r)U^\pi(t,r)_{k,i}\,dr
    \bigg|^{\lambda}
    \bigg]^{\frac{2}{\lambda}}
    \\
    &\quad+
    \mathbb{E}_{u}\bigg[
    \sup_{w \in [t, v]}
    \bigg|
    \int_{[t,w]\cap[t_j,t_{j+1})}
    \mathcal E_j^\pi(r,w)
    \sum_{i=k}^{j-1}
    D_u\alpha_{i,j}^\pi(r)U^\pi(t,r)_{k,i}\,dr
    \bigg|^{\lambda}
    \bigg]^{\frac{2}{\lambda}}
    \\
    &\quad+
    \mathbb{E}_{u}\bigg[
    \sup_{w \in [t, v]}
    \bigg|
    \int_{[t,w]\cap[t_j,t_{j+1})}
    \mathcal E_j^\pi(r,w)
    \sum_{i=k}^{j-1}
    \alpha_{i,j}^\pi(r)D_uU^\pi(t,r)_{k,i}\,dr
    \bigg|^{\lambda}
    \bigg]^{\frac{2}{\lambda}}
    \Bigg\}
    \\
    &\eqqcolon
    C\Big\{
    (V_1)_{k,j}(v)
    +
    (V_2)_{k,j}(v)
    +
    (V_3)_{k,j}(v)
    \Big\}.
\end{flalign*}
Consequently,
\begin{flalign*}
    \mathcal V(v)
    \leq C \Big\{
    \mathbb{E}\bigg[
    \|V_{1}(v)\|^{\frac{\eta}{2}}
    \bigg]^{\frac{2}{\eta}}
    +
    \mathbb{E}\bigg[
    \|V_{2}(v)\|^{\frac{\eta}{2}}
    \bigg]^{\frac{2}{\eta}}
    +
    \mathbb{E}\bigg[
    \|V_{3}(v)\|^{\frac{\eta}{2}}
    \bigg]^{\frac{2}{\eta}}
    \Big\}.
\end{flalign*}

For $V_1$, conditional Minkowski and Hölder inequalities, the support and boundedness of $\alpha^\pi$, Lemma
\ref{lemma:bound_U_delta}, the diagonal estimate for
$D_u\mathcal E_j^\pi$, and conditional Jensen's inequality yield
\begin{flalign*}
    \mathbb E\bigg[
    \|V_1(v)\|^{\frac{\eta}{2}}
    \bigg]^{\frac{2}{\eta}}
    &\leq
    C
    \mathbb E\bigg[
    \bigg(
    \sum_{j=0}^{N-1}
    \Delta_j
    \int_{t_{j+1}}^T
    |D_u\beta_j^\pi(r)|^2\,dr
    \bigg)^{\frac{\eta}{2}}
    \bigg]^{\frac{2}{\eta}}.
\end{flalign*}

For $V_2$, separating the contribution $i=k$ from those with $i>k$ and using the entrywise estimates in Lemma
\ref{lemma:bound_U_delta}, the same arguments give
\begin{flalign*}
    \mathbb E\bigg[
    \|V_2(v)\|^{\frac{\eta}{2}}
    \bigg]^{\frac{2}{\eta}}
    &\leq
    C
    \mathbb E\bigg[
    \bigg(
    \sum_{i=0}^{N-1}
    \Delta_i
    \int_{t_{i+1}}^T
    |D_u\alpha_i^\pi(r)|^2\,dr
    \bigg)^{\frac{\eta}{2}}
    \bigg]^{\frac{2}{\eta}}.
\end{flalign*}

For $V_3$, the contribution corresponding to $i=k$ is controlled by
Lemma \ref{lemma:diagonal-DU-bound}. For the remaining terms, conditional
Minkowski's inequality, the support and boundedness of $\alpha^\pi$,
Cauchy--Schwarz in time, and weighted Cauchy--Schwarz in the index $i$
give
\begin{flalign*}
    (V_3^{\mathrm{off-dg}})_{k,j}(v)
    &\leq
    C\Delta_j
    \sum_{i=k+1}^{j-1}
    \frac{1}{\Delta_i}
    \int_t^v
    \mathbf{1}_{[t_j,t_{j+1})}(r)
    V_{k,i}(r)\,dr.
\end{flalign*}
Summing over $k$ and $j$ and applying Minkowski's inequality in the time
variable, we obtain
\begin{flalign*}
    \mathbb E\bigg[
    \|V_3(v)\|^{\frac{\eta}{2}}
    \bigg]^{\frac{2}{\eta}}
    &\leq
    C
    \big(
    \mathfrak p_2^\Pi
    +
    \gamma_1^\Pi
    +
    \gamma_z^\Pi
    \big)^2
    +
    C\int_t^v\mathcal V(r)\,dr.
\end{flalign*}
Combining the previous estimates with Lemma
\ref{lemma:energy-Dalpha-Dbeta}, applied with exponent $\eta$, and using
$\eta<q$, gives
\begin{gather*}
    \mathcal V(v)
    \leq
    C\bigg\{
    \big(
    \mathfrak p_2^\Pi
    +
    \gamma_1^\Pi
    +
    \gamma_y^\Pi
    +
    \gamma_z^\Pi
    \big)^2
    +
    \int_t^v\mathcal V(r)\,dr
    \bigg\}.
\end{gather*}
Gronwall's inequality therefore yields
\begin{gather*}
    \mathcal V(T)
    \leq
    C
    \big(
    \mathfrak p_2^\Pi
    +
    \gamma_1^\Pi
    +
    \gamma_y^\Pi
    +
    \gamma_z^\Pi
    \big)^2.
\end{gather*}
Taking square roots proves
\eqref{ineq:bound_D_theta_off_diagonal_rho1}.
\end{proof}

\begin{proof}[Proof of Lemma \ref{lemma:time-regularity-U}]
We first prove \eqref{eq:time-regularity-diagonal-U}. By the flow property,
\begin{gather*}
    U^\pi(r,v)_{k,k}
    =
    U^\pi(r,s)_{k,k}U^\pi(s,v)_{k,k}.
\end{gather*}
Moreover, the conditional BDG inequality, the boundedness of $\beta^\pi$, and Lemma \ref{lemma:bound_U_delta} give
\begin{flalign}
    \mathbb{E}_r\Big[
    \big|1-U^\pi(r,s)_{k,k}\big|^4
    \Big]
    &\leq
    C\mathbb{E}_r\bigg[
    \bigg(
    \int_r^s
    |U^\pi(r,a)_{k,k}|^2
    |\beta_k^\pi(a)|^2\,da
    \bigg)^2
    \bigg]
    \notag
    \\
    &\leq
    C|s-r|^2.
    \label{eq:diag-ineq-U}
\end{flalign}
Therefore, by conditional Cauchy--Schwarz and Lemma \ref{lemma:bound_U_delta},
\begin{flalign*}
    \mathbb{E}_r\Big[
    |U^\pi(s,v)_{k,k}-U^\pi(r,v)_{k,k}|^2
    \Big]
    &\leq
    \mathbb{E}_r\Big[
    |U^\pi(s,v)_{k,k}|^4
    \Big]^{\frac12}
    \mathbb{E}_r\Big[
    |1-U^\pi(r,s)_{k,k}|^4
    \Big]^{\frac12}
    \\
    &\leq
    C|s-r|,
\end{flalign*}
which proves \eqref{eq:time-regularity-diagonal-U}.

We next note that, for $k<l$, the variation-of-constants formula, Hölder's inequality in time, and Lemma \ref{lemma:bound_U_delta} yield
\begin{flalign}
    \mathbb{E}_r\Big[
    |U^\pi(r,s)_{k,l}|^4
    \Big]^{\frac12}
    &\leq
    C\delta_l(r,s)^2.
    \label{eq:useful-ineq-off-dg-U-time-reg}
\end{flalign}
Indeed, the integral defining $U^\pi(r,s)_{k,l}$ is supported on
$[r,s]\cap[t_l,t_{l+1})$, whose length is $\delta_l(r,s)$, while the integrand has uniformly bounded conditional fourth moments.

We now prove \eqref{eq:time-regularity-off-diagonal-U}. For $k<j$, the flow property gives
\begin{flalign*}
    U^\pi(s,v)_{k,j}-U^\pi(r,v)_{k,j}
    &=
    \big(1-U^\pi(r,s)_{k,k}\big)
    U^\pi(s,v)_{k,j}
    -
    U^\pi(r,s)_{k,j}
    U^\pi(s,v)_{j,j}
    -
    \sum_{l=k+1}^{j-1}
    U^\pi(r,s)_{k,l}
    U^\pi(s,v)_{l,j}.
\end{flalign*}
Using conditional Cauchy--Schwarz, \eqref{eq:diag-ineq-U},
\eqref{eq:useful-ineq-off-dg-U-time-reg}, and Lemma
\ref{lemma:bound_U_delta}, the first two terms are bounded by $C\Delta_j^2|s-r|$ and $C\delta_j(r,s)^2$, respectively. For the remaining sum, weighted Cauchy--Schwarz gives
\begin{flalign*}
    \mathbb{E}_r\bigg[
    \bigg|
    \sum_{l=k+1}^{j-1}
    U^\pi(r,s)_{k,l}
    U^\pi(s,v)_{l,j}
    \bigg|^2
    \bigg]
    \leq
    C\Delta_j^2
    \sum_{l=k+1}^{j-1}
    \frac{\delta_l(r,s)^2}{\Delta_l}
    \leq
    C\Delta_j^2
    \sum_{l=k+1}^{j-1}
    \delta_l(r,s)
    \leq
    C\Delta_j^2|s-r|.
\end{flalign*}
Combining the three bounds proves
\eqref{eq:time-regularity-off-diagonal-U}.
\end{proof}

\section{Proofs of the Examples}\label{section:proof-example}

\subsection*{Example 1}

Since $G$ is deterministic, all its Malliavin derivatives vanish. Hence the corresponding conditions in Assumption \ref{assumption-Holder-BSVIE} hold with zero envelopes. The assumptions on $G$ also give the required regularity in $(y,z)$ and integrability of $G^0$.

We now verify the conditions on the terminal datum. Since $\Psi$ belongs to a finite sum of Wiener chaoses, hypercontractivity and the Wiener isometry give
\begin{flalign*}
    \|\Psi\|_{2,p}^p
    =
    \mathbb E\bigg[
    \bigg(
    \int_0^T|\Psi(t)|^2\,dt
    \bigg)^{\frac p2}
    \bigg]
    \leq
    C
    \bigg(
    \sum_{k=0}^M
    \int_0^T
    \|g_k(\cdot,t)\|_{L^2([0,T]^k)}^2\,dt
    \bigg)^{\frac p2}
    <\infty.
\end{flalign*}
Moreover,
\begin{flalign*}
    D_\theta\Psi(t)
    &=
    \sum_{k=1}^M
    k
    \int_{[0,T]^{k-1}}
    g_k(s_1,\dots,s_{k-1},\theta,t)
    \,dB(s_1)\cdots dB(s_{k-1}),
    \\
    D_{\theta'}D_\theta\Psi(t)
    &=
    \sum_{k=2}^M
    k(k-1)
    \int_{[0,T]^{k-2}}
    g_k(s_1,\dots,s_{k-2},\theta,\theta',t)
    \,dB(s_1)\cdots dB(s_{k-2}).
\end{flalign*}
Thus, by hypercontractivity and \eqref{eq:example-1-p1-kernel},
\begin{flalign*}
    \mathbb E\bigg[
    \bigg(
    \int_0^T
    |D_\theta\Psi(t)-D_{\theta'}\Psi(t)|^2\,dt
    \bigg)^{\frac p2}
    \bigg]
    \leq
    C
    \bigg(
    \sum_{k=1}^M
    \int_0^T
    \big\|
    g_k(\cdot,\theta,t)-g_k(\cdot,\theta',t)
    \big\|_{L^2([0,T]^{k-1})}^2\,dt
    \bigg)^{\frac p2}
    \leq
    C|\theta-\theta'|^{\frac p2}.
\end{flalign*}
Similarly, \eqref{eq:example-1-p3-kernel} gives
\begin{flalign*}
    \sup_{\theta,\theta'\in[0,T]}
    \mathbb E\bigg[
    \bigg(
    \int_0^T
    |D_{\theta'}D_\theta\Psi(t)|^2\,dt
    \bigg)^{\frac p2}
    \bigg]
    \leq
    C
    \sup_{\theta,\theta'\in[0,T]}
    \bigg(
    \sum_{k=2}^M
    \int_0^T
    \big\|
    g_k(\cdot,\theta,\theta',t)
    \big\|_{L^2([0,T]^{k-2})}^2\,dt
    \bigg)^{\frac p2}
    <\infty.
\end{flalign*}
Therefore, $\mathfrak p_1,\mathfrak p_3<\infty$.

It remains to verify that $\mathfrak p_2<\infty$. For each fixed $t$, hypercontractivity and \eqref{eq:example-1-eq-1} yield
\begin{flalign*}
    \mathbb E|D_\theta\Psi(t)|^q
    &\leq
    C
    \bigg(
    \sum_{k=1}^M
    \big\|
    g_k(\cdot,\theta,t)
    \big\|_{L^2([0,T]^{k-1})}^2
    \bigg)^{\frac q2},
\end{flalign*}
and hence
\begin{gather*}
    \sup_{\theta,t\in[0,T]}
    \mathbb E|D_\theta\Psi(t)|^q
    <\infty.
\end{gather*}
Likewise, by \eqref{eq:example-1-eq-2},
\begin{flalign*}
    \sup_{\theta\in[0,T]}
    \mathbb E\big|
    D_\theta\Psi(t)-D_\theta\Psi(r)
    \big|^q
    &\leq
    C
    \bigg(
    \sum_{k=1}^M
    \big\|
    g_k(\cdot,\theta,t)-g_k(\cdot,\theta,r)
    \big\|_{L^2([0,T]^{k-1})}^2
    \bigg)^{\frac q2}
    \leq
    C|t-r|^{\eta q}.
\end{flalign*}
Since $\eta q>1$, the strengthened Kolmogorov criterion, see
\cite[Theorem 2.1]{revuz}, implies that, for every
$\gamma<\eta-\frac1q$,
\begin{gather*}
    \sup_{\theta\in[0,T]}
    \mathbb E\bigg[
    \sup_{t\neq r}
    \frac{
    |D_\theta\Psi(t)-D_\theta\Psi(r)|^q
    }{
    |t-r|^{\gamma q}
    }
    \bigg]
    <\infty.
\end{gather*}
Consequently,
\begin{flalign*}
    \mathfrak p_2^q
    &=
    \sup_{\theta\in[0,T]}
    \mathbb E\bigg[
    \sup_{t\in[0,T]}
    |D_\theta\Psi(t)|^q
    \bigg]
    \leq
    C
    \sup_{\theta\in[0,T]}
    \mathbb E|D_\theta\Psi(0)|^q
    +
    CT^{\gamma q}
    \sup_{\theta\in[0,T]}
    \mathbb E\bigg[
    \sup_{t\neq r}
    \frac{
    |D_\theta\Psi(t)-D_\theta\Psi(r)|^q
    }{
    |t-r|^{\gamma q}
    }
    \bigg]
    <\infty.
\end{flalign*}
Hence all the conditions of Assumption \ref{assumption-Holder-BSVIE} are satisfied.

\subsection*{Example 2}

Under the assumptions on $b$ and $\sigma$, we have
$X(t)\in\mathbb D^{2,m}$ for every $m\geq2$ and $t\in[0,T]$. Moreover,
for every $\theta,\theta'\in[0,T]$,
\begin{gather}\label{eq:moment-estimates-X}
    \mathbb E\bigg[
    \sup_{t\in[0,T]}|X(t)|^m
    \bigg]
    +
    \mathbb E\bigg[
    \sup_{t\in[\theta,T]}
    |D_\theta X(t)|^m
    \bigg]
    +
    \mathbb E\bigg[
    \sup_{t\in[\theta\vee\theta',T]}
    |D_{\theta'}D_\theta X(t)|^m
    \bigg]
    \leq
    C_m.
\end{gather}
We also have
\begin{gather*}
    \mathbb E
    |D_\theta X(t)-D_\theta X(s)|^m
    \leq
    C_m|t-s|^{\frac m2},
    \qquad
    s,t\in[\theta,T],
\end{gather*}
and
\begin{gather}\label{eq:theta-regularity-DX}
    \int_{\theta\vee\theta'}^T
    \mathbb E
    \big|
    D_\theta X(t)-D_{\theta'}X(t)
    \big|^m\,dt
    \leq
    C_m|\theta-\theta'|^{\frac m2}.
\end{gather}
In what follows, the moment exponent $m$ is chosen sufficiently large,
depending on $p$, $q$, and the polynomial-growth exponents.

We first verify the conditions on the terminal datum
\[
    \Psi(t)
    =
    \psi\big(t,\varphi(t),\varphi(T)\big).
\]
The polynomial growth of $\psi$ and $h$, together with
\eqref{eq:moment-estimates-X}, gives
$\|\Psi\|_{2,p}<\infty$.

By the Malliavin chain rule for functions with derivatives of polynomial
growth, see \cite[Proposition 5.4]{Nualart2019},
\begin{gather*}
    D_\theta\varphi(t)
    =
    \mathbf 1_{\{\theta\leq t\}}
    \int_\theta^t
    \partial_xh(r,X(r))D_\theta X(r)\,dr.
\end{gather*}
We claim that, for every $m\geq2$,
\begin{flalign}\label{eq:p1-varphi-sketch}
    &\mathbb E\bigg[
    \bigg(
    \int_0^T
    |D_\theta\varphi(t)-D_{\theta'}\varphi(t)|^2\,dt
    \bigg)^{\frac m2}
    \bigg]
    +
    \mathbb E\big[
    |D_\theta\varphi(T)-D_{\theta'}\varphi(T)|^m
    \big]
    \leq
    C_m|\theta-\theta'|^{\frac m2}.
\end{flalign}
Indeed, assuming $\theta<\theta'$, for $t\geq\theta'$,
\begin{flalign*}
    D_\theta\varphi(t)-D_{\theta'}\varphi(t)
    &=
    \int_\theta^{\theta'}
    \partial_xh(r,X(r))D_\theta X(r)\,dr
    +
    \int_{\theta'}^t
    \partial_xh(r,X(r))
    \big(
    D_\theta X(r)-D_{\theta'}X(r)
    \big)\,dr,
\end{flalign*}
whereas $D_{\theta'}\varphi(t)=0$ for $t\in[\theta,\theta')$.
Estimate \eqref{eq:p1-varphi-sketch} follows from Hölder's inequality,
the polynomial growth of $\partial_xh$, \eqref{eq:moment-estimates-X},
and \eqref{eq:theta-regularity-DX}.

Furthermore,
\begin{flalign}\label{eq:example-m-chain-rule-Psi}
    D_\theta\Psi(t)
    =
    \partial_1\psi\big(t,\varphi(t),\varphi(T)\big)
    D_\theta\varphi(t)
    +
    \partial_2\psi\big(t,\varphi(t),\varphi(T)\big)
    D_\theta\varphi(T).
\end{flalign}
Consequently, choosing $m>p$ sufficiently large and using the polynomial
growth of the derivatives of $\psi$, Hölder's inequality, and
\eqref{eq:p1-varphi-sketch}, we obtain
\begin{gather*}
    \mathbb E\bigg[
    \bigg(
    \int_0^T
    |D_\theta\Psi(t)-D_{\theta'}\Psi(t)|^2\,dt
    \bigg)^{\frac p2}
    \bigg]
    \leq
    C|\theta-\theta'|^{\frac p2}.
\end{gather*}
Thus $\mathfrak p_1<\infty$.

The second-order chain rule gives
\begin{flalign*}
    D_{\theta'}D_\theta\varphi(t)
    =
    \mathbf 1_{\{\theta\vee\theta'\leq t\}}
    \int_{\theta\vee\theta'}^t
    \bigg[
    \partial_{xx}h(r,X(r))
    D_\theta X(r)D_{\theta'}X(r)
    +
    \partial_xh(r,X(r))
    D_{\theta'}D_\theta X(r)
    \bigg]\,dr.
\end{flalign*}
Hence the polynomial growth of the derivatives of $h$ and $\psi$,
together with \eqref{eq:moment-estimates-X}, implies
\begin{gather*}
    \sup_{\theta,\theta'\in[0,T]}
    \mathbb E\bigg[
    \bigg(
    \int_0^T
    |D_{\theta'}D_\theta\Psi(t)|^2\,dt
    \bigg)^{\frac p2}
    \bigg]
    <\infty.
\end{gather*}
Therefore $\mathfrak p_3<\infty$.

To verify $\mathfrak p_2<\infty$, observe that, for every $m\geq2$,
\begin{gather}\label{eq:moment-varphi-Dvarphi}
    \mathbb E\bigg[
    \sup_{t\in[0,T]}|\varphi(t)|^m
    \bigg]
    +
    \sup_{\theta\in[0,T]}
    \mathbb E\bigg[
    \sup_{t\in[0,T]}|D_\theta\varphi(t)|^m
    \bigg]
    <\infty.
\end{gather}
If $\ell$ denotes a sufficiently large polynomial-growth exponent for
the derivatives of $\psi$, then \eqref{eq:example-m-chain-rule-Psi}
gives
\begin{gather*}
    \sup_{t\in[0,T]}|D_\theta\Psi(t)|
    \leq
    C
    \bigg(
    1+\sup_{t\in[0,T]}|\varphi(t)|
    \bigg)^\ell
    \sup_{t\in[0,T]}|D_\theta\varphi(t)|.
\end{gather*}
Therefore, Hölder's inequality and
\eqref{eq:moment-varphi-Dvarphi} yield
\begin{gather*}
    \mathfrak p_2^q
    =
    \sup_{\theta\in[0,T]}
    \mathbb E\bigg[
    \sup_{t\in[0,T]}
    |D_\theta\Psi(t)|^q
    \bigg]
    <\infty.
\end{gather*}
Thus all the conditions on the terminal datum are satisfied.

We next consider
\[
    G(t,s,y,z)
    \coloneqq
    g(t,s,X(t),X(s),y,z).
\]
The assumptions on $g$ give the required regularity with respect to
$(y,z)$. Moreover,
\begin{gather*}
    |G(t,s,0,0)|
    \leq
    |g(t,s,0,0,0,0)|
    +
    C\big(
    |X(t)|+|X(s)|
    \big),
\end{gather*}
so that \eqref{eq:moment-estimates-X} and the integrability assumption on
$g(t,s,0,0,0,0)$ imply $\norm{G^0}_{\mathcal H_\Delta^q}<\infty$.

Extending the Malliavin derivatives of $X$ by zero before their respective
Malliavin times, the chain rule gives
\begin{flalign*}
    D_\theta G(t,s,y,z)
    &=
    \partial_{x_1}g(t,s,X(t),X(s),y,z)D_\theta X(t)
    +
    \partial_{x_2}g(t,s,X(t),X(s),y,z)D_\theta X(s).
\end{flalign*}
The same identity, with the corresponding mixed partial derivatives of
$g$, holds for $D_\theta\partial_yG$ and
$D_\theta\partial_zG$. Hence we may take
\begin{gather*}
    \mathcal G_\theta^1(t,s)
    =
    \mathcal G_\theta^y(t,s)
    =
    \mathcal G_\theta^z(t,s)
    =
    C\big(
    |D_\theta X(t)|+|D_\theta X(s)|
    \big).
\end{gather*}
The definitions of $\mathcal T_\Delta^q$, $\mathcal S_\Delta^q$, and
$\mathcal U_\Delta^q$, together with
\eqref{eq:moment-estimates-X}, then give
\begin{gather*}
    \gamma_1,\gamma_y,\gamma_z<\infty.
\end{gather*}
The required Malliavin differentiability and continuity properties follow
from the chain rule, the continuity of the derivatives of $g$, and the
moment estimates for $X$, $DX$, and $D^2X$.

It remains to verify the regularity in the Malliavin parameter. Assume
$\theta<\theta'$ and set
\begin{gather*}
    H_{\theta,\theta'}(u)
    \coloneqq
    |D_\theta X(u)-D_{\theta'}X(u)|,
\end{gather*}
where the derivatives are again extended by zero before their Malliavin times. The boundedness of $\partial_{x_1}g$ and $\partial_{x_2}g$ gives
\begin{gather*}
    |D_\theta G(t,s,y,z)-D_{\theta'}G(t,s,y,z)|
    \leq
    \mathcal K_{\theta,\theta'}(t,s), \quad \text{where} \quad \mathcal K_{\theta,\theta'}(t,s)
    \coloneqq
    C\big(
    H_{\theta,\theta'}(t)
    +
    H_{\theta,\theta'}(s)
    \big).
\end{gather*}
Moreover,
\begin{gather*}
    \int_0^T\int_t^T
    |\mathcal K_{\theta,\theta'}(t,s)|^2\,ds\,dt
    \leq
    C\int_0^T
    |H_{\theta,\theta'}(u)|^2\,du.
\end{gather*}
Since
\begin{flalign*}
    H_{\theta,\theta'}(u)
    &=
    \mathbf 1_{[\theta,\theta')}(u)|D_\theta X(u)|
    +
    \mathbf 1_{[\theta',T]}(u)
    |D_\theta X(u)-D_{\theta'}X(u)|,
\end{flalign*}
Hölder's inequality, \eqref{eq:moment-estimates-X}, and
\eqref{eq:theta-regularity-DX} with $m=p$ yield
\begin{flalign*}
    \mathbb E\bigg[
    \bigg(
    \int_0^T
    |H_{\theta,\theta'}(u)|^2\,du
    \bigg)^{\frac p2}
    \bigg]
    \leq
    C|\theta-\theta'|^{\frac p2}
    +
    C
    \int_{\theta'}^T
    \mathbb E\big[
    |D_\theta X(u)-D_{\theta'}X(u)|^p
    \big]\,du
\leq
    C|\theta-\theta'|^{\frac p2}.
\end{flalign*}
Consequently, $\kappa<\infty$.

Finally, the second-order chain rule gives
\begin{flalign*}
    |D_{\theta'}D_\theta G(t,s,y,z)|
    &\leq
    C\bigg(
    |D_{\theta'}D_\theta X(t)|
    +
    |D_{\theta'}D_\theta X(s)|
    +
    \big(
    |D_\theta X(t)|+|D_\theta X(s)|
    \big)
    \big(
    |D_{\theta'}X(t)|+|D_{\theta'}X(s)|
    \big)
    \bigg).
\end{flalign*}
We may therefore take the right-hand side as
$\mathcal G_{\theta,\theta'}^2(t,s)$. By
\eqref{eq:moment-estimates-X}, Hölder's inequality, and the fact that the
moments of $X$, $DX$, and $D^2X$ are finite at every order,
\begin{gather*}
    \gamma_2
    =
    \sup_{\theta,\theta'\in[0,T]}
    \norm{\mathcal G_{\theta,\theta'}^2}_{\mathcal H_\Delta^p}
    <\infty.
\end{gather*}
This completes the verification of Assumption
\ref{assumption-Holder-BSVIE}.

\section{Proof of Theorem \ref{thm:euler-error-bsde-system}}
\label{section:proof-euler}

For ease of notation, set
\begin{flalign*}
    \delta^\pi Y(t_k,s)
    &\coloneqq
    Y^\pi(t_k,s)-\mathcal Y^\pi(t_k,s),
    \qquad
    \delta^\pi Z(t_k,s)
    \coloneqq
    Z^\pi(t_k,s)-\mathcal Z^\pi(t_k,s),
\end{flalign*}
and
\begin{gather*}
    y_{k,l}
    \coloneqq
    \mathbb E
    \big|
    \delta^\pi Y(t_k,t_l)
    \big|^2,
    \qquad
    z_{k,l}
    \coloneqq
    \mathbb E
    \int_{t_l}^{t_{l+1}}
    \big|
    \delta^\pi Z(t_k,s)
    \big|^2\,ds,
    \qquad
    I_{k,l}
    \coloneqq
    y_{k,l}+z_{k,l}.
\end{gather*}
We also write
\begin{flalign*}
    \mathcal O^Z(\pi)
    \coloneqq
    \rho(|\pi|)^2+|\pi|
    +
    \sum_{k=0}^{N-1}
    \Delta_k
    \mathbb E
    \int_{t_k}^{t_{k+1}}
    |Z^\pi(t_k,s)|^2\,ds
    +
    \sum_{k=0}^{N-2}
    \Delta_k
    \sum_{l=k+1}^{N-2}
    \mathbb E
    \int_{t_l}^{t_{l+1}}
    \big|
    Z^\pi(t_k,s)-\scalar{Z^\pi}_{k,l+1}
    \big|^2\,ds.
\end{flalign*}
If $\tau^\star(t)$ denotes the right endpoint of the cell containing $t$,
then
\begin{flalign}
    \mathcal E^Z(\pi)
    &=
    \mathbb E
    \int_0^T
    \int_t^{\tau^\star(t)}
    |\delta^\pi Z(\tau(t),s)|^2\,ds\,dt
    +
    \sum_{k=0}^{N-2}
    \Delta_k
    \sum_{l=k+1}^{N-1}
    z_{k,l}.
    \label{eq:EZ-diagonal-offdiagonal-decomposition}
\end{flalign}

\medskip
\noindent
\textit{Step 1.}
Let $0\leq k<l\leq N-1$. Subtracting the equations for
$Y^\pi$ and $\mathcal Y^\pi$ on $[t_l,t_{l+1}]$ gives
\begin{flalign*}
    \delta^\pi Y(t_k,t_l)
    &=
    \delta^\pi Y(t_k,t_{l+1})
    +
    \int_{t_l}^{t_{l+1}}
    R^\pi_{k,l}(s)\,ds
    -
    \int_{t_l}^{t_{l+1}}
    \delta^\pi Z(t_k,s)\,dB(s),
\end{flalign*}
where
\begin{flalign*}
    R^\pi_{k,l}(s)
    &\coloneqq
    G_k^\pi
    \big(
    s,Y^\pi(t_l,s),Z^\pi(t_k,s)
    \big)
    -
    G_k^\pi
    \big(
    t_{l+1},
    \mathcal Y^\pi(t_l,t_{l+1}),
    \scalar{\mathcal Z^\pi}_{k,l+1}
    \big).
\end{flalign*}
Hence, by Itô's isometry,
\begin{flalign*}
    I_{k,l}
    &=
    \mathbb E
    \bigg|
    \delta^\pi Y(t_k,t_{l+1})
    +
    \int_{t_l}^{t_{l+1}}
    R^\pi_{k,l}(s)\,ds
    \bigg|^2.
\end{flalign*}
For $l\leq N-2$, Young's inequality gives, for every $\varepsilon>0$,
\begin{flalign*}
    I_{k,l}
    &\leq
    \bigg(
    1+\frac{\Delta_l}{\varepsilon}
    \bigg)y_{k,l+1}
    +
    (\Delta_l+\varepsilon)
    \mathbb E
    \int_{t_l}^{t_{l+1}}
    |R^\pi_{k,l}(s)|^2\,ds.
\end{flalign*}
Define
\begin{flalign*}
    R^{0,\pi}_{k,l}(s)
    &\coloneqq
    G_k^\pi
    \big(
    s,Y^\pi(t_l,s),Z^\pi(t_k,s)
    \big)
    -
    G_k^\pi
    \big(
    t_{l+1},Y^\pi(t_l,s),Z^\pi(t_k,s)
    \big).
\end{flalign*}
Using the Lipschitz property, inserting
$\scalar{Z^\pi}_{k,l+1}$, and applying conditional Jensen's inequality
and the mesh-ratio condition, we obtain
\begin{flalign*}
    \mathbb E
    \int_{t_l}^{t_{l+1}}
    |R^\pi_{k,l}(s)|^2\,ds
    &\leq
    C\Delta_l y_{l,l+1}
    +
    C_1z_{k,l+1}
    +
    \Gamma_{k,l},
\end{flalign*}
where
\begin{flalign*}
    \Gamma_{k,l}
    &\coloneqq
    C
    \mathbb E
    \int_{t_l}^{t_{l+1}}
    |R^{0,\pi}_{k,l}(s)|^2\,ds
    +
    C
    \mathbb E
    \int_{t_l}^{t_{l+1}}
    |Y^\pi(t_l,s)-Y^\pi(t_l,t_{l+1})|^2\,ds
    +
    C
    \mathbb E
    \int_{t_l}^{t_{l+1}}
    \big|
    Z^\pi(t_k,s)-\scalar{Z^\pi}_{k,l+1}
    \big|^2\,ds.
\end{flalign*}
Choosing $\varepsilon=(4C_1)^{-1}$, using
$y_{k,l+1}=I_{k,l+1}-z_{k,l+1}$, and taking $|\pi|$ sufficiently small,
we obtain
\begin{flalign}
    I_{k,l}
    +
    \frac12 z_{k,l+1}
    &\leq
    (1+C\Delta_l)I_{k,l+1}
    +
    C\Delta_l I_{l,l+1}
    +
    \Gamma_{k,l}.
    \label{eq:euler-local-recursion}
\end{flalign}
On the last cell, $\delta^\pi Y(t_k,t_N)=0$ and
$\scalar{\mathcal Z^\pi}_{k,N}=0$. The same argument yields
\begin{flalign}
    I_{k,N-1}
    &\leq
    \Gamma_{k,N-1},
    \qquad
    0\leq k\leq N-2,
    \label{eq:euler-terminal-recursion}
\end{flalign}
where
\begin{flalign*}
    \Gamma_{k,N-1}
    \coloneqq
    C\Delta_{N-1}
    \mathbb E
    \int_{t_{N-1}}^{t_N}
    |R^{0,\pi}_{k,N-1}(s)|^2\,ds
    +
    C\Delta_{N-1}
    \mathbb E
    \int_{t_{N-1}}^{t_N}
    |Y^\pi(t_{N-1},s)-Y^\pi(t_{N-1},t_N)|^2\,ds
    +
    C\Delta_{N-1}
    \mathbb E
    \int_{t_{N-1}}^{t_N}
    |Z^\pi(t_k,s)|^2\,ds.
\end{flalign*}

\medskip
\noindent
\textit{Step 2.}
For $0\leq k\leq N-2$, set
\begin{gather*}
    H_k
    \coloneqq
    \max_{l\in\{k+1,\dots,N-1\}}
    I_{k,l}.
\end{gather*}
Dropping the non-negative $z$-term in
\eqref{eq:euler-local-recursion}, applying the discrete Gronwall
inequality in $l$, and using \eqref{eq:euler-terminal-recursion}, we get
\begin{flalign*}
    H_k
    &\leq
    C
    \sum_{l=k+1}^{N-1}
    \Gamma_{k,l}
    +
    C
    \sum_{l=k+1}^{N-2}
    \Delta_l H_l.
\end{flalign*}
A second discrete Gronwall argument, followed by summation in $k$, gives
\begin{flalign}
    \sum_{k=0}^{N-2}
    \Delta_k H_k
    &\leq
    C
    \sum_{k=0}^{N-2}
    \Delta_k
    \sum_{l=k+1}^{N-1}
    \Gamma_{k,l}.
    \label{eq:euler-H-bound}
\end{flalign}

Summing \eqref{eq:euler-local-recursion} in $l$, using
\eqref{eq:euler-terminal-recursion}, and then
\eqref{eq:euler-H-bound}, we similarly obtain
\begin{flalign}
    \sum_{k=0}^{N-2}
    \Delta_k H_k
    +
    \sum_{k=0}^{N-2}
    \Delta_k
    \sum_{l=k+1}^{N-1}
    z_{k,l}
    \leq
    C
    \sum_{k=0}^{N-2}
    \Delta_k
    \sum_{l=k+1}^{N-1}
    \Gamma_{k,l}.
    \label{eq:euler-errors-by-Gamma}
\end{flalign}

It remains to estimate the right-hand side. Assumption
\ref{assumption:continuity-nodal-data} and the a priori estimates for the
BSDE system yield
\begin{gather*}
    \sum_{k=0}^{N-2}
    \Delta_k
    \sum_{l=k+1}^{N-2}
    \mathbb E
    \int_{t_l}^{t_{l+1}}
    |R^{0,\pi}_{k,l}(s)|^2\,ds
    \leq
    C\rho(|\pi|)^2.
\end{gather*}
Moreover, since the $l$th generator vanishes on its diagonal cell,
\begin{gather*}
    Y^\pi(t_l,s)-Y^\pi(t_l,t_{l+1})
    =
    -
    \int_s^{t_{l+1}}
    Z^\pi(t_l,r)\,dB(r),
\end{gather*}
and hence
\begin{flalign*}
    \mathbb E
    \int_{t_l}^{t_{l+1}}
    |Y^\pi(t_l,s)-Y^\pi(t_l,t_{l+1})|^2\,ds
    &\leq
    \Delta_l
    \mathbb E
    \int_{t_l}^{t_{l+1}}
    |Z^\pi(t_l,r)|^2\,dr.
\end{flalign*}
The remaining term in $\Gamma_{k,l}$ is precisely the oscillation term in
$\mathcal O^Z(\pi)$. The last-cell contributions are estimated in the same
way, while the a priori estimate gives
\begin{gather*}
    \sum_{k=0}^{N-2}
    \Delta_k\Delta_{N-1}
    \mathbb E
    \int_{t_{N-1}}^{t_N}
    |Z^\pi(t_k,s)|^2\,ds
    \leq
    C|\pi|.
\end{gather*}
Therefore,
\begin{flalign}
    \sum_{k=0}^{N-2}
    \Delta_k
    \sum_{l=k+1}^{N-1}
    \Gamma_{k,l}
    &\leq
    C\mathcal O^Z(\pi).
    \label{eq:euler-Gamma-bound}
\end{flalign}
Combining \eqref{eq:euler-errors-by-Gamma} and
\eqref{eq:euler-Gamma-bound}, we obtain
\begin{flalign}
    \sum_{k=0}^{N-2}
    \Delta_k H_k
    +
    \sum_{k=0}^{N-2}
    \Delta_k
    \sum_{l=k+1}^{N-1}
    z_{k,l}
    &\leq
    C\mathcal O^Z(\pi).
    \label{eq:euler-offdiagonal-bound}
\end{flalign}

\medskip
\noindent
\textit{Step 3.}
For $s\in[t_k,t_{k+1}]$, both generators vanish and therefore
\begin{gather*}
    \delta^\pi Y(t_k,s)
    =
    \delta^\pi Y(t_k,t_{k+1})
    -
    \int_s^{t_{k+1}}
    \delta^\pi Z(t_k,r)\,dB(r).
\end{gather*}
Conditional expectation and Itô's isometry imply
\begin{gather*}
    \mathbb E
    |\delta^\pi Y(t_k,s)|^2
    \leq
    y_{k,k+1},
    \qquad
    \mathbb E
    \int_{t_k}^{t_{k+1}}
    |\delta^\pi Z(t_k,s)|^2\,ds
    \leq
    y_{k,k+1}.
\end{gather*}
Hence, for $k\leq N-2$,
\begin{flalign*}
    \mathbb E
    \int_{t_k}^{t_{k+1}}
    |\delta^\pi Y(t_k,s)|^2\,ds
    +
    \mathbb E
    \int_{t_k}^{t_{k+1}}
    \int_t^{t_{k+1}}
    |\delta^\pi Z(t_k,s)|^2\,ds\,dt
    \leq
    C\Delta_kH_k.
\end{flalign*}
For $k=N-1$, both errors vanish because the exact and discrete equations
coincide on $[t_{N-1},T]$. Consequently,
\begin{flalign*}
    \mathcal E^Y(\pi)
    +
    \mathbb E
    \int_0^T
    \int_t^{\tau^\star(t)}
    |\delta^\pi Z(\tau(t),s)|^2\,ds\,dt
    &\leq
    C
    \sum_{k=0}^{N-2}
    \Delta_kH_k
    \leq
    C\mathcal O^Z(\pi).
\end{flalign*}
Combining this estimate with
\eqref{eq:EZ-diagonal-offdiagonal-decomposition} and
\eqref{eq:euler-offdiagonal-bound} proves the result.

\bibliographystyle{abbrvnat}
\bibliography{bibliography}

\end{document}